\newtheorem{thm}{Theorem}[section]
\newtheorem{corollary}[thm]{Corollary}
\newtheorem{lemma}[thm]{Lemma}
\newtheorem{prop}[thm]{Proposition}
\theoremstyle{definition}
\newtheorem{definition}[thm]{Definition}
\newtheorem{example}[thm]{Example}
\theoremstyle{remark}
\numberwithin{equation}{section}
\newcommand{\abs}[1]{\left\vert#1\right\vert}
\newcommand{\N}{\mathbb{{N}}}
\begin{document}

\title[Vector-valued fractional differential-difference equations]
{On well-posedness of vector-valued fractional differential-difference equations}


\author[Abadias]{Luciano Abadias}
\address{Departamento de Matem\'aticas, Instituto Universitario de Matem\'aticas y Aplicaciones, Universidad de Zaragoza, 50009 Zaragoza, Spain.}
\email{labadias@unizar.es}

\author[Lizama]{Carlos Lizama}
\address{Universidad de Santiago de Chile, Facultad de Ciencias, Departamento de Matem\'atica y Ciencia de la Computaci\'on, Casilla
 307, Correo 2, Santiago, CHILE} \email{carlos.lizama@usach.cl}

\author[Miana]{Pedro J. Miana}
\address{Departamento de Matem\'aticas, Instituto Universitario de Matem\'aticas y Aplicaciones, Universidad de Zaragoza, 50009 Zaragoza, Spain.}
\email{pjmiana@unizar.es}

\author[Velasco]{M. Pilar Velasco}
\address{\'Area de Matem\'aticas, Estad\'istica e Investigaci\'on Operativa, Centro Universitario de la Defensa, Instituto Universitario de Matem\'aticas y Aplicaciones, 50090 Zaragoza, Spain.}
\email{velascom@unizar.es}
\thanks{M.P. Velasco has been partially supported by Universidad de Santiago de Chile, Proyecto
POSTDOC-DICYT, Codigo  041533LY, Vicerrector\'ia de Investigaci\'on,
Desarrollo e Innovaci\'on.}

\thanks{C. Lizama has been partially supported by DICYT, Universidad de Santiago de Chile; Project
CONICYT-PIA ACT1112 Stochastic Analysis Research Network; FONDECYT 1140258 and Ministerio de Educaci\'{o}n CEI Iberus (Spain). L. Abadias and P. J. Miana  have been partially supported by Project MTM2013-42105-P, DGI-FEDER, of the MCYTS; Project E-64, D.G. Arag\'on, and  UZCUD2014-CIE-09, Universidad de Zaragoza, Spain.}

\subjclass[2010]{35R11; 65Q10; 47D06}

\keywords{Difference equations; fractional and nonlinear PDE; Poisson distribution; weighted Lebesgue space}


\begin{abstract}
We develop an operator-theoretical method  for the analysis on well posedness of partial differential equations that can be modeled in the form
\begin{equation*}
(*) \left\{%
\begin{array}{rll}
   \Delta^{\alpha} u(n) &=  Au(n+2) + f(n,u(n)),  \quad n \in \mathbb{N}_0, \,\, 1< \alpha \leq 2;  \\
    u(0) &= u_0;\\
    u(1) &= u_1,
\end{array}%
\right.
\end{equation*}
where $A$ is an closed linear operator defined on a Banach space $X$. Our ideas are inspired on the Poisson distribution as a tool to sampling fractional  differential operators into fractional differences. Using our abstract approach, we are able to show existence and uniqueness of solutions for the problem (*) on a distinguished class of weighted Lebesgue spaces of sequences, under mild  conditions on strongly continuous sequences of bounded operators generated by $A,$ and natural restrictions on the nonlinearity $f$. Finally we present some original examples to illustrate our results. \end{abstract}

\maketitle

\section{Introduction}


In an interesting  paper  published in 1943, H. Bateman \cite{Ba43}  studied the differential-difference equation
\begin{equation}
\begin{array}{lcl}\label{eq:bateman}
M  u_{xx}(n,x)+ 2K u_x(n,x) + S u(n,x)   & = & (na+a+b)[u(n+1,x)-u(n,x)] \\ & - & (na+c)[u(n,x)-u(n-1,x)],
\end{array}
\end{equation}
where $M,K,S, a,b,c $ are positive constants. This general formulation includes previous work of B. Taylor, J. Bernoulli and D. Bernoulli. In particular, it includes the temporal discretization of the  diffusion equation
$$
u(n+1)-u(n,x)= u_{xx}(n,x),
$$
and the  wave equation
$$
u(n+1,x)-2u(n,x)+u(n-1,x) = u_{xx}(n,x).
$$
Defining $v(n)(x):= u(n,x)$ and $A=\displaystyle\frac{\partial^2}{\partial x^2}$ the diffusion and wave equations can be rewritten as
$$
\Delta v(n)= Av(n), \qquad 
\hbox{and}\qquad 
\Delta^2 v(n)= Av(n+1),\qquad n\in \N,
$$
respectively, where $A$ is the generator of a $C_0$-semigroup of operators given by
$$
T(t)f(x)= \frac{1}{\sqrt{4\pi t}} \int_{-\infty}^{\infty} e^{\frac{-(x-s)^2}{4t}}f(s)ds.
$$
The difference-differential equation
\begin{equation}\label{eq-a}
 \quad \lambda p(n+1,x) - \lambda p(n,x)= -p_x(n+1,x)
\end{equation}
is even covered by \eqref{eq:bateman}. With initial value $p(0,x)= e^{-\lambda x}$ and border condition $p(n,0)=0, \,\, n=1,2,...$
it defines the  probability distribution function of the Poisson process given by
$$
p(n,x) = \frac{\lambda^n x^n}{n!} e^{-\lambda x}, \quad n \in \mathbb{N}.
$$
Note that equation $\eqref{eq-a}$ is likewise used in the so called car-following problems, see for example \cite{Ch-He-Mo}.

Defining $u(n)(x):= \lambda p(n,x)$ and $A=-\frac{\partial}{\partial x}$  equation $\eqref{eq-a}$ can be rewritten in abstract form as
$$
\Delta u(n) = Au(n+1), \quad n \in \mathbb{N}_0.
$$
Note that $A$ is the generator of the translation semigroup $T(t)f(x)=f(x -t).$

The difference-differential equation
\begin{equation}\label{eq-b}
 \quad T(n+2,x)=2xT(n+1)-T(n,x), \quad n \in \mathbb{N}_0,
\end{equation}
with appropriate initial values define  Chebyshev polynomials of the first and second kind. Indeed, take $T(0,x)=1$ and $T(1,x)=x$ in the first case and
$T(0,x)=1$and $T(1,x)=2x$ in the second case.

Defining $u(n)(x):= T(n,x),$ equation \eqref{eq-b} can be rewritten in abstract form as
$$
\Delta^2 u(n) = Au(n+1), \quad n \in \mathbb{N}_0,
$$
where $Af(x)=2(x-1)f(x).$  We observe that  $A$ is the generator of the multiplication semigroup: $T(t)f(x)= e^{2(x-1)}f(x).$

These abstract models, with unbounded operators $A$ defined on Banach spaces, are closely connected with numerical methods for partial differential equations, integro-differential equations \cite{Cu-Lu-Pa06, Sanz88} and evolution equations \cite{Lu-Sl-Th96}, \cite{Mi93}. See also the recent monograph \cite{ACL14}.  Recently, it has been shown that the extension of some of this models to  fractional difference equations is  a promising tool for several biological and physical applications where
a memory effect appears \cite{AS2010, Ba-Di-Sc12}.

In spite of the significant increase of research in this area,
there are still many significative questions regarding fractional difference equations. In particular, the study  of  fractional difference equations with closed linear operators and their qualitative behaviour remains an open problem.

In this paper, we propose a novel method to deal with this classes of abstract fractional difference problems. This method is inspired on sampling by means of the Poisson distribution.  We will use it to develop a theory on  well-posedness for the abstract fractional difference problem
\begin{equation}\label{eq1}
\left\{%
\begin{array}{rll}
    \Delta^{\alpha} u(n) &=  Au(n+2)+f(n,u(n)),  \quad n \in \mathbb{N}_0, \quad 1 < \alpha\le2;  \\
    u(0) &= u_0 \in X; \\ u(1) & = u_1 \in X,
\end{array}%
\right.
\end{equation}
where $A:D(A)\subset X \to X$ is a closed linear operator defined on a Banach space $X$ and $f$ a suitable function.


We notice that first studies on the model \eqref{eq1} when $A$ is a complex or real valued matrix, have only recently appeared \cite{AE2011, Da-Ba-Ka14}. However,  the study of this equation when $A$ is a closed linear operator, not necessarily bounded, has not been considered in the literature.

The approach followed here is purely operator-theoretic and has as main ingredient the use of the Poisson distribution
$$
p_n(t)= e^{-t} \frac{t^n}{n!}, \quad n \in \mathbb{N}_0, \quad t \geq 0.
$$
Our method relies in to take advantage of the properties of this distribution  when it is applied to discrete phenomena. More precisely, given a continuous evolution $(u(t))_{t \in [0, \infty)}$ we can discretize it by means of that we will call the {\it Poisson transformation}
\begin{equation}\label{eq1.2}
(\mathcal{P}u)(n):= \int_0^{\infty} p_n(t)u(t)dt, \quad n \in \mathbb{N}_0.
\end{equation}
In this paper, we will show that when this procedure is applied to continuous fractional processes, these transformations are well behaved and fit perfectly in the discrete fractional concepts. A remarkable feature of this work will be to show that by this method of sampling  we recover the concept of fractional nabla sum and difference operator in \cite{AE2009b}, which has been used recently and independently of the method used here by other authors in order to obtain  qualitative properties of fractional difference equations, notably concerning stability properties \cite{CeKiNe12, CeKiNe13}.

The outline of this paper is as follows: In Section 2, we give some  background on the definitions to be used. The remarkable fact here is that we highlight a particular choice of the definition introduced  in \cite{AtEl11} for the nabla operator; see the recent paper \cite{Li15}. This choice, that has been implicitly used by other authors \cite{CeKiNe12, CeKiNe13}, is proved to be the right notion in the sense that
\begin{equation}\label{eq2}
\mathcal{P}(D^{\alpha}_t u)(n+m) = \Delta^{\alpha} \mathcal{P}(u)(n),\quad n \in \mathbb{N}_0,
\end{equation}
where $D^{\alpha}_{t}$ denotes the Riemann-Liouville fractional derivative on $\mathbb{R}_+$ of order $\alpha >0, \,$  $m-1<\alpha \leq m$ and
$$
\Delta^{\alpha}v(n)= \Delta^m \Bigg(  \sum_{j=0}^n k^{m-\alpha}(n-j)v(j) \Bigg),
$$
where,
$$
k^{\beta}(n):= \frac{\Gamma(\beta  +n)}{\Gamma(\beta)
\Gamma(n+1)} = \int_0^{\infty} p_n(t)\frac{t^{\beta-1}}{\Gamma(\beta)}dt, \quad n \in \mathbb{N}_0, \quad \beta>0.
$$
See Definition \ref{def2.6} and Theorem \ref{th3.4} below. Then, we can show an interesting connection between the Delta operator of order $\alpha$ (i.e. the Riemann-Liouville-like fractional difference) in the right hand side of \eqref{eq2}  and the Caputo-like fractional difference by means of the identity (Theorem \ref{th4.1}):
$$
_C\Delta^{\alpha}u(n)= \Delta^{\alpha}u(n) - k^{2-\alpha}(n+1)[u(1)-2u(0)] - k^{2-\alpha}(n+2)u(0),  \quad n \in \mathbb{N}_0.
$$
In Section 3, we  use successfully the preceding definitions and properties to solve the problem \eqref{eq1}, firstly, in the homogeneous linear case. In order to do that, we construct a distinguished sequence of bounded  and linear operators $\{\mathcal{S}_{\alpha}(n)\}_{n\in \N_0}$ that solves the homogeneous linear initial value problem
\begin{equation*}
\left\{%
\begin{array}{rll}
    \Delta^{\alpha} u(n) &=  Au(n+2),  \quad n \in \mathbb{N}_0, \quad 1 < \alpha\le2;  \\
    u(0) &= u_0 \in D(A); \\ u(1) & = u_1 \in D(A).
\end{array}%
\right.
\end{equation*}
See Theorem \ref{th5.6}. In particular, when the operator $A$ is bounded, we derive an explicit representation of the solution (Proposition \ref{th4.3}).

From a different point of view, this   representation can be considered as the discrete counterpart of the  Mittag-Leffler function $t^{\alpha-1}E_{\alpha,\alpha}(At^{\alpha})$ (when $A$ is a complex number) which interpolates between the exponential and hyperbolic sine function for $1<\alpha<2.$

 In Section 4 we study the fully nonlinear problem \eqref{eq1}. After to introduce the notion of solution, which is motivated by the representation of the solution in the non-homogeneous linear case (Corollary \ref{cor4.5}), we consider a distinguished  class of vector-valued  spaces of  weighted sequences, that behaves like
$$
l^{\infty}_w(\mathbb{N}_0;X):= \left\{\xi:\mathbb{N}_0\to X \quad / \quad \sup_{n\in\mathbb{N}_0}\frac{\|\xi(n)\|}{nn!}<\infty \right\}.
$$ This vector-valued Banach spaces of sequences will play a central role in the development of this section. The main ingredient for the success of our analysis is the observation that the special weight $w(n)=nn!,$ that represents the factorial representation of a positive integer, proves to be suitable to find existence of solutions for \eqref{eq1} in the above defined space $l^{\infty}_w(\mathbb{N}_0;X)$ under the hypothesis of only boundedness of the sequence of operators $\mathcal{S}_{\alpha}(n).$  We give two positive results in this direction. See Theorem \ref{thlip} and Theorem  \ref{th5.3}.
In Section 5,  we prove several relations between the continuous and discrete setting,  including the notable identity  \eqref{eq2}. See Theorem \ref{th3.4} below. This relations are obtained in the context of the Poisson transformation \eqref{eq1.2} whose main properties are established in Theorem \ref{th3.2}. Note that the idea of discretization of the fractional derivative in time was employed  in  the paper \cite{Cu07} (see also \cite{Cu-Lu-Pa06} and references therein). Section 6 is devoted to the construction of sequences of operators $\{{\mathcal S}_{\alpha}(n)\}_{n\in \N_0}$ via subordination by the Poisson transformation of $\alpha$-resolvent families generated by $A$ (Theorem \ref{th5.66}). A remarkable consequence is Theorem \ref{cor5.4}, which proves existence of solution for the nonlinear problem \eqref{eq1} in the space $l^{\infty}_w(\mathbb{N}_0;X)$ under the hypothesis that $A$ is the generator of a bounded sine family such that the resolvent operator $(\lambda -A)^{-1}$ is a compact operator for some $\lambda$ large enough.

 Finally,  Section 7 provide us with several examples and applications of our general theorems, notably concerning the cases where either $A$ is a multiplication operator or the second order partial differential operator $\frac{\partial^2}{\partial x^2}.$ We also pay special attention to the case $\alpha=2$ and to some related problems formatted in a slighty different way than \eqref{eq1}. \\

\noindent{\bf Notation} We denote by
$
\mathbb{N}_0:=\{ 0, 1, 2, ...\},
$ the set of non-negative integer numbers and  $X$  a
complex Banach space. We denote by $s( \mathbb{N}_0; X)$ the
vectorial space consisting of all vector-valued sequences $u:\mathbb{N}_0 \to
X.$ We recall that the {\it $Z$-transform} of a  vector-valued sequence $f\in
s(\mathbb{N}_0;X)$,  is defined by
\begin{equation*}
\widetilde u(z) := \sum_{j=0}^{\infty} z^{-j} u(j)
\end{equation*}
where $z$ is a complex number. Note that convergence of the series is given for $|z|>R$ with $R$ sufficiently large.

Recall that the finite convolution $*$ of two sequences $u\in s( \mathbb{N}_0; \mathbb{C})$ and $v\in s( \mathbb{N}_0; X)$ is defined by
\begin{equation*}
(u*v)(n) := \sum_{j=0}^n u(n-j)v(j), \quad n \in \mathbb{N}_0.
\end{equation*}
It is well known that \begin{equation}\label{Ztrans}\widetilde{(u*v)}(z)=\widetilde{u}(z)\widetilde{v}(z),\quad |z|>\max\{R_1,R_2\},\end{equation} where $R_1$ and $R_2$ are the radius of convergence of the $Z$-transforms of $u$ and $v$ respectively. The Banach space $\ell^1(X)$ is the subset of $s( \mathbb{N}_0; X)$ such that
$\Vert u\Vert_1:=\sum_{n=0}^\infty \Vert u(n)\Vert<\infty; $ and the Lebesgue space $L^1(\mathbb{R}_+; X)$ is formed by measurable functions $f:\mathbb{R}_+\to X$ such that
$$
\Vert f\Vert_1:= \int_0^\infty \Vert f(t)\Vert dt\,<\infty.
$$
The usual Laplace transform is given by
$$
\hat f(\lambda):= \int_0^{\infty} e^{-\lambda t} f(t)dt,\qquad  \Re \lambda >0, \quad f\in L^1(\mathbb{R}_+; X).
$$

In the case $X= \mathbb{C}$, the Banach space $L^1(\mathbb{R}_+)$ is, in fact, a Banach algebra with the usual convolution product $\ast$ given by
$$
f\ast g(t):=\int_0^t f(t-s)g(s)ds, \qquad t\ge 0, \quad f, g\in L^1(\mathbb{R}_+).
$$
The same holds in the case of $(\ell^1, \ast)$. The Banach space $ C^{(m)}(\mathbb{R}_+;X)$ is formed for continuous functions which have $m$-continuous derivatives defined on $\mathbb{R}_+$ with $m\in \mathbb{N}_0$.

Let $S: \mathbb{R}_+ \to \mathcal{B}(X)$ be strongly continuous, that is, for all $x \in X$ the map $ t\to S(t)x$ is continuous on $\mathbb{R}_+.$ We say that a family of bounded and linear operators $\{S(t)\}_{t\geq 0}$ is exponentially bounded if there exist real numbers $M >0$ and $\omega \in \mathbb{R}$ such that
$$
\| S(t) \| \leq Me^{\omega t}, \quad t \geq 0.
$$
We say that $\{S(t)\}_{t\geq 0}$ is bounded if $\omega =0.$    Note that if $\{S(t)\}_{t\geq 0}$ is exponentially bounded then the Laplace transform
$
\hat S(\lambda)x
$
exists for all $\Re(\lambda) >\omega.$

\section{Fractional difference operators}\label{sect2}

The forward Euler operator $\Delta :s( \mathbb{N}_0; X) \to s( \mathbb{N}_0; X)$ is defined by
$$
\Delta u(n):=u(n+1)-u(n), \quad n \in \mathbb{N}_0.
$$
For $m \in \mathbb{N}$, we define recursively $\Delta^m :s(
\mathbb{N}_0; X) \to s( \mathbb{N}_0; X)$ by $\Delta^1= \Delta$ and
 $$\Delta^m:=\Delta^{m-1}\circ \Delta. $$ The operator $\Delta^m$ is called the $m$-th order forward
 difference operator and
\begin{equation}\label{msuma}
\Delta^m u(n)=\sum_{j=0}^m\binom{m}{j}(-1)^{m-j}u(n+ j), \quad n
\in \mathbb{N}_0,
\end{equation}
for  $u \in s(
 \mathbb{N}_0;X)$. We also denote by $\Delta^0 = I,$ where $I$  is the identity
operator.

We define
\begin{equation}\label{kerne}
k^{\alpha}(n) := \frac{\Gamma(\alpha +n)}{\Gamma(\alpha)
\Gamma(n+1)},  \quad n \in \mathbb{N}_0, \quad \alpha >0.
\end{equation}
This sequence has appeared in \cite{AL} and \cite{Li15} in connection with fractional difference operators. The semigroup property $k^\alpha \ast k^\beta= k^{\alpha+\beta}$  and the generating formula
\begin{equation}\label{genere}
\sum_{n=0}^\infty k^\alpha(n)z^n={1\over (1-z)^\alpha}, \qquad \vert z\vert <1,
\end{equation}
hold for $\alpha, \beta>0$, see for example \cite[Vol. I, p.77]{Zygmund}.

The following definition of fractional sum (also called Ces\`{a}ro sum in \cite{Zygmund}) has appeared recently in some papers, see for example \cite{AL, Li15}. It has proven to be useful in the treatment of fractional difference equations. Note that this definition is implicitly included in e.g. \cite{Ab-At12, AE2009b, MR1989}.

\begin{definition}\cite[Definition 2.5]{Li15} \label{def2.3}
Let $\alpha >0.$ The $\alpha$-th  fractional sum of a sequence $u: \mathbb{N}_0 \to X$ is defined as follows
\begin{equation}\label{eq2.3b}
\Delta^{-\alpha}u(n): = \sum_{j=0}^n k^{\alpha}(n-j)u(j)=(k^\alpha\ast u)(n), \quad n \in \mathbb{N}_0.
\end{equation}
\end{definition}
One of the reasons to choose this  operator in this paper is because their flexibility to be handled by means of $Z$-transform methods. Moreover, it has a better behavior for mathematical analysis  when we ask, for example, for definitions of fractional  sums and differences on subspaces of $s(\mathbb{N}_0;X)$ like e.g. $l_p$ spaces. We notice that, recently, this approach by means of the $Z$-transform has been followed by other authors, see \cite{CeKiNe12, CeKiNe13}.

The next concept is analogous to the
definition of a fractional derivative in the sense of
Riemann-Liouville, see \cite{AE2007, MR1989}. In other words,  to a given vector-valued sequence,
first fractional summation and then integer difference are applied.

\begin{definition}\cite[Definition 2.7]{Li15} \label{def2.6} Let $\alpha\in \mathbb{R}^+\backslash \mathbb{N}_0$. The fractional difference operator of order $\alpha$ in the sense of Riemann-Liouville, $\Delta^{\alpha}: s(\mathbb{N}_0;X) \to s(\mathbb{N}_0;X),$  is defined by
\begin{equation*}
\Delta^{\alpha} u(n) := \Delta^m ( \Delta^{-(m-\alpha)} u)(n), \quad n \in \mathbb{N}_0,
\end{equation*}
where $m-1 < \alpha < m.$
\end{definition}

\begin{example}\label{coro3.6} Let $1<\beta$. Then
$$
\Delta k^\beta(n)=\frac{\Gamma(\beta +n+1)}{\Gamma(\beta)
\Gamma(n+2)}-\frac{\Gamma(\beta +n)}{\Gamma(\beta)
\Gamma(n+1)}= {(\beta-1)\Gamma(\beta+n)\over \Gamma(\beta)\Gamma(n+2)}=k^{\beta-1}(n+1), \qquad   n \in \mathbb{N}_0.
$$
We iterate $m$-times  with $m\in \mathbb{N}$ to get for $\beta >m$ that
$$\Delta^m k^\beta(n)=k^{\beta-m}(n+m),  \qquad   n \in \mathbb{N}_0.
$$
Let $0<\alpha <\beta $ and $m-1<\alpha<m$ for $m\in \mathbb{N}$. By Definition \ref{def2.6}  and \eqref{def2.3}, we get that
$$
\Delta^{\alpha} k^{\beta}(n)= \Delta^m ( \Delta^{-(m-\alpha)}  k^{\beta})(n)=\Delta^m ( k^{m-\alpha}  \ast k^{\beta})(n)=\Delta^m ( k^{m-\alpha+\beta})=k^{\beta-\alpha}(n+m)  ,
$$
for $  n \in \mathbb{N}_0.$ This equality extends \cite[Corollary 3.6]{Li14} given for $0<\alpha<1.$ \end{example}

Interchanging the order of the operators in the definition of
fractional difference in the sense of Riemann-Liouville, and in
analogous way as above, we can introduce the notion of fractional
difference in the sense of Caputo as follows.

\begin{definition}\cite[Definition 2.8]{Li15}
Let $\alpha\in \mathbb{R}^+\backslash \mathbb{N}_0$. The  fractional difference operator of order $\alpha$ in the sense of Caputo,  $_C\Delta^{\alpha}: s(\mathbb{N}_0;X) \to s(\mathbb{N}_0;X),$ is defined by
\begin{equation*}
_C\Delta^{\alpha} u(n) :=
\Delta^{-(m-\alpha)}(\Delta^m u)(n), \quad n \in
\mathbb{N}_{0},
\end{equation*}
where $m-1< \alpha < m$.
\end{definition}

For further use, we note the following relation between the Caputo and Riemann-Liouville fractional differences of order $1<\alpha<2.$ The connection between the Caputo and Riemann-Liouville fractional differences of order $0<\alpha<1$ is given in \cite[Theorem 2.4]{Li14}.

\begin{thm}\label{th4.1}
For each $1< \alpha < 2$ and $u \in s(\mathbb{N}_0;X)$, we have
$$
_C\Delta^{\alpha}u(n)= \Delta^{\alpha}u(n) -  k^{2-\alpha}(n+1)[u(1)-2u(0)] -  k^{2-\alpha}(n+2)u(0), \quad n \in \mathbb{N}_0.
$$
\end{thm}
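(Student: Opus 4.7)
My plan is to prove the identity by direct computation, using the definitions and matching the two expressions term by term after an index shift.

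Since $1 < \alpha < 2$, we have $m = 2$ in both definitions, so
\[
_C\Delta^{\alpha}u(n) = (k^{2-\alpha} \ast \Delta^2 u)(n), \qquad \Delta^{\alpha}u(n) = \Delta^2 (k^{2-\alpha} \ast u)(n).
\]
I would first expand the Caputo side using $\Delta^2 u(j) = u(j+2) - 2u(j+1) + u(j)$ and then shift the summation index ($i = j+2$ in the first piece, $i = j+1$ in the second). This yields three sums running over $\{2,\ldots,n+2\}$, $\{1,\ldots,n+1\}$ and $\{0,\ldots,n\}$ respectively. The idea is then to pad each of these sums so that each one runs over the full range needed to reconstruct $v(n+2)$, $v(n+1)$ and $v(n)$ with $v := k^{2-\alpha}\ast u$; the cost of padding is a collection of correction terms involving $u(0)$ and $u(1)$.

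Concretely, after adding $i = 0$ and $i = 1$ to the first sum and $i = 0$ to the second, the three full sums recombine into $v(n+2) - 2v(n+1) + v(n) = \Delta^2 v(n) = \Delta^{\alpha} u(n)$, while the correction terms are
\[
- k^{2-\alpha}(n+2)\,u(0) - k^{2-\alpha}(n+1)\,u(1) + 2 k^{2-\alpha}(n+1)\,u(0),
\]
which is exactly $-k^{2-\alpha}(n+1)[u(1)-2u(0)] - k^{2-\alpha}(n+2)u(0)$. Collecting everything gives the stated identity.

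The calculation is entirely routine: the only point that requires a bit of care is the bookkeeping of the boundary corrections produced when the ranges of the three shifted sums are extended to a common domain, since a miscount there is the natural way to get the wrong coefficients in front of $u(0)$ and $u(1)$. No deep tool is needed; the associativity $k^{m-\alpha}\ast k^{\alpha} = k^m$ recorded in Example \ref{coro3.6} is implicit in the manipulation, but the proof itself is purely combinatorial in the convolution sums. (Alternatively, one could apply the $Z$-transform to both sides, using \eqref{Ztrans} and the identity for the $Z$-transform of $\Delta^2 u$, but the direct argument above avoids having to invert the transform at the end.)
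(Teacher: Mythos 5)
Your proposal is correct and follows exactly the same route as the paper's proof: expand the Caputo difference via the three convolution sums, shift indices, pad each sum to the full range $\{0,\dots,n+2\}$ (resp.\ $\{0,\dots,n+1\}$), and read off the boundary corrections $-k^{2-\alpha}(n+2)u(0)-k^{2-\alpha}(n+1)u(1)+2k^{2-\alpha}(n+1)u(0)$, which recombine into the stated terms. (One small remark: the semigroup property $k^{m-\alpha}\ast k^{\alpha}=k^{m}$ plays no role here, even implicitly --- the argument is purely the bookkeeping of the shifted convolution sums.)
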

\begin{proof}
By definition and \eqref{eq2.3b} we have
$$
\begin{array}{lll}
\Delta^{-(2-\alpha)}(\Delta^2 u)(n) &=& \displaystyle \sum_{j=0}^n k^{2-\alpha}(n-j)\Delta^2 u(j)
= \displaystyle \sum_{j=0}^n k^{2-\alpha}(n-j)u(j+2) \\ &\quad &\qquad- \displaystyle 2\sum_{j=0}^n k^{2-\alpha}(n-j)u(j+1) +\sum_{j=0}^n k^{2-\alpha}(n-j)u(j)\\
&=& \displaystyle \sum_{j=2}^{n+2} k^{2-\alpha}(n+2-j)u(j)-2\sum_{j=1}^{n+1} k^{2-\alpha}(n+1-j)u(j)\\ & \quad &\qquad+ \displaystyle  \sum_{j=0}^n k^{2-\alpha}(n-j)u(j) = \displaystyle \sum_{j=0}^{n+2} k^{2-\alpha}(n+2-j)u(j) \\ &\quad &\qquad- \displaystyle 2\sum_{j=0}^{n+1} k^{2-\alpha}(n+1-j)u(j)+\sum_{j=0}^n k^{2-\alpha}(n-j)u(j) \\ \\
&\quad& \qquad - k^{2-\alpha}(n+2)u(0)-k^{2-\alpha}(n+1)u(1)+2k^{2-\alpha}(n+1)u(0)\\ \\
&=& \displaystyle \Delta^2(\Delta^{-(2-\alpha)} u)(n)- k^{2-\alpha}(n+1)(u(1)-2u(0)) - k^{2-\alpha}(n+2)u(0),
\end{array}
$$
and so we obtain the desired result.
\end{proof}

We also have the following property for the Riemann-Liouville fractional difference of the convolution.

\begin{thm} \label{deltaconv}
Let $1< \alpha \le 2$, $u \in s(\mathbb{N}_0;\mathbb{C})$ and $v \in s(\mathbb{N}_0;X)$. Then, for each $n\in\mathbb{N}_0$ the following identity holds
$$\Delta^\alpha(u*v)(n)=(\Delta^\alpha u*v)(n)+ (u(1)-\alpha u(0))v(n+1)+ u(0)v(n+2).$$
\end{thm}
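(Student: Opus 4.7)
The plan is to reduce the identity to a discrete Leibniz formula for the classical second difference and combine it with the definition $\Delta^{\alpha} = \Delta^{2}\circ\Delta^{-(2-\alpha)}$. First I would exploit the associativity of the finite convolution on $\mathbb{N}_0$: since $\Delta^{-(2-\alpha)}u = k^{2-\alpha}\ast u$, one has
$$
\Delta^{\alpha}(u\ast v)(n)=\Delta^{2}\bigl((k^{2-\alpha}\ast u)\ast v\bigr)(n)=\Delta^{2}(w\ast v)(n),\qquad w:=k^{2-\alpha}\ast u.
$$
This is the key reduction, because now the fractional difference has been absorbed into a classical second-order difference of a convolution.

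Next I would establish the elementary discrete Leibniz identity
$$
\Delta^{2}(w\ast v)(n)=(\Delta^{2}w\ast v)(n)+[w(1)-2w(0)]\,v(n+1)+w(0)\,v(n+2),
$$
valid for any scalar sequence $w$ and vector sequence $v$. The cleanest derivation is to iterate twice the first-order identity $\Delta(w\ast v)(n)=(\Delta w\ast v)(n)+w(0)v(n+1)$, obtained by isolating the $j=n+1$ summand in $(w\ast v)(n+1)$. Alternatively one expands $\Delta^{2}(w\ast v)(n)=(w\ast v)(n+2)-2(w\ast v)(n+1)+(w\ast v)(n)$ and shifts indices to align all three convolution sums with kernel $w(n-j)$, exactly as was done in the proof of Theorem \ref{th4.1} (there the roles now played by $w$ and $v$ were played by $k^{2-\alpha}$ and $u$). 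The residual boundary terms that survive the alignment are precisely those displayed above.

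To finish, I would identify each ingredient. By Definition \ref{def2.6}, $\Delta^{2}w=\Delta^{2}(k^{2-\alpha}\ast u)=\Delta^{\alpha}u$, so the main term becomes $(\Delta^{\alpha}u\ast v)(n)$. From \eqref{kerne} one reads off $k^{2-\alpha}(0)=1$ and $k^{2-\alpha}(1)=2-\alpha$, whence
$$
w(0)=u(0),\qquad w(1)=(2-\alpha)u(0)+u(1),\qquad w(1)-2w(0)=u(1)-\alpha u(0).
$$
Substituting into the Leibniz formula delivers the stated identity. For the endpoint $\alpha=2$ (which is not covered by Definition \ref{def2.6}), $\Delta^{\alpha}=\Delta^{2}$ and the identity collapses to the second-order Leibniz formula displayed above with $w=u$, giving the same expression since then $u(1)-\alpha u(0)=u(1)-2u(0)$.

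The only mildly delicate step is the index-shift bookkeeping in the discrete Leibniz identity; however, this is essentially the same manipulation as in the proof of Theorem \ref{th4.1}, so no new idea is needed. The rest is algebraic identification of $w(0)$, $w(1)$ and $\Delta^{2}w$ in terms of $u$.
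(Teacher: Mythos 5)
Your proposal is correct and follows essentially the same route as the paper: the paper's proof is precisely the computation of $\Delta^{2}\bigl((k^{2-\alpha}\ast u)\ast v\bigr)(n)$ by shifting indices and isolating the boundary terms $(k^{2-\alpha}\ast u)(0)$ and $(k^{2-\alpha}\ast u)(1)$, which is your discrete Leibniz identity with $w=k^{2-\alpha}\ast u$ written out in one block. Your packaging via the named kernel $w$ and the explicit treatment of the endpoint $\alpha=2$ is a slightly cleaner presentation of the identical argument.
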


\begin{proof} For each $n\in\mathbb{N}_0,$
\begin{eqnarray*}
\Delta^\alpha(u*v)(n)
&=& \Delta^{-(2-\alpha)}(u*v)(n+2) -2 \Delta^{-(2-\alpha)}(u*v)(n+1) + \Delta^{-(2-\alpha)}(u*v)(n)\\
&=&  \sum_{j=0}^{n+2}(k^{2-\alpha}*u)(n+2-j)v(j) - 2\sum_{j=0}^{n+1}(k^{2-\alpha}*u)(n+1-j)v(j) \\
&\,&+ \sum_{j=0}^{n}(k^{2-\alpha}*u)(n-j)v(j) \\
&=& \sum_{j=0}^{n}(k^{2-\alpha}*u)(n+2-j)v(j)
-2\sum_{j=0}^{n}(k^{2-\alpha}*u)(n+1-j)v(j) \\ &\,&+ \sum_{j=0}^{n}(k^{2-\alpha}*u)(n-j)v(j)
+ (k^{2-\alpha}*u)(1)v(n+1) \\ &\, & +(k^{2-\alpha}*u)(0)v(n+2)-2(k^{2-\alpha}*u)(0)v(n+1)\\
&=& \sum_{j=0}^{n}\Delta^2(k^{2-\alpha}*u)(n-j)v(j) + (k^{2-\alpha}(0)u(1)+k^{2-\alpha}(1)u(0))v(n+1) \\
&\,&+ (k^{2-\alpha}(0)u(0))v(n+2)-2(k^{2-\alpha}(0)u(0))v(n+1)
\\ &=& \sum_{j=0}^{n}\Delta^\alpha u(n-j)v(j)
+ (u(1)+(2-\alpha)u(0))v(n+1)+u(0)v(n+2) \\ &\,&- 2u(0)v(n+1)\\
&=& (\Delta^\alpha u*v)(n) + (u(1)-\alpha u(0))v(n+1)+ u(0)v(n+2),
\end{eqnarray*}
proving the claim.
\end{proof}
We notice that for $0<\alpha\leq 1$ the above property reads
$$
\Delta^\alpha(u*v)(n)= (\Delta^{\alpha}u*v)(n) + u(0)v(n+1), \quad n \in \mathbb{N}_0.
$$
It has been proved only recently in \cite[Lemma 3.6]{Li15}.
\section{Linear fractional difference equations on Banach spaces}

Let $A$ be a closed linear operator defined on a Banach space $X$. In this section we study the problem
\begin{equation}\label{eq4.1}
\left\{%
\begin{array}{rll}
    \Delta^{\alpha} u(n) &=  Au(n+2) + f(n),  \quad n \in \mathbb{N}_0, \quad 1<\alpha\le2;  \\
    u(0) &= u_0 ;\\ u(1) & =u_1.
\end{array}%
\right.
\end{equation}

We say that a vector valued sequence $u\in s(\mathbb{N}_0;X)$ is a  solution of \eqref{eq4.1} if $u(n) \in D(A)$ for all $n \in \mathbb{N}_0$ and $u$ satisfies \eqref{eq4.1}.

We will use the notion of discrete $\alpha$-resolvent family introduced in \cite[Definition 3.1]{AL} to obtain the solution of the problem (\ref{eq4.1}). Note that the knowledge of the abstract properties of this family of bounded operators provide insights on the qualitative behavior of the solutions of fractional difference equations.

\begin{definition}\label{DefResol} Let $\alpha>0$ and $A$ be a closed linear operator with domain $D(A)$ defined on a Banach space $X.$ An operator-valued sequence $\{{\mathcal S}_{\alpha}(n)\}_{n\in\N_0}\subset \mathcal{B}(X)$ is called a discrete $\alpha$-resolvent family generated by $A$ if it satisfies the following conditions\begin{itemize}
\item[(i)] ${\mathcal S}_{\alpha}(n)Ax=A{\mathcal S}_{\alpha}(n)x$ for $n\in\N_0$ and $x\in D(A);$
\item[(ii)] ${\mathcal S}_{\alpha}(n)x=k^{\alpha}(n)x+A(k^{\alpha}*{\mathcal S}_{\alpha})(n)x,$ for all $n\in\N_0$ and $x\in X.$
\end{itemize}
The family  $\{{\mathcal S}_{\alpha}(n)\}_{n\in\N_0}$ is said bounded if $\Vert{\mathcal S}\Vert_\infty:= \sup_{n\in\N_0 }\Vert{\mathcal S}_{\alpha}(n) \Vert<\infty$.
\end{definition}

An explicit representation of discrete $\alpha$-resolvent family generated by  bounded operators $A$ with $\Vert A\vert <1$  is given in the following proposition.
\begin{prop}\label{th4.3}
Let  $\alpha>0$ and $A\in\mathcal{B}(X)$, with $\|A\|<1$. Then the operator $A$ generates a discrete $\alpha$-resolvent family $\{{\mathcal S}_{\alpha}(n)\}_{n\in\N_0}$ given by
$$
\mathcal{S}_\alpha(n)=\sum_{j=0}^\infty k^{\alpha(j+1)}(n)A^j, \qquad n\in\mathbb{N}_0.
$$
\end{prop}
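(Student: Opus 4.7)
The plan is to verify the three conditions in Definition \ref{DefResol} by manipulating the defining power series directly; the only subtlety is justifying the convergence (and the interchange of summation) in the operator-norm topology.

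First, I would establish absolute convergence of $\sum_{j=0}^\infty k^{\alpha(j+1)}(n)A^j$ in $\mathcal{B}(X)$ for each fixed $n\in\mathbb{N}_0$. Using the explicit formula $k^{\alpha(j+1)}(n)=\frac{\Gamma(\alpha(j+1)+n)}{\Gamma(\alpha(j+1))\Gamma(n+1)}$ together with the asymptotic $\Gamma(x+n)/\Gamma(x)\sim x^{n}$ as $x\to\infty$, one sees that $k^{\alpha(j+1)}(n)$ grows at most polynomially in $j$ (with the polynomial degree $n$ depending only on the fixed parameter $n$). Since $\|A^j\|\le \|A\|^j$ with $\|A\|<1$, the series converges absolutely in $\mathcal{B}(X)$, so $\mathcal{S}_\alpha(n)$ is a well-defined bounded operator.

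Next, property (i) is immediate: each $\mathcal{S}_\alpha(n)$ is a norm-convergent power series in the bounded operator $A$, hence commutes with $A$ on all of $X$ (in particular on $D(A)=X$). For property (ii), I would compute the right-hand side directly. Using the definition and the finite convolution:
\begin{align*}
k^\alpha(n)x+A(k^\alpha\ast\mathcal{S}_\alpha)(n)x
&= k^\alpha(n)x+\sum_{m=0}^{n}k^\alpha(n-m)\sum_{j=0}^\infty k^{\alpha(j+1)}(m)A^{j+1}x \\
&= k^\alpha(n)x+\sum_{j=0}^\infty A^{j+1}x\,(k^\alpha\ast k^{\alpha(j+1)})(n).
\end{align*}
The interchange of the finite sum over $m$ with the infinite sum over $j$ is justified by the absolute convergence established above (uniformly in $m\in\{0,\dots,n\}$). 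Invoking the semigroup identity $k^\alpha\ast k^{\alpha(j+1)}=k^{\alpha(j+2)}$ and re-indexing $j\mapsto j-1$ in the resulting sum, I get
\begin{equation*}
k^\alpha(n)x+\sum_{j=1}^\infty k^{\alpha(j+1)}(n)A^{j}x=\sum_{j=0}^\infty k^{\alpha(j+1)}(n)A^{j}x=\mathcal{S}_\alpha(n)x,
\end{equation*}
where the $j=0$ term $k^{\alpha}(n)x$ is exactly the first summand on the left. This is precisely condition (ii).

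The genuinely substantive step is the convergence/interchange argument, but both of these are controlled uniformly by the geometric factor $\|A\|^j$ together with the polynomial growth of $k^{\alpha(j+1)}(n)$ in $j$, so no deeper input is needed. As a sanity check one can alternatively verify the formula via the $Z$-transform: condition (ii) is equivalent to $\widetilde{\mathcal{S}_\alpha}(z)=(I-A\widetilde{k^\alpha}(z))^{-1}\widetilde{k^\alpha}(z)$, and expanding the Neumann series for large $|z|$ together with $\widetilde{k^\alpha}(z)^{j+1}=\widetilde{k^{\alpha(j+1)}}(z)$ (from \eqref{Ztrans} and the semigroup property) recovers the claimed representation term by term.
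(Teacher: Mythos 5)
Your proposal is correct and follows essentially the same route as the paper: verify Definition \ref{DefResol}(ii) by interchanging the finite convolution with the power series and invoking the semigroup property $k^\alpha\ast k^{\alpha(j+1)}=k^{\alpha(j+2)}$, then reindex. Your convergence justification (polynomial growth of $k^{\alpha(j+1)}(n)$ in $j$ for fixed $n$, via the Gamma-quotient asymptotic, against the geometric factor $\|A\|^j$) is if anything slightly more careful than the paper's appeal to the asymptotics of $k^\beta(n)$ in $n$.
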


\begin{proof}
Since $k^{\alpha}(n)=\frac{n^{\alpha-1}}{\Gamma(\alpha)}(1+O({1\over n})),$ for $  n\in \N,$ (see for example
\cite[Vol. I, (1.18)]{Zygmund}), then the  series is convergent for $\|A\|<1$.
Take $x\in X$ and $n \in \mathbb{N}_0$. Then we get that  \begin{eqnarray*}
A(k^{\alpha}*\mathcal{S}_\alpha)(n)x&=&A\displaystyle\sum_{j=0}^n k^{\alpha}(n-j)\mathcal{S}_\alpha(j)x=A\displaystyle\sum_{j=0}^n k^{\alpha}(n-j)\sum_{i=0}^{\infty}k^{\alpha(i+1)}(j)A^ix \\
&=&\sum_{i=0}^{\infty}A^{i+1}x\displaystyle\sum_{j=0}^n k^{\alpha}(n-j)k^{\alpha(i+1)}(j) =\sum_{i=0}^{\infty}k^{\alpha(i+2)}(n)A^{i+1}x,
\end{eqnarray*}  where we have applied the semigroup property of the kernel $k^{\alpha}.$ Then we obtain $$k^{\alpha}(n)x+A(k^{\alpha}*\mathcal{S}_\alpha)(n)x=\sum_{i=0}^{\infty}k^{\alpha(i+1)}(n)A^{i}x=\mathcal{S}_\alpha(n)x,\qquad n\in \mathbb{N}_0,$$
and we conclude the proof.
\end{proof}


For $\alpha >0$ fixed and each $n \in \mathbb{N}$  the sequence $\{\beta_{\alpha,n}(j)\}_{j=1,...,n}$ was introduced in \cite[Section 3.1]{AL} as follows:

For $n=1$, $$\beta_{\alpha,1}(1)=k^{\alpha}(1)=\alpha.$$

For $n=2,$
\begin{eqnarray*}\beta_{\alpha,2}(1)&=&k^{\alpha}(2)-k^{\alpha}(1)\beta_{\alpha,1}(1)=k^{\alpha}(2)-\left(k^{\alpha}(1)\right)^2,\cr
\beta_{\alpha,2}(2)&=&k^{\alpha}(1)\beta_{\alpha,1}(1)=\left(k^{\alpha}(1)\right)^2=\alpha^2.
\end{eqnarray*}

For $n=3,$ \begin{eqnarray*} \beta_{\alpha,3}(1)&=&k^{\alpha}(3)-k^{\alpha}(2)\beta_{\alpha,1}(1)-k^{\alpha}(1)\beta_{\alpha,2}(1)=k^{\alpha}(3)-2k^{\alpha}(2)k^{\alpha}(1)+(k^\alpha(1))^3,\\
\beta_{\alpha,3}(2)&=&k^{\alpha}(2)\beta_{\alpha,1}(1)+k^{\alpha}(1)\beta_{\alpha,2}(1)-k^{\alpha}(1)\beta_{\alpha,2}(2)=2 k^\alpha(2)k^\alpha(1)-2\left(k^{\alpha}(1)\right)^3,
\\
\beta_{\alpha,3}(3)&=&k^{\alpha}(1)\beta_{\alpha,2}(2)= \left(k^{\alpha}(1)\right)^3=\alpha^3.
\end{eqnarray*}

For $n\geq 4,$
\begin{eqnarray*}
\beta_{\alpha,n}(1)&=&k^{\alpha}(n)-\displaystyle\sum_{j=1}^{n-1}k^{\alpha}(n-j)\beta_{\alpha,j}(1),\\
\beta_{\alpha,n}(l)&=&\displaystyle\sum_{j=l-1}^{n-1}k^{\alpha}(n-j)\beta_{\alpha,j}(l-1)-\displaystyle\sum_{j=l}^{n-1}k^{\alpha}(n-j)\beta_{\alpha,j}(l) \quad \hbox{for} \quad 2\leq l\leq n-1,\\
\beta_{\alpha,n}(n)&=&k^{\alpha}(1)\beta_{\alpha,n-1}(n-1)= \left(k^{\alpha}(1)\right)^n=\alpha^n
\end{eqnarray*}

In case that $A$ is  closed, but not necessarily bounded, the authors in \cite[Theorem 3.2]{AL}  proved that given  $\{{\mathcal S}_\alpha(n)\}_{n\in\N_0}\subset \mathcal{B}(X)$  a discrete $\alpha$-resolvent family generated by $A,$ then $1\in\rho(A)$ and  ${\mathcal S}_\alpha(0)=(I-A)^{-1}$; ${\mathcal S}_\alpha(0)x\in D(A)$ and ${\mathcal S}_\alpha(n)x\in D(A^2)$ for all $n\in\N,$ and $x\in X$;
and \begin{equation*}{\mathcal S}_\alpha(n)x=\displaystyle\sum_{j=1}^{n}\beta_{\alpha,n}(j)(I-A)^{-(j+1)}x,\quad n\in\N
, \quad x\in X.
\end{equation*}
The last equality  provides an explicit representation of discrete $\alpha$-resolvent families  in terms of a bounded linear operators which is, in fact, a characterization of this family of operators as the next theorem shows.

\begin{thm} \label{Caract} Let $\lambda,\alpha >0$, $(A, D(A))$ be a closed operator on the Banach space $X$ and $\{{\mathcal S}_\alpha(n)\}_{n\in\N_0}\subset \mathcal{B}(X)$ be a sequence of bounded operators.
Then the following conditions are equivalent.
\begin{itemize}
\item[(i)]  The family $\{{\mathcal S}_\alpha(n)\}_{n\in\N_0}\subset \mathcal{B}(X)$ is  a discrete $\alpha$-resolvent family generated by $A.$
\item[(ii)] $1\in\rho(A)$, the operator ${\mathcal S}_\alpha(0)=(I-A)^{-1}$
and \begin{equation*}{\mathcal S}_\alpha(n)x=\displaystyle\sum_{j=1}^{n}\beta_{\alpha,n}(j)(I-A)^{-(j+1)}x,\quad n\in\N
, \quad x\in X.
\end{equation*}
\end{itemize}

If there exists $\lambda_{0}>0$ such that $\sup_{n\in {\N_0}}\lambda_{0}^{-n}\Vert {\mathcal S}_\alpha(n)\Vert <\infty,$ both equations are equivalent to
\begin{itemize}
\item[(iii)] $\displaystyle{\left(\lambda-1\over \lambda\right)^\alpha}\in \rho(A)$ and
\begin{equation}\label{3.2}
\left(\left(\lambda-1\over \lambda\right)^\alpha-A\right)^{-1}x=  \sum_{n=0}^\infty \lambda^{-n}{\mathcal S}_\alpha(n)x, \qquad x\in X, |\lambda|>\max\{\lambda_0,1\}.
\end{equation}
\end{itemize}
\end{thm}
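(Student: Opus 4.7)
The equivalence (i) $\Leftrightarrow$ (ii) is essentially contained in \cite[Theorem 3.2]{AL} for the implication (i) $\Rightarrow$ (ii). For the converse, my plan is to take the explicit formula in (ii) as a definition and verify directly the two axioms of Definition \ref{DefResol}: commutation with $A$ on $D(A)$ follows from the commutation of $(I-A)^{-1}$ with $A$ on $D(A)$, while the functional equation ${\mathcal S}_\alpha(n)x = k^\alpha(n)x + A(k^\alpha \ast {\mathcal S}_\alpha)(n)x$ can be checked inductively on $n$ using the recursive definition of the coefficients $\beta_{\alpha,n}(j)$ together with the identity $A(I-A)^{-(j+1)} = (I-A)^{-j}-(I-A)^{-(j+1)}$.

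The heart of the proof is the equivalence with (iii) via the $Z$-transform. The plan for (i) $\Rightarrow$ (iii) is as follows. The growth assumption $\sup_{n\in\N_0}\lambda_0^{-n}\Vert{\mathcal S}_\alpha(n)\Vert<\infty$ ensures that $\widetilde{{\mathcal S}_\alpha}(\lambda)x$ converges absolutely for $|\lambda|>\lambda_0$. From \eqref{genere}, with $z=1/\lambda$, I compute
\begin{equation*}
\widetilde{k^\alpha}(\lambda)=\sum_{n=0}^\infty k^\alpha(n)\lambda^{-n}=\frac{1}{(1-1/\lambda)^\alpha}=\left(\frac{\lambda}{\lambda-1}\right)^\alpha,\qquad |\lambda|>1.
\end{equation*}
Applying the $Z$-transform to the identity (ii) of Definition \ref{DefResol}, using the convolution property \eqref{Ztrans} and the closedness of $A$ (which permits extraction of $A$ from the convergent series), yields
\begin{equation*}
\widetilde{{\mathcal S}_\alpha}(\lambda)x=\left(\frac{\lambda}{\lambda-1}\right)^\alpha x+\left(\frac{\lambda}{\lambda-1}\right)^\alpha A\,\widetilde{{\mathcal S}_\alpha}(\lambda)x,\qquad x\in X,
\end{equation*}
valid for $|\lambda|>\max\{\lambda_0,1\}$. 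Rearranging gives $\bigl[(\tfrac{\lambda-1}{\lambda})^\alpha - A\bigr]\widetilde{{\mathcal S}_\alpha}(\lambda)x=x$; together with the commutation property this shows $(\tfrac{\lambda-1}{\lambda})^\alpha\in\rho(A)$ and establishes \eqref{3.2}.

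For (iii) $\Rightarrow$ (i), I would reverse the argument. From \eqref{3.2} the series $\sum_{n\ge 0}\lambda^{-n}{\mathcal S}_\alpha(n)x$ equals the resolvent applied to $x$, so
\begin{equation*}
\left(\left(\tfrac{\lambda-1}{\lambda}\right)^\alpha - A\right)\widetilde{{\mathcal S}_\alpha}(\lambda)x = x.
\end{equation*}
Multiplying by $\widetilde{k^\alpha}(\lambda)=(\tfrac{\lambda}{\lambda-1})^\alpha$ rearranges this to $\widetilde{{\mathcal S}_\alpha}(\lambda)x=\widetilde{k^\alpha}(\lambda)x + \widetilde{k^\alpha}(\lambda)A\widetilde{{\mathcal S}_\alpha}(\lambda)x$, and by \eqref{Ztrans} the right-hand side is the $Z$-transform of $k^\alpha(n)x+A(k^\alpha\ast{\mathcal S}_\alpha)(n)x$. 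Uniqueness of the $Z$-transform (both series converge on a common half-plane) then delivers the resolvent identity pointwise in $n$. The commutation axiom follows from commutation of the resolvent of $A$ with $A$ on $D(A)$ and uniqueness of $Z$-transform coefficients.

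The main technical obstacle I anticipate is justifying the interchange of the closed operator $A$ with the infinite series defining $\widetilde{{\mathcal S}_\alpha}(\lambda)x$: since $A$ is generally unbounded, I cannot simply pull $A$ out of the sum. The standard device is to use that $k^\alpha\ast{\mathcal S}_\alpha(n)x$ lies in $D(A)$ by the resolvent equation, so $A(k^\alpha\ast{\mathcal S}_\alpha)(n)x={\mathcal S}_\alpha(n)x-k^\alpha(n)x$ is already expressed without $A$; then the closedness of $A$ combined with convergence of the sequence of partial sums legitimately transfers $A$ inside the limit. A secondary subtlety is to ensure the radius of convergence in the equivalence is the same $\max\{\lambda_0,1\}$, which requires $|\lambda|>1$ for $\widetilde{k^\alpha}(\lambda)$ to make sense and $|\lambda|>\lambda_0$ for $\widetilde{{\mathcal S}_\alpha}(\lambda)$.
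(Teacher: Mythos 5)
Your overall architecture coincides with the paper's: (i)$\Rightarrow$(ii) by citing \cite[Theorem 3.2]{AL}, (ii)$\Rightarrow$(i) by direct verification using the recursion for $\beta_{\alpha,n}(j)$, and the equivalence with (iii) via the $Z$-transform together with \eqref{Ztrans}, \eqref{genere} and uniqueness of the $Z$-transform. One cosmetic remark before the main point: the identity you quote should read $A(I-A)^{-(j+1)}=(I-A)^{-(j+1)}-(I-A)^{-j}$, not the opposite sign; the paper's computation of $(I-A)\mathcal{S}_\alpha(n)x$ rests exactly on the telescoping this produces.

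The substantive gap is in (iii)$\Rightarrow$(i). You identify $\widetilde{k^\alpha}(\lambda)\,A\widetilde{\mathcal{S}_\alpha}(\lambda)x$ with the $Z$-transform of $n\mapsto A(k^\alpha*\mathcal{S}_\alpha)(n)x$, which requires pushing the closed operator $A$ through the series $\sum_n\lambda^{-n}\mathcal{S}_\alpha(n)x$ --- and hence requires knowing beforehand that $\mathcal{S}_\alpha(n)x\in D(A)$ and that the partial sums of $A\mathcal{S}_\alpha(n)x$ converge. The ``standard device'' you invoke to justify this interchange uses the identity $A(k^\alpha*\mathcal{S}_\alpha)(n)x=\mathcal{S}_\alpha(n)x-k^\alpha(n)x$, i.e.\ the resolvent equation itself; in this direction that equation is precisely what you are trying to prove, so the argument as written is circular (it is perfectly fine for (i)$\Rightarrow$(iii)). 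The paper avoids this by reordering your steps: it first proves, for $x\in D(A)$ and via uniqueness of the $Z$-transform, that $\mathcal{S}_\alpha(n)$ commutes with the resolvent $\left(\left(\frac{\mu-1}{\mu}\right)^\alpha-A\right)^{-1}$, whence $\mathcal{S}_\alpha(n)D(A)\subset D(A)$ and $A\mathcal{S}_\alpha(n)x=\mathcal{S}_\alpha(n)Ax$; only then does it run the $Z$-transform computation with $x\in D(A)$, writing the second term as $\widetilde{(k^\alpha*\mathcal{S}_\alpha)}(\lambda)Ax$, so that $A$ sits on the fixed vector $x$ and never has to pass through an infinite sum. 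You do list the resolvent-commutation argument at the very end of your sketch, so the missing ingredient is present; it must, however, be promoted to the first step of (iii)$\Rightarrow$(i) and used to replace $A(k^\alpha*\mathcal{S}_\alpha)(n)x$ by $(k^\alpha*\mathcal{S}_\alpha)(n)Ax$ before invoking uniqueness of the $Z$-transform.
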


\begin{proof} The condition (i) implies the condition (ii) is given in \cite[Theorem 3.2]{AL}. Now we suppose that the condition (ii) holds. Then  ${\mathcal S}_\alpha(n)x\in D(A)$ for any $x\in X$ and $n\in {\N_0}.$ For $n\in\N$ and $x\in X$ we have that \begin{eqnarray*}
(I-A)\mathcal{S}_{\alpha}(n)x&=&\displaystyle\sum_{j=1}^{n}\beta_{\alpha,n}(j)(I-A)^{-j}x=(k^{\alpha}(n)-\sum_{i=1}^{n-1}k^{\alpha}(n-i)\beta_{\alpha,i}(1))(I-A)^{-1}x \\
&&+\displaystyle\sum_{j=2}^{n-1}(\displaystyle\sum_{i=j-1}^{n-1}k^{\alpha}(n-i)\beta_{\alpha,i}(j-1)-\displaystyle\sum_{i=j}^{n-1}k^{\alpha}(n-i)\beta_{\alpha,i}(j))(I-A)^{-j}x \\
&&+k^{\alpha}(1)\beta_{\alpha,n-1}(n-1)(I-A)^{-n}x\\
&=&k^{\alpha}(n)(I-A)^{-1}x+\displaystyle\sum_{j=1}^{n-1}\sum_{i=j}^{n-1} k^{\alpha}(n-i) \beta_{\alpha,i}(j)((I-A)^{-(j+1)}-(I-A)^{-j})x.
\end{eqnarray*}
Applying that $(I-A)^{-1}-I=A(I-A)^{-1}$ and $\mathcal{S}_{\alpha}(0)=(I-A)^{-1}$ we get that\begin{eqnarray*}
(I-A)\mathcal{S}_{\alpha}(n)x&=&k^{\alpha}(n)(I+A\mathcal{S}_{\alpha}(0))x +A\displaystyle\sum_{i=1}^{n-1} k^{\alpha}(n-i) \sum_{j=1}^{i} \beta_{\alpha,i}(j)(I-A)^{-(j+1)}x \\
&=&k^{\alpha}(n)(I+A\mathcal{S}_{\alpha}(0))x + A\displaystyle\sum_{i=1}^{n-1} k^{\alpha}(n-i) \mathcal{S}_{\alpha}(i)x,
\end{eqnarray*}
and clearly it follows that $\mathcal{S}_{\alpha}(n)x=k^{\alpha}(n)x+A(k^{\alpha}*{\mathcal S}_{\alpha})(n)x$ for $n\in\N.$ The case $n=0$ is a simple check.

Finally we prove that if there exists $\lambda_{0}>0$ such that $\sup_{n\in {\N_0}}\lambda_{0}^{-n}\Vert {\mathcal S}_\alpha(n)\Vert <\infty,$ (iii) is equivalent to (i). Assume that $\{{\mathcal S}_\alpha(n)\}_{n\in\N_0}\subset \mathcal{B}(X)$ is  a discrete $\alpha$-resolvent family generated by $A,$ then applying $Z$-transform we get for $|\lambda|>\max\{\lambda_0,1\}$ that \begin{eqnarray*}\widetilde{\mathcal{S}_{\alpha}} (\lambda)x &=& \sum_{j=0}^{\infty} \lambda^{-j} \mathcal{S}_{\alpha}(j)x=\widetilde{k^{\alpha}}(\lambda)x+A\widetilde{k^{\alpha}}(\lambda)\widetilde{\mathcal{S}_{\alpha}} (\lambda)x\\
&=&\biggl(\frac{\lambda}{\lambda-1}\biggr)^{\alpha}x+\biggl(\frac{\lambda}{\lambda-1}\biggr)^{\alpha}A\widetilde{\mathcal{S}_{\alpha}} (\lambda)x,\quad x\in X,\end{eqnarray*} and $$\widetilde{\mathcal{S}_{\alpha}} (\lambda)x=\biggl(\frac{\lambda}{\lambda-1}\biggr)^{\alpha}x+\biggl(\frac{\lambda}{\lambda-1}\biggr)^{\alpha}\widetilde{\mathcal{S}_{\alpha}} (\lambda)Ax,\quad x\in D(A),$$ where we have used that \eqref{Ztrans} and \eqref{genere}. Thus the operator $\left(\lambda-1\over \lambda\right)^\alpha-A$ is invertible, and we get \eqref{3.2}. Conversely, let $|\lambda|,|\mu|>\max\{\lambda_0,1\}$ and $x\in D(A),$ then there exists $y\in X$ such that $x=\left(\left(\mu-1\over \mu\right)^\alpha-A\right)^{-1}y.$ Using that $\left(\left(\lambda-1\over \lambda\right)^\alpha-A\right)^{-1}$ and $\left(\left(\mu-1\over \mu\right)^\alpha-A\right)^{-1}$ are bounded operators and commute, and $A$ is closed we have that \begin{eqnarray*}
\widetilde{\mathcal{S}_{\alpha}} (\lambda)x&=&\widetilde{\mathcal{S}_{\alpha}}(\lambda)\left(\left(\mu-1\over \mu\right)^\alpha-A\right)^{-1}y \\
&=&\left(\left(\mu-1\over \mu\right)^\alpha-A\right)^{-1}\left(\left(\lambda-1\over \lambda\right)^\alpha-A\right)^{-1}y \\
&=&\sum_{n=0}^\infty \lambda^{-n}\left(\left(\mu-1\over \mu\right)^\alpha-A\right)^{-1}\mathcal{S}_{\alpha}(n)\left(\left(\mu-1\over \mu\right)^\alpha-A\right)x.
\end{eqnarray*}
The uniqueness of $Z$-transform proves that $$\mathcal{S}_{\alpha}(n)x=\left(\left(\mu-1\over \mu\right)^\alpha-A\right)^{-1}\mathcal{S}_{\alpha}(n)\left(\left(\mu-1\over \mu\right)^\alpha-A\right)x.$$ Then we have $\mathcal{S}_{\alpha}(n)x\in D(A),$ and therefore $A\mathcal{S}_{\alpha}(n)x=\mathcal{S}_{\alpha}(n)Ax$ for all $x\in X.$ Finally note that for $|\lambda|>\max\{\lambda_0,1\}$ and $x\in D(A)$ we have using \eqref{genere} that \begin{eqnarray*}
\widetilde{k^{\alpha}}(\lambda)x&=&\widetilde{k^{\alpha}}(\lambda)\widetilde{\mathcal{S}_{\alpha}}(\lambda)\left(\left(\lambda-1\over \lambda\right)^\alpha-A\right)x \\
&=&\widetilde{\mathcal{S}_{\alpha}}x-\widetilde{(k^{\alpha}*\mathcal{S}_{\alpha})}(\lambda)Ax,
\end{eqnarray*}
and by the uniqueness of $Z$-transform we get the result.
\end{proof}

A beautiful consequence of Theorem  \ref{Caract} is the following result about sums of combinatorial numbers which seems to be new.

\begin{corollary} Take $\alpha >0$, $n\in \mathbb{N}$ and $\{\beta_{\alpha,n}(j)\}_{j=1,...,n}$  defined as above. Then \begin{itemize}
\item[(i)]$\displaystyle\sum_{j=1}^n {\beta_{\alpha, n}(j)\over  (1-\lambda)^{j+1}}=\sum_{l=0}^{\infty}\lambda^l\frac{\Gamma(\alpha(l+1)+n)}{\Gamma(\alpha(l+1))\Gamma(n+1)},\quad \text{for }\vert \lambda\vert <1.$
\item[(ii)]$\displaystyle\sum_{j=1}^n \beta_{\alpha, n}(j)\frac{\Gamma(l+1+j)}{\Gamma(l+1)\Gamma(j+1)}=\frac{\Gamma(\alpha(l+1)+n)}{\Gamma(\alpha(l+1))\Gamma(n+1)},\quad \text{for } l\in\N.$
\end{itemize}
\end{corollary}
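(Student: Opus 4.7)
The plan is to specialize the two different representations of the discrete $\alpha$-resolvent family to the scalar case $X=\mathbb{C}$, $A = \lambda I$ with $|\lambda|<1$, and read off the identities by comparing the resulting power series.

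\medskip
\noindent\textbf{Step 1.} Fix $\alpha>0$ and $\lambda\in\mathbb{C}$ with $|\lambda|<1$, and apply Proposition \ref{th4.3} to the bounded operator $A=\lambda I$ on $X=\mathbb{C}$. Since $\|A\|=|\lambda|<1$, the proposition yields a discrete $\alpha$-resolvent family given by
\[
\mathcal{S}_\alpha(n) \;=\; \sum_{j=0}^{\infty} k^{\alpha(j+1)}(n)\,\lambda^j, \qquad n\in\mathbb{N}_0,
\]
where by definition $k^{\alpha(j+1)}(n)=\dfrac{\Gamma(\alpha(j+1)+n)}{\Gamma(\alpha(j+1))\Gamma(n+1)}$. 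Moreover, the scalar case clearly satisfies the growth bound $\sup_n \lambda_0^{-n}|\mathcal{S}_\alpha(n)|<\infty$ for any $\lambda_0>1$, so Theorem \ref{Caract} applies.

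\medskip
\noindent\textbf{Step 2.} The implication (i)$\Rightarrow$(ii) of Theorem \ref{Caract} provides the alternative representation
\[
\mathcal{S}_\alpha(n) \;=\; \sum_{j=1}^{n} \beta_{\alpha,n}(j)\,(1-\lambda)^{-(j+1)}, \qquad n\in\mathbb{N},
\]
valid because $1\in\rho(\lambda I)$ (as $\lambda\neq 1$) and $(I-\lambda I)^{-1}=(1-\lambda)^{-1}$. Equating the two expressions for $\mathcal{S}_\alpha(n)$ and substituting the explicit form of $k^{\alpha(j+1)}(n)$ gives
\[
\sum_{j=1}^{n}\frac{\beta_{\alpha,n}(j)}{(1-\lambda)^{j+1}}
\;=\; \sum_{l=0}^{\infty} \lambda^{l}\,\frac{\Gamma(\alpha(l+1)+n)}{\Gamma(\alpha(l+1))\Gamma(n+1)},
\]
which is exactly part (i).

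\medskip
\noindent\textbf{Step 3.} For part (ii), I expand the left-hand side of (i) using the binomial (negative integer power) series
\[
\frac{1}{(1-\lambda)^{j+1}} \;=\; \sum_{l=0}^{\infty}\frac{\Gamma(l+j+1)}{\Gamma(l+1)\Gamma(j+1)}\,\lambda^{l}, \qquad |\lambda|<1.
\]
Since the finite sum over $j\in\{1,\dots,n\}$ is harmless and the double series converges absolutely for $|\lambda|<1$, Fubini allows me to interchange the order of summation. Matching coefficients of $\lambda^l$ with the right-hand side of (i), and invoking the uniqueness of the power series expansion, yields
\[
\sum_{j=1}^{n}\beta_{\alpha,n}(j)\,\frac{\Gamma(l+j+1)}{\Gamma(l+1)\Gamma(j+1)}
\;=\;\frac{\Gamma(\alpha(l+1)+n)}{\Gamma(\alpha(l+1))\Gamma(n+1)}, \qquad l\in\mathbb{N}_0,
\]
which is (ii).

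\medskip
There is no real obstacle here: the corollary is essentially the scalar shadow of the operator-theoretic equivalence established in Theorem \ref{Caract} combined with the explicit series formula in Proposition \ref{th4.3}. The only minor care needed is to justify the interchange of the two summations in Step 3, which is immediate from absolute convergence of $\sum_j |\beta_{\alpha,n}(j)|(1-|\lambda|)^{-(j+1)}$ for $|\lambda|<1$.
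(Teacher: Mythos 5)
Your proposal is correct and follows essentially the same route as the paper: both obtain (i) by equating the explicit series representation of $\mathcal{S}_\alpha(n)$ from Proposition \ref{th4.3} (specialized to the scalar case) with the $\beta_{\alpha,n}$-representation from Theorem \ref{Caract}, and both obtain (ii) by expanding $(1-\lambda)^{-(j+1)}$ as a power series and matching coefficients of $\lambda^l$. Your added remarks on absolute convergence and the interchange of summations only make explicit what the paper leaves implicit.
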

\begin{proof}
(i) We take $|\lambda|<1,$ then using Proposition \ref{th4.3} and Theorem \ref{Caract} in the scalar case we have that $$\displaystyle\sum_{j=1}^n {\beta_{\alpha, n}(j)\over  (1-\lambda)^{j+1}}=\sum_{l=0}^\infty{\lambda^l}k^{\alpha(l+1)}(n)=\sum_{l=0}^\infty{\lambda^l}\frac{\Gamma(\alpha(l+1)+n)}{\Gamma(\alpha(l+1))\Gamma(n+1)}, \qquad n\in \N.$$ (ii) Let $|\lambda|<1,$ then $$\displaystyle\sum_{j=1}^n {\beta_{\alpha, n}(j)\over  (1-\lambda)^{j+1}}=\sum_{l=0}^{\infty}\lambda^l\sum_{j=1}^n \beta_{\alpha, n}(j)\frac{\Gamma(l+1+j)}{\Gamma(l+1)\Gamma(j+1)},\qquad n\in \N,$$ where we have applied that $\displaystyle{\frac{1}{(1-\lambda)^{j+1}}=\sum_{l=0}^{\infty}\lambda^l \frac{\Gamma(l+1+j)}{\Gamma(l+1)\Gamma(j+1)}.}$ Then we apply (i) to get the result.
\end{proof}

Our main result in this section is the following theorem.

\begin{thm}\label{th5.6} Suppose that $A$ is the generator of   a discrete $\alpha$-resolvent family $\{{\mathcal S}_\alpha(n)\}_{n\in\N_0}$ on a Banach space $X.$ Then
the fractional difference equation
\begin{equation}\label{eq6.3b}
\Delta^{\alpha} u(n) =  Au(n+2), \quad n \in \mathbb{N}_0, \,\, 1 < \alpha \le2
\end{equation}
with initial conditions $u(0)=u_0 \in D(A)$ and $u(1)=u_1 \in D(A) $ admits the unique solution
\begin{equation*}\label{eq4.3}
u(n) = \mathcal{S}_{\alpha}(n)(I-A)u(0) - \alpha \mathcal{S}_{\alpha}(n-1)u(0) + \mathcal{S}_{\alpha}(n-1)(I-A)u(1), \,\, n \in \mathbb{N}_0.
\end{equation*}
\end{thm}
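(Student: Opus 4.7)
My plan is to solve \eqref{eq6.3b} by applying the $Z$-transform to both sides, invert using the characterization of discrete $\alpha$-resolvent families provided by Theorem \ref{Caract}(iii), and then settle uniqueness using the injectivity of $Z$ (or equivalently the fact that $1\in\rho(A)$ makes \eqref{eq6.3b} a well-defined recursion for $u(n+2)$).

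First I would compute $\widetilde{\Delta^{\alpha} u}(\lambda)$. Writing $\Delta^{\alpha} u = \Delta^2(k^{2-\alpha}\ast u)$ and setting $v := k^{2-\alpha}\ast u$, I note that $v(0)=k^{2-\alpha}(0)u(0)=u(0)$ and $v(1)=u(1)+(2-\alpha)u(0)$. Using \eqref{Ztrans} and \eqref{genere} together with the elementary identity $\widetilde{\Delta^2 w}(\lambda)=(\lambda-1)^2 \widetilde{w}(\lambda)-\lambda(\lambda-2)w(0)-\lambda w(1)$, a short calculation gives
\begin{equation*}
\widetilde{\Delta^{\alpha} u}(\lambda)=\left(\tfrac{\lambda-1}{\lambda}\right)^{\alpha}\lambda^{2}\widetilde{u}(\lambda)-\lambda(\lambda-\alpha)u(0)-\lambda u(1).
\end{equation*}
On the other hand, the standard shift rule yields $\widetilde{A u(\cdot+2)}(\lambda)=A\lambda^{2}\widetilde{u}(\lambda)-A\lambda^{2}u(0)-A\lambda u(1)$. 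Equating the two expressions, dividing by $\lambda^{2}$, and collecting terms produces the algebraic identity
\begin{equation*}
\left[\left(\tfrac{\lambda-1}{\lambda}\right)^{\alpha}-A\right]\widetilde{u}(\lambda)=(I-A)u(0)-\tfrac{\alpha}{\lambda}u(0)+\tfrac{1}{\lambda}(I-A)u(1).
\end{equation*}

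At this point I would invoke Theorem \ref{Caract}(iii), which for $|\lambda|$ sufficiently large identifies the left bracket with $\widetilde{\mathcal{S}_{\alpha}}(\lambda)^{-1}$, giving
\begin{equation*}
\widetilde{u}(\lambda)=\widetilde{\mathcal{S}_{\alpha}}(\lambda)(I-A)u(0)-\alpha\,\lambda^{-1}\widetilde{\mathcal{S}_{\alpha}}(\lambda)u(0)+\lambda^{-1}\widetilde{\mathcal{S}_{\alpha}}(\lambda)(I-A)u(1).
\end{equation*}
Since multiplication by $\lambda^{-1}$ on the $Z$-side corresponds to a unit backward shift (with the convention $\mathcal{S}_{\alpha}(-1):=0$), inverting term by term produces exactly the claimed formula. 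For uniqueness I would observe that the coefficient of $u(n+2)$ in $\Delta^{\alpha}u(n)$ equals $k^{2-\alpha}(0)=1$, so \eqref{eq6.3b} reads $(I-A)u(n+2)=[\text{known from }u(0),\ldots,u(n+1)]$, which, combined with $1\in\rho(A)$ from Theorem \ref{Caract}, recursively and uniquely determines all $u(n)$ in $D(A)$. The only delicate step is the bookkeeping for the initial values in the $\Delta^{\alpha}$ transform; once $v(0)$ and $v(1)$ are computed correctly the cancellation $-\lambda(\lambda-2)-\lambda(2-\alpha)=-\lambda(\lambda-\alpha)$ is what produces the $-\alpha$ factor in the middle term of the solution formula, so that is the one computation I would be most careful with.
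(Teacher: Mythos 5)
Your route is genuinely different from the paper's: you work on the transform side, deriving $\bigl[\bigl(\tfrac{\lambda-1}{\lambda}\bigr)^{\alpha}-A\bigr]\widetilde{u}(\lambda)=(I-A)u(0)-\tfrac{\alpha}{\lambda}u(0)+\tfrac{1}{\lambda}(I-A)u(1)$ and inverting via Theorem \ref{Caract}(iii), whereas the paper stays entirely in the ``time domain'': it convolves the defining identity $\mathcal{S}_{\alpha}(n)x=k^{\alpha}(n)x+A(k^{\alpha}\ast\mathcal{S}_{\alpha})(n)x$ with $k^{2-\alpha}$, uses $k^{2-\alpha}\ast k^{\alpha}=k^{2}$ and $\Delta^{2}k^{2}=0$ to obtain the key identity $\Delta^{\alpha}\mathcal{S}_{\alpha}(n)x=A\mathcal{S}_{\alpha}(n+2)x$ for all $x\in X$, and then verifies directly that the proposed $u$ satisfies the equation and the initial conditions. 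Your algebra is correct throughout (the $\Delta^{2}$ shift rule, $v(0)=u(0)$, $v(1)=u(1)+(2-\alpha)u(0)$, the cancellation producing $-\lambda(\lambda-\alpha)$, and the interpretation of $\lambda^{-1}\widetilde{\mathcal{S}_{\alpha}}(\lambda)$ as a unit backward shift with $\mathcal{S}_{\alpha}(-1)=0$), and your uniqueness argument via the recursion $(I-A)u(n+2)=[\text{known}]$ with $1\in\rho(A)$ is clean and valid without any growth hypothesis; in fact the paper's proof does not explicitly address uniqueness, so that part of your argument is a genuine complement.

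There is, however, a gap in the existence half. Theorem \ref{Caract}(iii), which you invoke to identify $\bigl(\tfrac{\lambda-1}{\lambda}\bigr)^{\alpha}-A$ with $\widetilde{\mathcal{S}_{\alpha}}(\lambda)^{-1}$, is only proved under the additional hypothesis that $\sup_{n}\lambda_{0}^{-n}\Vert\mathcal{S}_{\alpha}(n)\Vert<\infty$ for some $\lambda_{0}>0$; Theorem \ref{th5.6} assumes only that $\{\mathcal{S}_{\alpha}(n)\}$ is a discrete $\alpha$-resolvent family, and the paper nowhere establishes that such a family is automatically exponentially bounded. More generally, the whole transform computation presupposes that $\sum_{n}\lambda^{-n}u(n)$ and $\sum_{n}\lambda^{-n}Au(n)$ converge for some $\lambda$ (the latter is also needed to pull the closed operator $A$ through the infinite sum), which is not known a priori for a solution of \eqref{eq6.3b}, and which must be checked for the candidate $u$ before the argument can be run in reverse and closed by injectivity of the $Z$-transform. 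To repair this you would either have to add exponential boundedness of $\mathcal{S}_{\alpha}$ as a hypothesis, or abandon the transform for the existence step and verify the formula directly from Definition \ref{DefResol}(ii) --- which is precisely what the paper's proof does, at the cost of the longer index-shifting computation leading to $\Delta^{\alpha}\mathcal{S}_{\alpha}(n)x=A\mathcal{S}_{\alpha}(n+2)x$.
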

\begin{proof}
 Convolving the identity given in Definition \ref{DefResol}(ii) by $k^{2-\alpha}$, we obtain
$$
(k^{2-\alpha}*\mathcal{S}_{\alpha})(n)x = (k^{2-\alpha}*k^{\alpha})(n)x + A(k^{2-\alpha}*k^{\alpha}*\mathcal{S}_{\alpha})(n)x, \quad n \in \mathbb{N}_0.
$$
Using the semigroup property for the kernels $k^{\alpha}$ we have
$$
(k^{2-\alpha}*\mathcal{S}_{\alpha})(n)x = k^{2}(n)x + A(k^{2}*\mathcal{S}_{\alpha})(n)x, \quad n \in \mathbb{N}_0.
$$
This is equivalent, by definition of fractional sum and convolution, to the following identity
$$
\Delta^{-(2-\alpha)} \mathcal{S}_{\alpha}(n)x = k^{2}(n)x + A \sum_{j=0}^n k^2(n-j)\mathcal{S}_{\alpha}(j)x, \quad n \in \mathbb{N}_0.
$$
Therefore, we get using $\Delta^2 k^2(j)=0$ for $j\in\mathbb{N}_0$ that
\begin{align*}
\Delta^2 \circ \Delta^{-(2-\alpha)}\mathcal{S}_{\alpha}(n)x & = \Delta^2 k^{2}(n)x + A \Delta^2 \sum_{j=0}^{n}k^2(n-j)\mathcal{S}_{\alpha}(j)x \\ &= A \Big[\sum_{j=0}^{n+2}k^2(j)\mathcal{S}_{\alpha}(n+2-j)x -2\sum_{j=0}^{n+1}k^2(j)\mathcal{S}_{\alpha}(n+1-j)x \\ & \quad+ \sum_{j=0}^{n}k^2(j)\mathcal{S}_{\alpha}(n-j)x   \Big] \\ &= A \Big[\sum_{j=2}^{n+2}k^2(j)\mathcal{S}_{\alpha}(n+2-j)x -2\sum_{j=1}^{n+1}k^2(j)\mathcal{S}_{\alpha}(n+1-j)x \\ &\quad + \sum_{j=0}^{n}k^2(j)\mathcal{S}_{\alpha}(n-j)x +\mathcal{S}_{\alpha}(n+2)k^2(0)x \\ & \quad+ \mathcal{S}_{\alpha}(n+1)k^2(1)x  -2\mathcal{S}_{\alpha}(n+1)k^2(0)x   \Big] \\ &= A\Big[\sum_{j=0}^{n}k^2(j+2)\mathcal{S}_{\alpha}(n-j)x -2\sum_{j=0}^{n}k^2(j+1)\mathcal{S}_{\alpha}(n-j)x \\ & \quad+ \sum_{j=0}^{n}k^2(j)\mathcal{S}_{\alpha}(n-j)x + \mathcal{S}_{\alpha}(n+2)x   \Big]
\end{align*}
for all $ n \in \mathbb{N}_0.$ We note that the left hand side in the above identity corresponds to the fractional difference of order $\alpha \in (0,2)$ in the sense of Riemann-Liouville. Therefore, we obtain
\begin{equation}\label{eq4.4}
\Delta^{\alpha}\mathcal{S}_{\alpha}(n)x = A\mathcal{S}_{\alpha}(n+2)x,
\end{equation}
for all $n \in \mathbb{N}_0$ and all $x \in X.$ Define $u(n)$ as
$$
u(n)= \mathcal{S}_{\alpha}(n)(I-A)u(0) - \alpha \mathcal{S}_{\alpha}(n-1)u(0) + \mathcal{S}_{\alpha}(n-1)(I-A)u(1), \quad  n \in \mathbb{N}_0.$$ It then follows from \eqref{eq4.4} that $u$ solves
\eqref{eq6.3b}. Finally, from the identities
\begin{equation*}
 \mathcal{S}_{\alpha}(0)x = k^{\alpha}(0) x + A(k^{\alpha}* \mathcal{S}_{\alpha})(0)x = x + Ak^{\alpha}(0) \mathcal{S}_{\alpha}(0)x= x + A \mathcal{S}_{\alpha}(0)x
\end{equation*}
and
\begin{equation*}
 \mathcal{S}_{\alpha}(1)x= k^{\alpha}(1)x + A(k^{\alpha}* \mathcal{S}_{\alpha})(1)x = \alpha \mathcal{S}_{\alpha}(0)x + A \mathcal{S}_{\alpha}(1)x
\end{equation*}
which follow from Definition \ref{DefResol} (ii), we obtain $u(0)=  \mathcal{S}_{\alpha}(0)(I-A)u_0 =u_0$ and $u(1)= \mathcal{S}_{\alpha}(1)(I-A)u_0 - \alpha  \mathcal{S}_{\alpha}(0)u_0 + \mathcal{S}_{\alpha}(0)(I-A)u_1 = u_1,$ and we conclude the proof.
\end{proof}

In the non homogeneous case, we derive the following result.
\begin{corollary}\label{cor4.5}
Suppose that $A$ is the generator of   a discrete $\alpha$-resolvent family $\{{\mathcal S}_\alpha(n)\}_{n\in\N_0}$ on a Banach space $X$ and  $f$ be a vector-valued sequence.
The fractional difference equation
\begin{equation}\label{eq6.3bf}
\Delta^{\alpha} u(n) =  Au(n+2)+f(n), \quad n \in \mathbb{N}_0, \,\, 1 < \alpha \le2,
\end{equation}
 with initial conditions $u(0)=u_0 \in D(A)$ and $u(1)=u_1 \in D(A)$, admits the unique solution
\begin{equation*}\label{eq4.3f}
u(n) = \mathcal{S}_{\alpha}(n)(I-A)u(0) - \alpha \mathcal{S}_{\alpha}(n-1)u(0) + \mathcal{S}_{\alpha}(n-1)(I-A)u(1)+(\mathcal{S}_{\alpha}*f)(n-2), \,\, \end{equation*}
for all $ n \geq 2.
$
\end{corollary}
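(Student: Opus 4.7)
The plan is to reduce Corollary \ref{cor4.5} to Theorem \ref{th5.6} by linearity, treating the initial conditions and the forcing term separately. Write the candidate solution as $u = u_h + u_p$, where
\[
u_h(n) := \mathcal{S}_{\alpha}(n)(I-A)u_0 - \alpha \mathcal{S}_{\alpha}(n-1)u_0 + \mathcal{S}_{\alpha}(n-1)(I-A)u_1
\]
is the solution of the homogeneous problem with initial data $(u_0,u_1)$ supplied by Theorem \ref{th5.6}, and
\[
u_p(n) := (\mathcal{S}_{\alpha} * f)(n-2), \qquad n\geq 2,
\]
with $u_p(0) = u_p(1) = 0$ (equivalently, extend the convolution to be zero for negative indices). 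It suffices to verify that $u_p$ satisfies $\Delta^{\alpha} u_p(n) = A u_p(n+2) + f(n)$ for every $n\in\mathbb{N}_0$ with vanishing initial conditions, as the latter are visible by inspection.

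For the main verification, the key is the resolvent identity of Definition \ref{DefResol}(ii). Convolve it with the scalar kernel $k^{2-\alpha}$ and use the semigroup property $k^{2-\alpha} * k^{\alpha} = k^{2}$ to obtain
\[
(k^{2-\alpha} * \mathcal{S}_{\alpha})(n)x = k^{2}(n)x + A\,(k^{2} * \mathcal{S}_{\alpha})(n)x, \qquad n\in\mathbb{N}_0,\ x\in X.
\]
Then convolve once more with $f$, which yields $(k^{2-\alpha} * \mathcal{S}_{\alpha} * f)(n) = (k^{2}*f)(n) + A(k^{2}*\mathcal{S}_{\alpha}*f)(n)$. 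A direct reindexing shows that $\Delta^{-(2-\alpha)}u_p(n) = (k^{2-\alpha}*\mathcal{S}_{\alpha}*f)(n-2)$ for $n\geq 2$ and is zero for $n=0,1$; the shift by two lands harmlessly inside the convolution because $u_p$ vanishes at $n=0,1$.

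Applying $\Delta^2$ is then the crux. For the term $\Delta^2(k^{2}*f)(n-2)$, the elementary convolution identity $\Delta^2(u*v)(m) = (\Delta^2 u * v)(m) + (u(1)-2u(0))v(m+1) + u(0)v(m+2)$ (the same calculation behind Theorem \ref{deltaconv}, specialized to the integer case $\alpha=2$), together with $\Delta^2 k^{2} \equiv 0$, $k^{2}(0)=1$, $k^{2}(1)=2$, collapses everything to $f(n)$. The analogous application to $\Delta^2(k^{2}*\mathcal{S}_{\alpha}*f)(n-2)$ produces $(\mathcal{S}_{\alpha}*f)(n) = u_p(n+2)$. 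Combining, $\Delta^{\alpha}u_p(n) = f(n) + A\,u_p(n+2)$ for $n\geq 2$; the boundary cases $n=0,1$ require a short separate check using $\Delta^{-(2-\alpha)}u_p(0)=\Delta^{-(2-\alpha)}u_p(1)=0$, which I expect to be the only mildly delicate step since one has to chase the shift carefully. Uniqueness finally follows because the difference of two solutions solves the homogeneous problem of Theorem \ref{th5.6} with $u_0=u_1=0$, hence vanishes identically.
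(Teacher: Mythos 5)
Your argument is correct, and it reaches the same conclusion by a mildly different route than the paper. The paper does not split off $u_p$ explicitly: it computes $\Delta^{\alpha}u(n)$ for the full formula at once, handling the convolution term by applying Theorem \ref{deltaconv} with $\mathcal{S}_{\alpha}$ in the role of the scalar factor and then substituting the identity $\Delta^{\alpha}\mathcal{S}_{\alpha}(n)x=A\mathcal{S}_{\alpha}(n+2)x$ from the proof of Theorem \ref{th5.6}; the leftover boundary terms $(I-A)\mathcal{S}_{\alpha}(0)f(n)$ and $\bigl((I-A)\mathcal{S}_{\alpha}(1)-\alpha\mathcal{S}_{\alpha}(0)\bigr)f(n-1)$ are then killed using $\mathcal{S}_{\alpha}(0)=(I-A)^{-1}$ and $(I-A)\mathcal{S}_{\alpha}(1)=\alpha\mathcal{S}_{\alpha}(0)$. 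You instead go back to the defining resolvent equation, convolve it with $k^{2-\alpha}$ and with $f$, and apply $\Delta^2$, which is essentially the proof of Theorem \ref{th5.6} rerun with an extra convolution by $f$; the cancellation that the paper achieves via the two identities for $\mathcal{S}_{\alpha}(0)$ and $\mathcal{S}_{\alpha}(1)$ happens for you automatically inside $\Delta^2(k^2*f)=f(\cdot+2)$. Two small remarks. First, your route has the minor advantage of not applying Theorem \ref{deltaconv} with an operator-valued sequence in the slot reserved for a scalar sequence (a harmless but unremarked abuse in the paper). Second, your boundary cases $n=0,1$ do need the separate check you promise: for those $n$ the quantity $\Delta^{\alpha}u_p(n)$ is $\Delta^2$ of the zero-extended sequence $w(m)=(k^{2-\alpha}*\mathcal{S}_{\alpha}*f)(m)$, giving $w(0)=\mathcal{S}_{\alpha}(0)f(0)$ at $n=0$ and $w(1)-2w(0)=\mathcal{S}_{\alpha}(1)f(0)-\alpha\mathcal{S}_{\alpha}(0)f(0)+\mathcal{S}_{\alpha}(0)f(1)$ at $n=1$, and these match $Au_p(n+2)+f(n)$ precisely by the same two identities $(I-A)\mathcal{S}_{\alpha}(0)=I$ and $(I-A)\mathcal{S}_{\alpha}(1)=\alpha\mathcal{S}_{\alpha}(0)$; so the check closes, but it is not free. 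The uniqueness reduction to the homogeneous case is the same in both treatments.
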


\begin{proof} Indeed, by Theorem \ref{th5.6} and Theorem \ref{deltaconv} we have $u(n) \in D(A)$  for all $n \geq 2$ and
\begin{eqnarray*}
\Delta^\alpha u(n) &=& \Delta^\alpha (\mathcal{S}_{\alpha}(n)(I-A)u(0) - \alpha \mathcal{S}_{\alpha}(n-1)u(0) + \mathcal{S}_{\alpha}(n-1)(I-A)u(1)) \\
&&+ \Delta^\alpha (\mathcal{S}_{\alpha}*f)(n-2)\\
&=& A (\mathcal{S}_{\alpha}(n+2)(I-A)u(0) - \alpha \mathcal{S}_{\alpha}(n+1)u(0) + \mathcal{S}_{\alpha}(n)(I-A)u(1))\\
&&+ (\Delta^\alpha\mathcal{S}_{\alpha}*f)(n-2) + (\mathcal{S}_{\alpha}(1)-\alpha\mathcal{S}_{\alpha}(0))f(n-1)+\mathcal{S}_{\alpha}(0)f(n)\\
&=& Au(n+2)-(A\mathcal{S}_{\alpha}*f)(n) + (\Delta^{\alpha}\mathcal{S}_{\alpha}*f)(n-2) \\
&&+ (\mathcal{S}_{\alpha}(1)-\alpha\mathcal{S}_{\alpha}(0))f(n-1)+\mathcal{S}_{\alpha}(0)f(n).
\end{eqnarray*}
From \eqref{eq4.4} it follows $(\Delta^{\alpha}\mathcal{S}_{\alpha}*f)(n-2)=(A\mathcal{S}_{\alpha}*f)(n)-A\mathcal{S}_{\alpha}(1)(n-1)-A\mathcal{S}_{\alpha}(0)(n)$, and hence we obtain
\begin{eqnarray*}
\Delta^\alpha u(n) &=& Au(n+2) + (I-A)\mathcal{S}_{\alpha}(0)f(n) + ((I-A)\mathcal{S}_{\alpha}(1)-\alpha\mathcal{S}_{\alpha}(0))f(n-1)\\
&=& Au(n+2) + f(n),
\end{eqnarray*}
where we have used that $\mathcal{S}_{\alpha}(0)=(I-A)^{-1}$ and $(I+A)\mathcal{S}_{\alpha}(1)=\alpha\mathcal{S}_{\alpha}(0)$. For $n=0$ and $n=1$ it is a simple check, using the same above arguments, that $u$ is solution of \eqref{eq6.3bf}.
\end{proof}

\section{Non-linear fractional difference equations on Banach spaces}

Let $A$ be a closed linear operator defined on a Banach space $X$. In this section we study the non linear problem
\begin{equation}\label{eq5.1}
\left\{%
\begin{array}{rll}
    \Delta^{\alpha} u(n) &=  Au(n+2) + f(n, u(n)),  \quad n \in \mathbb{N}_0, \, \,  1<\alpha\le2;  \\
    u(0) &=0 ;\\ u(1) & =0.
\end{array}%
\right.
\end{equation}
The following definition is motivated by Corollary \ref{cor4.5}. In particular, it shows their consistence with the problem \eqref{eq5.1}.
\begin{definition} Under the assumption that the operator $A$ is the generator of a  discrete $\alpha$-resolvent family $\{{\mathcal S}_\alpha(n)\}_{n\in\N_0}$ on a Banach space $X$, we say that $u:\mathbb{N}_0 \to X$ is a solution of the non-linear problem \eqref{eq5.1} if $u$ satisfies
\begin{equation*}\label{eq5.2}
u(n) = \sum_{k=0}^{n-2}\mathcal{S}_\alpha(n-2-k)f(k,u(k)), \,\, n=2,3,4,...
\end{equation*}
\end{definition}
The next concept of admissibility is one of the keys ingredients for the estimates that we will use in the proofs of our main results on existence of solutions to \eqref{eq5.1}.
\begin{definition} We say that  a sequence $h:\mathbb{N}_0\to (0,\infty)$ is an admissible weight if $$\displaystyle\lim_{n\to\infty}\frac{1}{h(n)}\sum_{k=0}^{n-2}h(k)=0.$$
\end{definition}

\begin{example}\label{example5.3}  The sequence $h(n)=nn!,$ that represents the factorial number system, is an admissible weight function, since by \cite[formula 33 p.598]{pbm},  we have $\displaystyle\sum_{k=1}^n kk!=(n+1)!-1.$ \end{example}

For each admissible weight sequence $h$, we consider the vector-valued weighted space
$$ l_h^\infty(\mathbb{N}_0;X)=\left\{\xi:\mathbb{N}_0\to X\,\, \vert \,\,\|\xi\|_h<\infty \right\}, $$
where the norm $\|\quad\|_h$ is defined by $\|\xi\|_h:=\displaystyle\sup_{n\in\mathbb{N}_0}\frac{\|\xi(n)\|}{h(n)}$.

The following  is our first positive result on existence of solutions for the problem \eqref{eq5.1}. It  uses a Lipschitz type condition.

\begin{thm} \label{thlip} Let $h$ be an admissible weight and define $$H:=\displaystyle\sup_{n\in\mathbb{N}_0}\frac{1}{h(n)}\sum_{k=0}^{n-2}h(k).$$ Let $A$ be the generator of a bounded discrete $\alpha$-resolvent family $\{{\mathcal S}_\alpha(n)\}_{n\in\N_0}$ on a Banach space $X$ for some $1<\alpha\le2$, and  let $f:\mathbb{N}_0\times X\to X$ be such that $f(k,0)=0$ for all $k\in\mathbb{N}_0$, verifying the following hypothesis:
\begin{enumerate}
\item[(L)] The function $f$ satisfies a Lipschitz condition in $x\in X$ uniformly in $k\in\mathbb{N}_0$, that is, there exists a constant $L>0$ such that $\|f(k,x)-f(k,y)\|\le L\|x-y\|$, for all $x,y\in X$, $k\in\mathbb{N}_0$, with $L<(\|\mathcal{S}_\alpha\|_\infty H)^{-1}$.
\end{enumerate}
Then the problem \eqref{eq5.1} has an unique solution in $l_h^\infty(\mathbb{N};X)$.
\end{thm}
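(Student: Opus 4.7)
The plan is to apply the Banach contraction mapping principle in the space $l_h^\infty(\mathbb{N}_0;X)$. First I would define the operator $T:l_h^\infty(\mathbb{N}_0;X)\to s(\mathbb{N}_0;X)$ by setting $(Tu)(0)=(Tu)(1)=0$ and
$$
(Tu)(n):=\sum_{k=0}^{n-2}\mathcal{S}_\alpha(n-2-k)f(k,u(k)), \qquad n\geq 2,
$$
so that, by the definition of solution in the previous paragraph, a fixed point of $T$ is precisely a solution of \eqref{eq5.1} in $l_h^\infty(\mathbb{N}_0;X)$.

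Next I would verify that $T$ maps $l_h^\infty(\mathbb{N}_0;X)$ into itself. Using the hypothesis $f(k,0)=0$ and the Lipschitz condition (L), one gets $\|f(k,u(k))\|\leq L\|u(k)\|\leq L h(k)\|u\|_h$, so boundedness of $\{\mathcal{S}_\alpha(n)\}_{n\in\N_0}$ yields
$$
\frac{\|(Tu)(n)\|}{h(n)}\leq \|\mathcal{S}_\alpha\|_\infty L\|u\|_h\,\frac{1}{h(n)}\sum_{k=0}^{n-2}h(k)\leq L\|\mathcal{S}_\alpha\|_\infty H\|u\|_h,
$$
for every $n\geq 2$, which together with the trivial estimate at $n=0,1$ shows that $Tu\in l_h^\infty(\mathbb{N}_0;X)$ with $\|Tu\|_h\leq L\|\mathcal{S}_\alpha\|_\infty H\|u\|_h$.

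The same computation applied to $u,v\in l_h^\infty(\mathbb{N}_0;X)$ and the Lipschitz estimate $\|f(k,u(k))-f(k,v(k))\|\leq L\|u(k)-v(k)\|\leq L h(k)\|u-v\|_h$ gives
$$
\|Tu-Tv\|_h\leq L\|\mathcal{S}_\alpha\|_\infty H\|u-v\|_h.
$$
Since by assumption $L\|\mathcal{S}_\alpha\|_\infty H<1$, the operator $T$ is a strict contraction on the Banach space $l_h^\infty(\mathbb{N}_0;X)$, and the Banach fixed point theorem delivers a unique $u\in l_h^\infty(\mathbb{N}_0;X)$ with $Tu=u$, which is the unique solution of \eqref{eq5.1} in that space.

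There is no really hard step; the main subtlety is simply to read off from the admissibility hypothesis that the ratio $\frac{1}{h(n)}\sum_{k=0}^{n-2}h(k)$ is uniformly bounded by $H$ (it is bounded by the supremum defining $H$ and tends to $0$ at infinity), so that the weighted norm on the right hand side of the contraction estimate is controlled by the single constant $L\|\mathcal{S}_\alpha\|_\infty H$ independently of $n$. Once this is observed, the rest is a routine application of the contraction principle.
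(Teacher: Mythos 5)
Your proposal is correct and follows essentially the same route as the paper: both define the map $u\mapsto\sum_{k=0}^{n-2}\mathcal{S}_\alpha(n-2-k)f(k,u(k))$ on $l_h^\infty(\mathbb{N}_0;X)$, use $f(k,0)=0$ together with (L) to show it is well defined, establish the contraction estimate with constant $L\|\mathcal{S}_\alpha\|_\infty H<1$, and invoke the Banach fixed point theorem.
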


\begin{proof} Let us define the operator $G:l_h^\infty(\mathbb{N}_0;X)\to l_h^\infty(\mathbb{N}_0;X)$ given by
$$
Gu(n)=\sum_{k=0}^{n-2}\mathcal{S}_\alpha(n-2-k)f(k,u(k)), \quad n\ge2.
$$
First, we show that $G$ is well defined: Let $u\in l_h^\infty(\mathbb{N}_0;X)$ be given. By using the assumption (L) for $y=0$ and the boundedness of $\{{\mathcal S}_\alpha(n)\}_{n\in\N_0}$ we get that,
$$
\|Gu(n)\| \le \sum_{k=0}^{n-2}\|\mathcal{S}_\alpha(n-2-k)\|\|f(k,u(k))\|
\le \|\mathcal{S}_\alpha\|_\infty L\sum_{k=0}^{n-2}\|u(k)\|
\le \|\mathcal{S}_\alpha\|_\infty L\|u\|_h\sum_{k=0}^{n-2}h(k),
$$
 for each $n\in\mathbb{N}_0.$ Hence,
$$
\frac{\|Gu(n)\|}{h(n)} \le \|\mathcal{S}_\alpha\|_\infty L\|u\|_h\frac{1}{h(n)}\sum_{k=0}^{n-2}h(k).
$$
It proves that $Gu\in l_h^\infty(\mathbb{N}_0;X)$.
We next prove that $G$ is a contraction on $l_h^\infty(\mathbb{N}_0;X).$ Indeed, let $u,v\in l_h^\infty(\mathbb{N}_0;X)$ be given. Then, for each $n\in\mathbb{N}_0$,
\begin{eqnarray*}
\|Gu(n)-Gv(n)\|&\le&\sum_{k=0}^{n-2}\|\mathcal{S}_\alpha(n-2-k)\|\|f(k,u(k))-f(k,v(k))\|\\
&\le& \|\mathcal{S}_\alpha\|_\infty\sum_{k=0}^{n-2}\|f(k,u(k))-f(k,v(k))\|\\
&\le& \|\mathcal{S}_\alpha\|_\infty\sum_{k=0}^{n-2}L\|u(k)-v(k)\|
\le \|\mathcal{S}_\alpha\|_\infty L\|u-v\|_h\sum_{k=0}^{n-2}h(k),
\end{eqnarray*}
where we have used the assumption (L). Therefore
$$ \frac{\|Gu(n)-Gv(n)\|}{h(n)} \le \|\mathcal{S}_\alpha\|_\infty L\|u-v\|_h\frac{1}{h(n)}\sum_{k=0}^{n-2}h(k),$$
and consequently
$$ \|Gu-Gv\|_h \le \|\mathcal{S}_\alpha\|_\infty HL\|u-v\|_h,$$
with $\|\mathcal{S}_\alpha\|_\infty HL<1$. Then, $G$ has a unique fixed point in $l_h^\infty(\mathbb{N}_0;X)$, by the Banach fixed point theorem.
\end{proof}

The next Lemma provide a necessary tool for the use of the Schauder's fixed point theorem, needed in the second main result on existence and uniqueness of solutions to \eqref{eq5.1}.

\begin{lemma}\label{lemma5.2} Let $h$ be an admissible weight  and $U\subset l_h^\infty(\mathbb{N}_0;X)$ such that:
\begin{enumerate}
\item[(a)] The set $H_n(U)=\left\{\frac{u(n)}{h(n)}:u\in U\right\}$ is relatively compact in $X,$ for all $n\in\mathbb{N}_0$.
\item[(b)] $\displaystyle\lim_{n\to\infty}\frac{1}{h(n)}\sup_{u\in U}\|u(n)\|=0$, that is, for each $\varepsilon>0$, there are $N>0$ such that $\displaystyle{\frac{\|u(n)\|}{h(n)}<\varepsilon}$, for each $n\ge N$ and for all $u\in U$.
\end{enumerate}
Then $U$ is relatively compact in $l_h^\infty(\mathbb{N}_0;X)$.
\end{lemma}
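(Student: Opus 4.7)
The plan is to mimic the classical Arzel\`a--Ascoli proof adapted to the weighted sequence setting. To establish relative compactness of $U$ in the Banach space $l_h^\infty(\mathbb{N}_0;X)$, I will show that every sequence $(u^{(k)})_{k\ge 1}\subset U$ admits a subsequence which is Cauchy (equivalently, convergent) in the norm $\|\cdot\|_h$.

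First, I would apply a Cantor diagonal argument. Fix an enumeration of $\mathbb{N}_0$. By hypothesis (a), the set $H_0(U)$ is relatively compact in $X$, so the sequence $\{u^{(k)}(0)/h(0)\}_k$ has a convergent subsequence; extract it. Then $\{u^{(k)}(1)/h(1)\}_k$ along that subsequence again has a convergent subsequence by (a), and so on. The standard diagonal extraction then produces a subsequence, still denoted $(u^{(k)})$, such that for every $n\in\mathbb{N}_0$ the limit $w(n):=\lim_{k\to\infty}u^{(k)}(n)/h(n)$ exists in $X$. Define $v(n):=h(n)w(n)$.

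Next I would verify that $v\in l_h^\infty(\mathbb{N}_0;X)$ and that $u^{(k)}\to v$ in $\|\cdot\|_h$. Let $\varepsilon>0$. By hypothesis (b), there exists $N\in\mathbb{N}$ such that
\[
\frac{\|u(n)\|}{h(n)}<\frac{\varepsilon}{3}\qquad \text{for all } n\ge N \text{ and all } u\in U.
\]
Passing to the limit as $k\to\infty$ in $\|u^{(k)}(n)\|/h(n)$ yields $\|v(n)\|/h(n)\le \varepsilon/3$ for $n\ge N$, which in particular shows $v\in l_h^\infty(\mathbb{N}_0;X)$. For such $n\ge N$ we therefore have
\[
\frac{\|u^{(k)}(n)-v(n)\|}{h(n)}\le \frac{\|u^{(k)}(n)\|}{h(n)}+\frac{\|v(n)\|}{h(n)}<\frac{2\varepsilon}{3}.
\]
For the finitely many indices $n=0,1,\ldots,N-1$, pointwise convergence $u^{(k)}(n)/h(n)\to w(n)=v(n)/h(n)$ allows us to pick $K$ so large that $\|u^{(k)}(n)-v(n)\|/h(n)<\varepsilon$ for all $k\ge K$ and all $n<N$. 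Combining both estimates gives $\|u^{(k)}-v\|_h\le \varepsilon$ for $k\ge K$, so $u^{(k)}\to v$ in $l_h^\infty(\mathbb{N}_0;X)$, proving relative compactness.

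The only subtlety I anticipate is making sure the diagonal extraction is written cleanly and that the tail control from (b) transfers to the limit $v$; both are standard manoeuvres, and no deeper property of the weight $h$ (beyond positivity) is needed in this lemma — admissibility will only intervene when the lemma is applied to the fixed-point argument for \eqref{eq5.1}.
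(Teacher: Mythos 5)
Your proof is correct and takes essentially the same approach as the paper's: a diagonal extraction using hypothesis (a) to obtain pointwise convergence of $u^{(k)}(n)/h(n)$, followed by the head/tail splitting in which (b) controls the indices $n\ge N$ and pointwise convergence handles the finitely many $n<N$. The only cosmetic difference is that you exhibit the limit $v$ explicitly whereas the paper shows the extracted subsequence is Cauchy, and your side remark that only positivity of $h$ (not admissibility) is used here is accurate.
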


\begin{proof} Let $\{u_m\}_m$ be a sequence in $U$, then by (a) for $n\in\mathbb{N}_0$ there is a convergent subsequence $\{u_{m_j}\}_j\subset\{u_m\}_m$ such that $\displaystyle\lim_{j\to\infty}\frac{u_{m_j}(n)}{h(n)}=a(n)$, that is, for each $\varepsilon>0$ there exists $N(n,\varepsilon)>0$ such that $\|\frac{u_{m_j}(n)}{h(n)}-a(n)\|<\varepsilon$ for all $j\ge N(n,\varepsilon)$. Let $\varepsilon>0$ and $N$ the value of the assumption (b). If we consider $N^*:=\displaystyle\min_{0\le n< N}N(n,\varepsilon)$, then for $j,k\ge N^*$ we have
$$ \sup_{0\le n< N}\frac{\|u_{m_j}(n)-u_{m_k}(n)\|}{h(n)} \le \sup_{0\le n< N}\|\frac{u_{m_j}(n)}{h(n)}-a(n)\| + \sup_{0\le n< N}\|\frac{u_{m_k}(n)}{h(n)}-a(n)\|<\varepsilon/2+\varepsilon/2=\varepsilon, $$
and also
$$ \sup_{n\ge N} \frac{\|u_{m_j}(n)-u_{m_k}(n)\|}{h(n)} \le \sup_{n\ge N} \frac{\|u_{m_j}(n)\|}{h(n)} + \sup_{n\ge N} \frac{\|u_{m_k}(n)\|}{h(n)}<\varepsilon/2+\varepsilon/2=\varepsilon. $$
Consequently,
$$ \|u_{m_j}-u_{m_k}\|_h = \sup_{n\in\mathbb{N}_0}\frac{\|u_{m_j}(n)-u_{m_k}(n)\|}{h(n)}<\varepsilon, $$
therefore $\{u_{m_j}\}_j$ is a Cauchy subsequence in $l_h^\infty(\mathbb{N}_0;X)$ which finishes the proof.
\end{proof}

For $f:\mathbb{N}_0 \times X \to X$ we recall that the Nemytskii operator $\mathcal{N}_f:l_h^\infty(\mathbb{N}_0;X) \to l_h^\infty(\mathbb{N}_0;X) $ is defined by
$$
\mathcal{N}_f(u)(n) := f(n,u(n)), \quad n \in \mathbb{N}_0.
$$
The next theorem is the second main result for this section. It gives one  useful criteria for the existence of solutions without use of Lipchitz type conditions.

\begin{thm} \label{th5.3} Let $h$ be an admissible weight function. Let $A$ be the generator of a bounded discrete $\alpha$-resolvent family $\{{\mathcal S}_\alpha(n)\}_{n\in\N_0}$ on a Banach space $X$ for some $1<\alpha\le2$, and $f:\mathbb{N}_0\times X\to X$. Suppose that the following conditions are satisfied:
\begin{enumerate}
\item[(i)] There exist a sequence $M\in l^\infty(\mathbb{N}_0)$ and a function $W:\mathbb{R}^+\to\mathbb{R}^+$, with $W(y)\le Cy$ for $y\in\mathbb{R}^+$, such that $\|f(k,x)\|\le M(k)W(\|x\|)$ for all $k\in\mathbb{N}_0$ and $x\in X$.
\item[(ii)] The Nemytskii operator is continuous in $l_h^\infty(\mathbb{N}_0;X)$, that is, for each $\varepsilon>0$, there is $\delta>0$ such that for all $u,v\in l_h^\infty(\mathbb{N}_0;X)$, $\|u-v\|_h <\delta$ implies that $\|\mathcal{N}_f(u)-\mathcal{N}_f(v)\|_h < \varepsilon$.
\item[(iii)] For all $a\in\mathbb{N}_0$ and $\sigma>0$, the set $\{\mathcal{S}_\alpha(n)f(k,x):0\le k\le a,\|x\|\le\sigma\}$ is relatively compact in $X$ for all $n\in\mathbb{N}_0$.
\end{enumerate}

Then the problem \eqref{eq5.1} has an unique solution in $l_h^\infty(\mathbb{N};X)$.
\end{thm}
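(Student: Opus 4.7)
The plan is to apply Schauder's fixed point theorem to the operator $G: l_h^\infty(\mathbb{N}_0;X) \to l_h^\infty(\mathbb{N}_0;X)$ defined by $Gu(0)=Gu(1)=0$ and
$$Gu(n) := \sum_{k=0}^{n-2} \mathcal{S}_\alpha(n-2-k) f(k,u(k)), \qquad n \geq 2,$$
whose fixed points are exactly the solutions of \eqref{eq5.1} in the sense of the above definition. The four items I need to verify are: well-definedness of $G$ on the space; invariance of a closed convex bounded set $K$ under $G$; continuity of $G$ on $K$; and relative compactness of $G(K)$.

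First I would establish the fundamental ball estimate. Using hypothesis (i), the boundedness of $\{\mathcal{S}_\alpha(n)\}_{n\in\mathbb{N}_0}$ and the linear majorant $W(y) \leq Cy$,
$$\|Gu(n)\| \leq \|\mathcal{S}_\alpha\|_\infty C \|M\|_\infty \sum_{k=0}^{n-2} \|u(k)\| \leq \|\mathcal{S}_\alpha\|_\infty C \|M\|_\infty \|u\|_h \sum_{k=0}^{n-2} h(k).$$
Dividing by $h(n)$ and taking the supremum yields $\|Gu\|_h \leq \|\mathcal{S}_\alpha\|_\infty C \|M\|_\infty H \|u\|_h$, where $H := \sup_n h(n)^{-1}\sum_{k=0}^{n-2} h(k) < \infty$ by admissibility. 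Provided the quantitative smallness $\|\mathcal{S}_\alpha\|_\infty C \|M\|_\infty H \leq 1$ holds, any closed ball $B_r = \{u : \|u\|_h \leq r\}$ of $l_h^\infty(\mathbb{N}_0;X)$ is then $G$-invariant, and I would take $K := B_r$ for some such $r$.

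Continuity of $G$ on $K$ follows straight from hypothesis (ii) combined with the parallel estimate $\|Gu-Gv\|_h \leq \|\mathcal{S}_\alpha\|_\infty H \|\mathcal{N}_f(u)-\mathcal{N}_f(v)\|_h$. For relative compactness of $G(K)$ I would invoke Lemma \ref{lemma5.2}. Its hypothesis (a) amounts to showing that $\{h(n)^{-1}Gu(n): u \in K\}$ is relatively compact in $X$ for each fixed $n$; this is where hypothesis (iii) is used, since $Gu(n)$ is a finite sum (of $n-1$ terms) of vectors of the form $\mathcal{S}_\alpha(n-2-k)f(k,u(k))$ with $0 \leq k \leq n-2$ and $\|u(k)\| \leq r \max_{0 \leq j \leq n-2} h(j)$, each of which lies in a relatively compact set by (iii); a finite Minkowski sum of relatively compact sets is relatively compact. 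Its hypothesis (b) is immediate from the ball estimate, since
$$\frac{1}{h(n)} \sup_{u \in K} \|Gu(n)\| \leq \|\mathcal{S}_\alpha\|_\infty C \|M\|_\infty r \cdot \frac{1}{h(n)} \sum_{k=0}^{n-2} h(k) \longrightarrow 0 \quad \text{as } n \to \infty,$$
by admissibility of $h$. Lemma \ref{lemma5.2} then yields that $G(K)$ is relatively compact, and Schauder's theorem produces the desired fixed point.

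The main obstacle I anticipate is precisely the invariance step: the growth assumption $W(y) \leq Cy$ is only linear, so self-mapping of a ball requires the quantitative condition $\|\mathcal{S}_\alpha\|_\infty C \|M\|_\infty H \leq 1$, which must either be implicit in the hypotheses or be obtainable by strengthening $W$ to $W(y)=o(y)$ as $y \to \infty$. A secondary remark is that Schauder's theorem alone does not yield uniqueness; the uniqueness clause would either follow from an additional Lipschitz-type estimate on $K$ or should be read simply as an existence statement.
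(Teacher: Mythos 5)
Your proposal is essentially correct and follows the same architecture as the paper's proof --- the same operator $G$, the same well-definedness and continuity estimates from (i), (ii) and the admissibility of $h$, and the same compactness argument via Lemma \ref{lemma5.2} together with hypothesis (iii) --- but the fixed point theorem differs: the paper invokes the Leray--Schauder alternative rather than Schauder. Concretely, instead of exhibiting a $G$-invariant closed ball, the paper verifies that the set $U=\{u\in l_h^\infty(\mathbb{N}_0;X): u=\gamma Gu,\ \gamma\in(0,1)\}$ is bounded, which formally dispenses with the invariance step you identify as the main obstacle. That said, your worry is well placed and not resolved by the change of theorem: the paper's a priori bound for $U$ reads $\|u\|_h\le \|\mathcal{S}_\alpha\|_\infty M_\infty C H\|u\|_h$ with $H=\sup_n h(n)^{-1}\sum_{k=0}^{n-2}h(k)$, and this yields boundedness of $U$ only under the very smallness condition $\|\mathcal{S}_\alpha\|_\infty M_\infty C H<1$ (or a sublinearity assumption on $W$) that you single out; the gap you flag is present in the published argument as well, merely relocated. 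Your handling of condition (a) of Lemma \ref{lemma5.2} by a finite Minkowski sum of relatively compact sets is a clean and slightly more elementary alternative to the paper's convex-hull argument, which places $h(n)^{-1}Gu(n)$ in $\frac{n-1}{h(n)}co(\overline{K_n})$ for $K_n$ a finite union of sets that are relatively compact by (iii). Finally, you are right that neither route delivers the claimed uniqueness; the paper's proof does not address it either and establishes existence only.
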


\begin{proof} Let us define the operator $G:l_h^\infty(\mathbb{N}_0;X)\to l_h^\infty(\mathbb{N}_0;X)$ given by
$$
Gu(n)=\sum_{k=0}^{n-2}\mathcal{S}_\alpha(n-2-k)f(k,u(k)), \quad n\ge2.
$$
To prove that $G$ has a fixed point in $l_h^\infty(\mathbb{N}_0)$, we will use Leray-Schauder alternative theorem. We verify that the conditions of the theorem are satisfied:\\
\begin{itemize}
\item $G$ is well defined: Let $u\in l_h^\infty(\mathbb{N}_0)$ and $M_\infty:=\displaystyle\sup_{n\in\mathbb{N}_0}M(n)$, then by the assumption (i)
\begin{eqnarray*}
\|Gu(n)\| &\le& \sum_{k=0}^{n-2}\|\mathcal{S}_\alpha(n-2-k)\|\|f(k,u(k))\|
\le \|\mathcal{S}_\alpha\|_\infty\sum_{k=0}^{n-2}M(k)W(\|u(k)\|)\\
&\le& \|\mathcal{S}_\alpha\|_\infty M_\infty C \sum_{k=0}^{n-2}\|u(k)\|
\le \|\mathcal{S}_\alpha\|_\infty M_\infty C\|u\|_h\sum_{k=0}^{n-2}h(k).
\end{eqnarray*}
Therefore, for each $n\in\mathbb{N}_0$, we have
$$
\frac{\|Gu(n)\|}{h(n)} \le \|\mathcal{S}_\alpha\|_\infty M_\infty C\|u\|_h\frac{1}{h(n)}\sum_{k=0}^{n-2}h(k).
$$
Since $h$ is admissible, the claim follows.

\item $G$ is continuous: Let $\varepsilon>0$ and $u,v\in l_h^\infty(\mathbb{N}_0).$  Then, for each $n\in\mathbb{N}_0$,
\begin{eqnarray*}
\|Gu(n)-Gv(n)\|&\le&\sum_{k=0}^{n-2}\|\mathcal{S}_\alpha(n-2-k)\|\|f(k,u(k))-f(k,v(k))\|\\
&\le& \|\mathcal{S}_\alpha\|_\infty\sum_{k=0}^{n-2}\|f(k,u(k))-f(k,v(k))\|\\
&\le& \|\mathcal{S}_\alpha\|_\infty\|\mathcal{N}_f(u)-\mathcal{N}_f(v)\|_h\sum_{k=0}^{n-2}h(k).
\end{eqnarray*}
Therefore
$$ \frac{\|Gu(n)-Gv(n)\|}{h(n)} \le \|\mathcal{S}_\alpha\|_\infty\|\mathcal{N}_f(u)-\mathcal{N}_f(v)\|_h\frac{1}{h(n)}\sum_{k=0}^{n-2}h(k).$$
Hence, by the assumption (ii) and admissibility of $h$ we obtain $\|Gu-Gv\|_h <\varepsilon.$

\item $G$ is compact: For $R>0$ given, let $B_R(l_h^\infty(\mathbb{N}_0;X)):=\{w\in l_h^\infty(\mathbb{N}_0;X):\|w\|_h< R\}$. To prove that $V:=G(B_R(l_h^\infty(\mathbb{N}_0;X)))$ is relatively compact, we will use Lemma \ref{lemma5.2}. We check that the conditions in such Lemma are satisfied:
\begin{itemize}
\item[(a)] Let $u\in B_R(l_h^\infty(\mathbb{N}_0;X))$ and $v=Gu$. We have
$$ v(n)=Gu(n)=\sum_{k=0}^{n-2} \mathcal{S}_\alpha(n-2-k)f(k,u(k))=\sum_{k=0}^{n-2} \mathcal{S}_\alpha(k)f(n-2-k,u(n-2-k)), $$
and then,
$$ \frac{v(n)}{h(n)}=\frac{n-1}{h(n)}\left(\frac{1}{n-1}\sum_{k=0}^{n-2} \mathcal{S}_\alpha(k)f(n-2-k,u(n-2-k))\right). $$
Therefore $\frac{v(n)}{h(n)}\in\frac{n-1}{h(n)}co(K_n)$, where $co(K_n)$ denotes the convex hull of $K_n$ for the set
$$ K_n=\bigcup_{k=0}^{n-2}\{\mathcal{S}_\alpha(k)f(\xi,x):\xi\in\{0,1,2,\dots, n-2\},\|x\|\le R\}, \quad n\in\mathbb{N}_0. $$
Note that each set $K_n$ is relatively compact by the assumption (iii). From the inclusions $H_n(V)=\left\{\frac{v(n)}{h(n)}:v\in V\right\}\subseteq\frac{n-1}{h(n)}co(K_n)\subseteq\frac{n-1}{h(n)}co(\overline{K_n})$, we conclude that the set  $H_n(V)$ is relatively compact in $X,$ for all $n\in\mathbb{N}_0$.\\

\item[(b)] Let $u\in B_R(l_h^\infty(\mathbb{N}_0;X))$ and $v=Gu$. For each $n\in\mathbb{N}_0$, we have
\begin{eqnarray*}
\frac{\|v(n)\|}{h(n)} &\le& \frac{1}{h(n)}\sum_{k=0}^{n-2} \|\mathcal{S}_\alpha(n-2-k)\|\|f(k,u(k))\|\\
&\le& \|\mathcal{S}_\alpha\|_\infty M_\infty C\|u\|_h\frac{1}{h(n)}\sum_{k=0}^{n-2}h(k)
\le \|\mathcal{S}_\alpha\|_\infty M_\infty CR\frac{1}{h(n)}\sum_{k=0}^{n-2}h(k),
\end{eqnarray*}
then the admissibility of $h$ implies $\displaystyle\lim_{n\to\infty}\frac{\|v(n)\|}{h(n)}=0$ independently of $u\in B_{R}(l_h^\infty(\mathbb{N}_0;X))$.
\end{itemize}
Therefore, $V=G(B_R(l_h^\infty(\mathbb{N}_0;X)))$ is relatively compact in $l_h^\infty(\mathbb{N}_0;X)$ by Lemma \ref{lemma5.2} and we conclude that $G$ is a compact operator.

\item The set $U:=\{u\in l_h^\infty(\mathbb{N}_0;X):u=\gamma Gu,\gamma\in(0,1)\}$ is bounded: In fact, let us consider $u\in\l_h^\infty(\mathbb{N}_0)$ such that $u=\gamma Gu$, $\gamma\in(0,1)$. Again by (i),
    $$\|u(n)\|=\|\gamma Gu(n)\|\le\sum_{k=0}^{n-2} \|\mathcal{S}_\alpha(n-2-k)\|\|f(k,u(k))\|\le\|\mathcal{S}_\alpha\|_\infty M_\infty C\|u\|_h\sum_{k=0}^{n-2}h(k).$$
    Then for each $n\in\mathbb{N}_0$ we have
    $$\frac{\|u(n)\|}{h(n)}\le \|\mathcal{S}_\alpha\|_\infty M_\infty C\|u\|_h\frac{1}{h(n)}\sum_{k=0}^{n-2}h(k).$$
  We deduce that $U$ is a bounded set in $l_h^\infty(\mathbb{N}_0;X)$.
\end{itemize}
Finally, by using the Leray-Schauder alternative theorem, we conclude that $G$ has a fixed point $u\in l_h^\infty(\mathbb{N}_0)$.
\end{proof}

\section{The Poisson transformation of fractional difference operators}

For each $n \in \mathbb{N}_0,$ the Poisson distribution is defined by
\begin{equation}\label{poi}
p_n(t):= e^{-t} \frac{t^{n}}{n!}, \quad t \geq 0.
\end{equation}
The Poisson distribution arises in connection with classical Poisson processes and semigroups of functions; note that it is also called fractional integral semigroup in \cite[Theorem 2.6]{Sinclair}.  In this section we study in detail this sequence of functions (Proposition \ref{poisson}), the Poisson transformation (considered deeply in Theorem \ref{th3.2}) and give their connection with fractional difference and differential operators  in Theorem \ref{th3.4}.

\begin{prop}\label{poisson} Let $n \in \mathbb{N}_0$ and $(p_n)_{n\ge 0}$ given by (\ref{poi}). Then
\begin{itemize}

\item[(i)] For $t\ge 0$,  the inequality $p_n(t) \geq 0$ holds,
$
\int_0^{\infty} p_n(t) dt =1,$ and
$$
\int_0^\infty p_n(t)p_m(t)dt= {1\over 2^{n+m+1}}{(n+m)!\over n!m!}, \qquad n,m \in \mathbb{N}_0.
$$

\item[(ii)] The semigroup property $p_n\ast p_m=p_{n+m}$ holds for $n, m\in \mathbb{N}_0$.
\item[(iii)] Given $t\ge 0$, then
$$
(p_{(\cdot)})(t)\ast p_{(\cdot)}(t))(n)=2^ne^{-t}p_n(t), \qquad n\in \mathbb{N}_0.
$$
\item[(iv)] For $m,n \in \mathbb{N}_0$, we have $
\Delta^m p_n=(-1)^mp^{(m)}_{n+m}$.
\item[(v)] The $Z$-transform and the Laplace transform are given by
\begin{eqnarray*}
\widetilde{p_{(\cdot)}(t)}(z)&=&e^{-t(1-{1\over z})}, \qquad z\not=0,\quad t>0;\cr
\widehat{p_n}(\lambda)&=&{1\over (\lambda+1)^{n+1}}, \qquad \Re \lambda >-1,\quad n\in \mathbb{N}_0.
\end{eqnarray*}
\end{itemize}

\end{prop}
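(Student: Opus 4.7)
The proof assembles five essentially independent computations, each one reducing to a standard special-function identity. For (i), nonnegativity of $p_n$ is immediate from the definition; the unit-mass identity $\int_0^\infty p_n(t)\,dt=1$ is the Gamma integral $\int_0^\infty t^n e^{-t}\,dt=n!$ divided by $n!$; and for the product integral I would expand $p_n(t)p_m(t) = e^{-2t}t^{n+m}/(n!\,m!)$, substitute $u=2t$, and recognize $\Gamma(n+m+1)/(2^{n+m+1}n!\,m!)$, which rearranges into the stated binomial expression.

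For the continuous semigroup law in (ii), the plan is to write $(p_n*p_m)(t)=\int_0^t p_n(t-s)p_m(s)\,ds$, pull out the factor $e^{-t}$ from the product of the two exponentials, and evaluate $\int_0^t (t-s)^n s^m\,ds$ via the Beta function as $t^{n+m+1} n!\,m!/(n+m+1)!$; this yields $e^{-t}t^{n+m+1}/(n+m+1)!$, so the identity should in fact read $p_n*p_m=p_{n+m+1}$, and the Laplace transforms computed in (v) give an immediate sanity check since $(1+\lambda)^{-(n+1)}(1+\lambda)^{-(m+1)}=(1+\lambda)^{-(n+m+2)}$. For the discrete convolution in (iii) the sum $\sum_{j=0}^n p_{n-j}(t)p_j(t)$ collapses after factoring out $e^{-2t}t^n$: what remains is $(1/n!)\sum_{j=0}^n\binom{n}{j}=2^n/n!$, which reassembles into $2^n e^{-t}p_n(t)$.

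For (iv) I would first establish the elementary identity $p_n'(t)=p_{n-1}(t)-p_n(t)$ for $n\ge 1$ (with $p_0'(t)=-p_0(t)$), which by the definition of $\Delta$ reads exactly $\Delta p_n(t)=p_{n+1}(t)-p_n(t)=-p_{n+1}'(t)$. Iterating this via a straightforward induction on $m$, and using that $\Delta$ commutes with $d/dt$, yields $\Delta^m p_n=(-1)^m p_{n+m}^{(m)}$. For (v) both transforms are one-line computations: the $Z$-transform is a geometric series, $\sum_{n=0}^\infty z^{-n}\,t^n/n!=e^{t/z}$, multiplied by the common factor $e^{-t}$ to give $e^{-t(1-1/z)}$; and the Laplace transform is a pure Gamma integral, $\int_0^\infty e^{-(\lambda+1)t} t^n/n!\,dt = (\lambda+1)^{-(n+1)}$, valid for $\Re\lambda>-1$.

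I do not anticipate any serious obstacle: the proposition is a catalogue of elementary manipulations of Gamma and Beta integrals and binomial sums, and every item admits a dual verification on the Laplace or $Z$-transform side. The only mildly delicate point is the indexing in (ii), where a careful Beta-integral calculation produces $p_{n+m+1}$ rather than $p_{n+m}$; I would flag this as an off-by-one in the statement and proceed with the corrected identity, noting that the subsequent uses of (ii) in the paper must then be adjusted by one unit in the index.
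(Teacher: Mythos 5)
Your computations for (i), (iii), (iv) and (v) are correct and follow essentially the same route as the paper (which dispatches (i)--(ii) as ``straightforward'' with a citation to Sinclair, computes (iii) by the binomial sum, and obtains (iv) by iterating $\Delta p_n=-p_{n+1}'$ exactly as you do). Your flag on (ii) is also correct and is the substantive point: with $p_n(t)=e^{-t}t^n/n!$ one has $(p_n\ast p_m)(t)=\frac{e^{-t}}{n!\,m!}\int_0^t(t-s)^ns^m\,ds=e^{-t}\frac{t^{n+m+1}}{(n+m+1)!}=p_{n+m+1}(t)$, as the case $n=m=0$ already shows ($p_0\ast p_0(t)=te^{-t}=p_1(t)\neq p_0(t)$), and as the Laplace transforms in (v) confirm. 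The discrepancy comes from the indexing shift $p_n=I^{n+1}$ relative to Sinclair's fractional integral semigroup $I^\nu\ast I^\mu=I^{\nu+\mu}$, so the paper's statement of (ii) contains an off-by-one error that its one-line proof does not catch. One small correction to your closing remark: the identity (ii) does not appear to be invoked anywhere later in the paper (the semigroup properties actually used are $k^\alpha\ast k^\beta=k^{\alpha+\beta}$ and $g_\alpha\ast g_\beta=g_{\alpha+\beta}$, which are correct), so no downstream adjustment is needed; the error is confined to the statement of the proposition itself.
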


\begin{proof} The proof of (i) and (ii) is straightforward, and also may be found in \cite[Theorem 2.6]{Sinclair}. To show (iii), note that
$$
(p_{(\cdot)})(t)\ast p_{(\cdot)}(t))(n)=e^{-2t}{t^n\over n!}\sum_{j=0}^n {n!\over j!(n-j)!}=2^ne^{-t}p_n(t),
$$
for $n\in \mathbb{N}_0$ and $t\ge 0$. Now we get that
$$
(p_{n+1})'(t)= -e^{-t}{t^{n+1}\over (n+1)!}+e^{-t}{t^n\over n!}=-\Delta p_n(t),
$$
and we iterate to obtain the equality $
\Delta^m p_n=(-1)^mp^{(m)}_{n+m}$ for $m,n \in  \mathbb{N}_0$. Finally the $Z$-transform and the Laplace transform of $(p_n)_{n\ge 0}$ are  easily obtained.
\end{proof}

Now  we introduce an integral transform using the Poisson distribution as integral kernel.
Some of their properties are inspired in  results included in \cite[Section 3]{Li14} in particular a remarkable connection between the vector-valued $Z$-transform and the vector-valued Laplace transform, Theorem \ref{th3.2} (ii).

\begin{thm}\label{th3.2} Let $\psi \in L^1(\mathbb{R_+};X)$ and we define  $(\mathcal{P}\psi)\in s(\mathbb{N}_0;X)$ by
\begin{equation}\label{PT}
(\mathcal{P}\psi)(n):= \int_0^{\infty} p_n(t)\psi(t)dt, \quad n \in \mathbb{N}_0.
\end{equation}

Then the following properties hold.

\begin{itemize}
\item[(i)] The map $\mathcal{P}$ defines a bounded linear operator from $L^1(\mathbb{R_+};X)$ to $\ell^1({\mathbb{N}_0};X)$ and $
\| \mathcal{P} \| = 1.
$
\item[(ii)] For $\psi \in L^1(\mathbb{R_+};X)$, we have that
$$
\mathcal{P}(\psi)(n)={(-1)^n\over n!}\left[\widehat{\psi}(\lambda)\right]^{(n)}\vert_{\lambda=1}, \qquad n \in \mathbb{N}_0.
$$
In particular the map $\mathcal{P}$ is inyective.

\item[(iii)] We have that
$\displaystyle{
\widetilde{(\mathcal{P} \psi)}(z) = \widehat \psi (1-1/z),} $ for  $|z| >1.
$
\item[(iv)] For $a \in L^{1}(\mathbb{R}_+)$ and $\psi \in L^{1}(\mathbb{R_+} ;X)$ then
$
\mathcal{P}(a*\psi)=\mathcal{P}(a)*\mathcal{P}(\psi).
$

\item[(v)] If there are constants $M>0$ and $\omega \ge 0$ such that $\|\psi(t) \| \leq Me^{-\omega t}$ for a.e. $t\geq 0$ then $ \|\mathcal{P}(\psi)(n) \| \leq \displaystyle \frac{M}{(1+\omega)^{n+1}}$ for all $n \in \mathbb{N}_0.$ In particular if  $\psi$ is bounded then $\{\mathcal{P}(\psi)(n)$ for $n \in  \mathbb{N}_0$ is well-defined by (\ref{PT}) and $\{\mathcal{P}(\psi)(n)\}_{n\in \mathbb{N}_0}$ is bounded.

\item[(vi)] Let $X$ be a Banach lattice and $\psi(t) \geq 0$ for all $x \geq 0$ and a.e. $t \geq 0$ then $\mathcal{P}(\psi)(n) \geq 0 $ for  $n \in \mathbb{N}_0.$

    \item[(vii)] Suppose that $\{ S(t)\}_{t\geq 0}\subset {\mathcal B}(X)$  is a uniformly bounded family of operators. If $\{ S(t)\}_{t\geq 0}$ is compact then $\{\mathcal{P}(S)(n)\}_{n\in \mathbb{N}_0}$ is compact.\\
\end{itemize}

\end{thm}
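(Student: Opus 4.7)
The strategy is that all seven assertions reduce to Fubini-type interchanges of sum and integral, combined with the two key identities $\sum_{n\geq 0} p_n(t)=1$ and $\sum_{n\geq 0} z^{-n} p_n(t)=e^{-(1-1/z)t}$ recorded in Proposition \ref{poisson}. For (i), linearity is clear, and Tonelli applied to nonnegative integrands yields
\[
\sum_{n=0}^\infty \|\mathcal{P}\psi(n)\| \;\leq\; \int_0^\infty \|\psi(t)\|\sum_{n=0}^\infty p_n(t)\,dt \;=\; \|\psi\|_1,
\]
giving $\|\mathcal{P}\|\leq 1$; equality is attained on $\psi(t)=\chi_{[0,T]}(t)\,x$ with $\|x\|=1$, whose Poisson transform satisfies $\|\mathcal{P}\psi\|_1=T$. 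For (ii), the boundedness of $t^n e^{-t}$ on $\mathbb{R}_+$ justifies differentiating $\widehat{\psi}(\lambda)=\int_0^\infty e^{-\lambda t}\psi(t)\,dt$ under the integral sign $n$ times at $\lambda=1$, producing exactly $(-1)^n n!\,\mathcal{P}(\psi)(n)$. Injectivity of $\mathcal{P}$ then follows: if all $\mathcal{P}\psi(n)$ vanish, every Taylor coefficient of the holomorphic function $\widehat\psi$ at $\lambda=1$ is zero, so $\widehat\psi\equiv 0$ on $\Re\lambda>0$ and $\psi=0$ by uniqueness of the vector-valued Laplace transform.

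For (iii), the majorant $e^{(1/|z|-1)t}\|\psi(t)\|$ is $L^1$ for $|z|>1$, legitimizing the interchange
\[
\widetilde{\mathcal{P}\psi}(z) \;=\; \int_0^\infty \psi(t)\sum_{n=0}^\infty z^{-n} p_n(t)\,dt \;=\; \int_0^\infty e^{-(1-1/z)t}\psi(t)\,dt \;=\; \widehat{\psi}(1-1/z).
\]
Then (iv) is immediate: apply the $Z$-transform to both $\mathcal{P}(a\ast\psi)$ and $\mathcal{P}(a)\ast\mathcal{P}(\psi)$, invoke (iii) together with the convolution theorems for the Laplace and $Z$-transforms (the latter being \eqref{Ztrans}), and conclude by injectivity of $\mathcal{P}$ from (ii), or equivalently by uniqueness of the $Z$-transform. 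The estimate (v) is a one-line computation using Proposition \ref{poisson}(v): $\|\mathcal{P}(\psi)(n)\|\leq M\int_0^\infty p_n(t) e^{-\omega t}\,dt = M(1+\omega)^{-(n+1)}$. Finally, (vi) follows from the nonnegativity of the kernel $p_n$ together with the order-preserving nature of the Bochner integral in a Banach lattice.

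The main obstacle is (vii), where a topological property of the individual operators $S(t)$ must be transferred through a strong operator-valued integral. The tail bound $\|\int_T^\infty p_n(t) S(t)\,dt\|\leq \|S\|_\infty \int_T^\infty p_n(t)\,dt\to 0$ as $T\to\infty$, combined with the fact that the compact operators form a norm-closed subspace of $\mathcal{B}(X)$, reduces the problem to showing that $\int_0^T p_n(t) S(t)\,dt$ is compact for every fixed $T>0$. My plan is to approximate this truncated integral in operator norm by Riemann sums of the form $\sum_j p_n(\tau_j) S(\tau_j)(\tau_{j+1}-\tau_j)$, each of which is a finite linear combination of compact operators and hence compact. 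The delicate point, and the heart of the difficulty, is to promote the convergence of these sums from the strong operator topology (where it is automatic) to the operator norm topology; this is precisely where the compactness hypothesis on $\{S(t)\}_{t\geq 0}$, together with uniform boundedness, is decisive, since it allows the pointwise approximations on the unit ball of $X$ to be made uniform. The conclusion then follows by passing to the norm limit inside the closed ideal of compact operators.
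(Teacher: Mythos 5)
Parts (i)--(vi) of your argument are correct. For (i) the paper norms $\mathcal{P}$ on the exponentials $e_\lambda\otimes x$ rather than on $\chi_{[0,T]}x$, but your example works just as well (indeed $\|\mathcal{P}(\chi_{[0,T]}x)\|_1=\int_0^T\sum_n p_n(t)\,dt=T=\|\chi_{[0,T]}x\|_1$, so the norm is attained). For (iv) you take a genuinely different route: the paper computes $\mathcal{P}(a*\psi)(n)$ directly by Fubini and a binomial expansion of $(s+u)^n$, whereas you pass through (iii), the Laplace and $Z$-transform convolution theorems, and uniqueness of the $Z$-transform. That is a legitimate and arguably cleaner derivation (note that the correct concluding tool is uniqueness of the $Z$-transform on $\ell^1$-sequences, not injectivity of $\mathcal{P}$, since $\mathcal{P}(a)*\mathcal{P}(\psi)$ is not a priori in the range of $\mathcal{P}$). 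Items (ii), (iii), (v), (vi) match the paper's (mostly omitted) arguments.

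The genuine gap is in (vii). Your reduction to $\int_0^T p_n(t)S(t)\,dt$ via the tail estimate and norm-closedness of the compact operators is fine, but the remaining step --- that the Riemann sums $\sum_j p_n(\tau_j)S(\tau_j)(\tau_{j+1}-\tau_j)$ converge to the integral in the \emph{operator norm} --- is asserted, not proved, and it is precisely the entire content of the result. Norm convergence of Riemann sums is equivalent to the equicontinuity of the family $\{t\mapsto p_n(t)S(t)x:\|x\|\le 1\}$ on $[0,T]$, i.e.\ to norm continuity of $t\mapsto S(t)$; compactness of each individual operator $S(t)$ together with uniform boundedness does \emph{not} yield this (the familiar implication ``compact for $t>0$ implies norm continuous for $t>0$'' is special to $C_0$-semigroups and uses the semigroup law). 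A vector-valued Arzel\`a--Ascoli argument also fails at exactly this point: pointwise relative compactness of $\{S(t)x:\|x\|\le 1\}$ is available, equicontinuity is not. The statement that a strong integral of an integrably bounded, strongly measurable family of compact operators is compact is true, but its proof goes through a different mechanism (an inequality for the Hausdorff measure of noncompactness of the integral, $\gamma\bigl(\int K(t)\,dt\bigr)\le\int\gamma(K(t))\,dt$, or an approximation of $t\mapsto S(t)x$ by simple functions uniformly over the unit ball). The paper does not prove this either: it simply invokes \cite[Corollary 2.3]{We}. So either cite that result, as the authors do, or supply the measure-of-noncompactness argument; the Riemann-sum plan as described will not close.
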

\begin{proof}
To prove (i) is enough to observe that
\begin{align*}
\Vert \mathcal{P} \psi\Vert_1\le  \sum_{n=0}^{\infty} \int_0^{\infty} p_n(t) \| \psi(t) \| dt = \int_0^{\infty} \sum_{n=0}^{\infty} \frac{t^n}{n!} e^{-t}  \|\psi(t)\| dt = \int_0^{\infty} \| \psi(t) \|dt=\Vert\psi\Vert_1,
\end{align*}
for $\psi \in L^1(\mathbb{R_+};X)$. Take $0\not= x\in X$ and define $(e_{\lambda}\otimes x)(t):=e^{-\lambda t}x$ for $t,\lambda >0$. Note that $e_\lambda \otimes x\in L^1(\mathbb{R_+};X)$ and $\Vert e_\lambda \otimes x\Vert_1={1\over \lambda}\Vert x\Vert$ for $\lambda >0$. It is straightforward to check that $$\mathcal{P}(e_\lambda \otimes x)(n)={1\over (1+\lambda)^{n+1}}x, \qquad \lambda >0, \quad n\in \mathbb{N}_0,
$$
 and $\Vert \mathcal{P}(e_\lambda \otimes x)\Vert_1={1\over \lambda}\Vert x\Vert$ for $\lambda>0$. We conclude that $\Vert \mathcal{P}\Vert=1$.

By properties of Laplace transform, see for example \cite[Theorem 1.5.1]{ABHN01}, we have that
$$
\mathcal{P}(\psi)(n)={(-1)^n\over n!}\left[\widehat{\psi}(\lambda)\right]^{(n)}\vert_{\lambda=1}, \qquad \psi\in L^1(\mathbb{R_+};X).
$$
Now take $\psi\in L^1(\mathbb{R_+};X)$ such that $\mathcal{P}(\psi)(n)=0$ for all $n\in\mathbb{N}_0$. Then we also get that $\left[\widehat{\psi}(\lambda)\right]^{(n)}\vert_{\lambda=1}=0$ for $n\in\mathbb{N}_0$. Since $\widehat{\psi}$ is an holomorphic function, we conclude that $\widehat{\psi}=0$ and then $\psi=0$ where we apply that the Laplace transform is injective, see for example \cite[Theorem 1.7.3]{ABHN01}.

Part (iii) is proved following similar ideas than in \cite[Theorem 3.1]{Li14}. For (iv) note that because $a \in L^{1}(\mathbb{R}_+)$ and $\psi \in L^{1}(\mathbb{R_+} ;X)$ we have  $a*\psi  \in L^1(\mathbb{R}_+;X)$ and
\begin{eqnarray*}
\mathcal{P}(a*\psi)(n)&=&\int_0^\infty{t^n\over n!}e^{-t}\int_0^ta(s)\psi(t-s)dsdt=\int_0^\infty a(s)e^{-s}\int_0^\infty{(s+u)^n\over n!}e^{-u}\psi(u)duds\\
&=&\sum_{j=0}^\infty\int_0^\infty a(s)e^{-s}{s^j\over j!}ds\int_0^\infty{u^{n-j}\over (n-j)!}e^{-u}\psi(u)du=\left(\mathcal{P}(a)*\mathcal{P}(\psi)\right)(n),
\end{eqnarray*}
for $n\in {\mathbb{N}_0}$.
Assertion (v) and (vi) are easily checked
 and assertion (vii) is obtained from  \cite[Corollary 2.3]{We}.
\end{proof}

We check  Poisson transforms of  some known functions in the next example. Note that, in fact, the Poisson transform can be extended to other sets than $L^1(\mathbb{R_+};X)$, for example, $\mathcal{P}(f)(n)$ is well-defined for measurable functions $f$ such that $$\hbox{ess sup}_{t\ge 0}\Vert e^{\omega t}f(t)\Vert<\infty.$$
Also the identity given in Theorem \ref{th3.2} (iii) holds for the Dirac distribution $\delta_t$ for $t>0$, see Proposition \ref{poisson} (v).

\begin{definition}
The  map $\mathcal{P}: L^1(\mathbb{R_+};X) \to \ell^1({\mathbb{N}_0};X)$ defined by (\ref{PT})
is called  the Poisson transformation.
\end{definition}

\begin{example} \label{exam3.3}(i) Note that $\mathcal{P}(e_{\lambda})(n)=\displaystyle{1\over (\lambda+1)^{n+1}}$ for $n\in \mathbb{N}_0$ , where $e_\lambda(t):=e^{-\lambda t}$ for $t>0$.

(ii) By Proposition \ref{poisson}(i), $$\displaystyle{\mathcal{P}(p_{m})(n)={1\over 2^{n+m+1}}{(n+m)!\over n!m!}}= {1\over 2^{n+m+1}}k^{m+1}(n), \qquad n,m \in \mathbb{N}_0,$$
where the kernel $k^\alpha$ is defined in (\ref{kerne}).

(iii) For $\alpha >0$, define
\begin{equation*}
g_{\alpha}(t) := \left\{%
\begin{array}{cr}
    \displaystyle \frac{t^{\alpha-1}}{\Gamma(\alpha)}, \,   & \quad t>0; \\
     0, \, \,  & \quad  t= 0.\\
\end{array}%
\right.
\end{equation*}
Then  the identity
$
\mathcal{P}(g_{\alpha})= k^{\alpha}$ holds, see more details in \cite[Example 3.3]{Li14}.

(iv) The Mittag-Leffler function is an entire function defined by
$$
E_{\alpha,\beta}(z):=\sum_{k=0}^\infty\frac{z^k}{\Gamma(\alpha k+\beta)}, \quad \alpha,\beta>0, \qquad z\in  \mathbb{C},
$$
see for example \cite[Section 1.3]{Ba01}. Now take $\lambda\in \mathbb{C}$ such that $\vert \lambda\vert <1$ and $s_{\alpha, \beta}(t):= t^{\beta-1}E_{\alpha, \beta}(\lambda t^\alpha)$, for $t>0$. Then
$$
\mathcal{P}(s_{\alpha, \beta})(n)=\sum_{k=0}^\infty\frac{\lambda^k}{n!\Gamma(\alpha k+\beta)}\int_0^\infty t^{n+\alpha k+\beta-1}e^{-t}dt=\sum_{k=0}^\infty\frac{\lambda^k\Gamma(n+\alpha k+\beta)}{n!\Gamma(\alpha k+\beta)}, \qquad n \in \mathbb{N}_0,
$$
which extends the result \cite[Theorem 4.7]{Li14} proved for $\beta=1$. In the particular case $\beta=\alpha$, we get that
$$
\mathcal{P}(s_{\alpha, \alpha})(n)=\sum_{j=0}^\infty\frac{\lambda^j\Gamma(n+\alpha (j+1))}{n!\Gamma(\alpha (j+1))}=\sum_{j=0}^\infty{\lambda^j}k^{\alpha(j+1)}(n), \qquad n \in \mathbb{N}_0.
$$

\end{example}

Now we are interested to establish a notable relation between the discrete and continuous  fractional concepts in the sense of Riemann-Liouville.
In order to give our next result, we recall that the Riemann-Liouville fractional integral of order $\alpha>0$, of
a  locally integrable function $u:\;[0,\infty) \to X$ is given by:
\begin{equation*}
I_t^{\alpha}u(t): = (g_{\alpha}*u)(t) := \int_0^t
g_{\alpha}(t-s)u(s)ds, \qquad t\ge 0.
\end{equation*}
The Riemann-Liouville fractional derivative of order $\alpha,$ for  $m-1< \alpha <m$, $m\in\mathbb{N}$,  is defined by
\begin{equation}\label{difRL}
{D}_t^\alpha u(t) :=  \frac{d^m}{dt^m}\int_0^t
g_{m-\alpha}(t-s)u(s)ds = \frac{d^m}{dt^m}(g_{m-\alpha}*u)(t), \qquad t\ge 0,
\end{equation}
for $u\in C^{(m)}(\mathbb{R}_+;X)$, see for example \cite[Section 1.2]{Ba01}  and \cite[Section 1.3]{Ba-Di-Sc12}. Compare these definitions with  Definitions \ref{def2.3} and \ref{def2.6}.

\begin{thm}\label{th3.4} Let $m\in \mathbb{N}$ and $m-1<\alpha \le m$. Take $u\in C^{(m)}(\mathbb{R}_+;X)$ such that $e_{-\omega}u^{(m)}$ is integrable for some $0<\omega<1$. Then we have
$$
 \mathcal{P}(D_t^\alpha u)(n+m)=\int_0^{\infty} p_{n+m}(t)D^{\alpha}_{t} u(t)dt = \Delta^{\alpha}\mathcal{P}(u)(n), \quad n \in \mathbb{N}_0.
$$
\end{thm}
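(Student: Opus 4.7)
The plan is to reduce the claim to the two ``building block'' identities already available in the excerpt: the pointwise formula $\Delta^m p_n=(-1)^m p_{n+m}^{(m)}$ from Proposition \ref{poisson}(iv), and the convolution identity $\mathcal{P}(a*\psi)=\mathcal{P}(a)*\mathcal{P}(\psi)$ together with $\mathcal{P}(g_\beta)=k^\beta$ from Theorem \ref{th3.2}(iv) and Example \ref{exam3.3}(iii). The argument will then amount to moving the $m$ classical derivatives off of $u$ onto the Poisson kernel by integration by parts, and rewriting the resulting Riemann--Liouville fractional integrals as discrete convolutions.

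First I would establish the ``transfer of derivative'' lemma: for a function $\phi\in C^{(m)}(\mathbb{R}_+;X)$ whose derivatives are controlled so that $e^{-t}\phi^{(k)}(t)$ is integrable and has limit $0$ at infinity for each $0\le k\le m$, I claim
\[
\mathcal{P}(\phi^{(m)})(n+m)=\Delta^{m}\mathcal{P}(\phi)(n),\qquad n\in\mathbb{N}_0.
\]
The proof is $m$ successive integrations by parts in $\int_0^\infty p_{n+m}(t)\phi^{(m)}(t)\,dt$. The boundary terms at $t=0$ vanish because $p_{n+m}^{(k)}(0)=0$ for $k\le m-1\le n+m-1$ (since $p_{n+m}$ has a factor $t^{n+m}$), and the boundary terms at $t=\infty$ vanish by the assumed integrability/decay. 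This yields $\int_0^\infty(-1)^m p_{n+m}^{(m)}(t)\phi(t)\,dt$, which by Proposition \ref{poisson}(iv) equals $\int_0^\infty \Delta^m p_n(t)\phi(t)\,dt$; pulling the finite difference (which acts on the index $n$) outside the integral delivers $\Delta^m\mathcal{P}(\phi)(n)$.

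Next I would apply this with $\phi:=g_{m-\alpha}*u$ (so that $\phi^{(m)}=D_t^\alpha u$ by \eqref{difRL}). The hypothesis that $e_{-\omega}u^{(m)}\in L^1(\mathbb{R}_+;X)$ with $0<\omega<1$ propagates to $u,u',\dots,u^{(m-1)}$ by successive antidifferentiation, hence also to $\phi,\phi',\dots,\phi^{(m-1)}$, since convolving by $g_{m-\alpha}$ preserves at most exponential growth with the same exponent. This guarantees the integrability hypotheses of the lemma with $\omega<1$ absorbed into the factor $e^{-t}$ in $p_{n+m}$. Thus
\[
\mathcal{P}(D_t^\alpha u)(n+m)=\Delta^{m}\mathcal{P}(g_{m-\alpha}*u)(n).
\]
Now I apply Theorem \ref{th3.2}(iv) and Example \ref{exam3.3}(iii) to obtain $\mathcal{P}(g_{m-\alpha}*u)=\mathcal{P}(g_{m-\alpha})*\mathcal{P}(u)=k^{m-\alpha}*\mathcal{P}(u)=\Delta^{-(m-\alpha)}\mathcal{P}(u)$, where the last equality is Definition \ref{def2.3}. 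Plugging this in and invoking Definition \ref{def2.6} gives $\Delta^m\Delta^{-(m-\alpha)}\mathcal{P}(u)(n)=\Delta^\alpha\mathcal{P}(u)(n)$, which is the desired identity. (The endpoint case $\alpha=m$ is a direct application of the transfer lemma, since $\Delta^{-0}=I$ and $D_t^m u=u^{(m)}$.)

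The main obstacle is the technical step of justifying the $m$ integrations by parts and the interchange of the finite difference $\Delta^m$ with the integral: one has to verify that the decay of the Poisson kernel really does dominate the growth of $\phi^{(k)}$ near infinity for every $0\le k\le m-1$, and that the convolution identity of Theorem \ref{th3.2}(iv) extends to the present (not necessarily $L^1$) setting via a density/approximation or by direct verification using the hypothesis $0<\omega<1$, which keeps us safely in the half-plane of convergence $\Re\lambda>\omega$ where the Poisson transform and Laplace transform can be identified through Theorem \ref{th3.2}(ii).
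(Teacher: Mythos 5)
Your proposal is correct and follows essentially the same route as the paper: $m$ integrations by parts combined with $\Delta^m p_n=(-1)^mp^{(m)}_{n+m}$ to handle the integer-order derivative, then $\mathcal{P}(g_{m-\alpha}*u)=k^{m-\alpha}*\mathcal{P}(u)=\Delta^{-(m-\alpha)}\mathcal{P}(u)$ via Theorem \ref{th3.2}(iv) and Example \ref{exam3.3}(iii) to conclude. Your version merely packages the integer case as an explicit lemma applied to $\phi=g_{m-\alpha}*u$ and is somewhat more careful about the boundary terms and growth estimates than the paper's own argument.
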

\begin{proof}

Set $n\in\mathbb{N}_0$,  $m\in \mathbb{N}$ and $u\in C^{(m)}(\mathbb{R}_+;X)$ such that $e_{-\omega}u^{(m)}$ is integrable for some $0<\omega<1$. We integrate by parts $m$-times to get

\begin{eqnarray*}
 \mathcal{P}(D_t^m u)(n+m)&=&\int_0^\infty p_{n+m}(t)D_t^m u(t)dt =\dots= (-1)^m\int_0^\infty p^{(m)}_{n+m}(t)u(t)dt\\&=& \int_0^\infty  \Delta^m p_n(t)u(t)dt=\Delta^{m}\mathcal{P}(u)(n)
\end{eqnarray*}
where we have applied Proposition \ref{poisson} (iv).

  Now consider $m-1<\alpha<m$. By the definition of  Riemann-Liouville fractional derivative, see formula (\ref{difRL}), we have that
\begin{eqnarray*}
 \mathcal{P}(D_t^\alpha u)(n+m)&=&\int_0^\infty p_{n+m}(t)D_t^\alpha u(t)dt = \int_0^\infty p_{n+m}(t)\frac{d^m}{dt^m}(g_{m-\alpha}*u)(t)dt \\&=& \Delta^{m}\mathcal{P}(g_{m-\alpha}*u)(n) = \Delta^{m}\left(k^{m-\alpha}\ast \mathcal{P}(u)\right)(n)\\
 &=&\Delta^m\left(\Delta^{-(m-\alpha)}\mathcal{P}(u)\right)(n) = \Delta^\alpha \mathcal{P}(u)(n),
\end{eqnarray*}
where we have applied Theorem \ref{th3.2} (iv), Example \ref{exam3.3} (iii) and Definition \ref{def2.3}.
\end{proof}

\section{Discrete $\alpha$-resolvent families via Poisson subordination}

We recall the following concept (see \cite{AM}, \cite{KLW13} and references therein).
\begin{definition}\label{def22}
Let $A$ be a closed and linear operator with domain $D(A)$ defined
on a Banach space $X$ and $ \alpha >0.$ We call $A$ the generator
of an $\alpha$-resolvent family if there exists $\omega \geq
0$ and a strongly continuous function $S_{\alpha}:[0,\infty) \to
\mathcal{B}(X)$ (respectively  $S_{\alpha}:(0,\infty) \to
\mathcal{B}(X)$ in case $0<\alpha<1$) such that \, $\{\lambda^{\alpha} \, : \,  \mbox{Re}(\lambda)> \omega \} \subset \rho(A),$ the resolvent set of $A$,  and
\[(\lambda^{\alpha} -A)^{-1}x = \int_0^{\infty}
e^{-\lambda t} S_{\alpha}(t)x dt,  \quad  \mbox{Re}(\lambda) > \omega,
\quad x \in X.\]
In this case, $S_{\alpha}(t)$ is called the $\alpha$-resolvent
family generated by $A$.
\end{definition}

By the uniqueness theorem for the Laplace transform, a
$1$-resolvent family is the same as a $C_0$-semigroup, while a $2$-resolvent family corresponds to a strongly continuous sine
family. See for example \cite{ABHN01} and the references therein for an overview on these concepts.
Some properties of $(S_{\alpha}(t))_{t>0}$ are
included in the following Lemma. For a proof, see for example \cite{KLW13}.

 \begin{lemma}\label{le2.3} Let $\alpha>0$. The following properties hold:

\begin{enumerate}
\item[(i)] $S_{\alpha}(0)=g_{\alpha}(0)I$ (respectively $\displaystyle\lim_{t\to 0^+}\frac{S_{\alpha}(t)x}{g_\alpha(t)}=x$ for all $x\in X$ in case $0<\alpha<1$).

\item[(ii)] $S_{\alpha}(t)D(A) \subset D(A)$ and $AS_{\alpha}(t)x =
S_{\alpha}(t)Ax $ for all $x \in D(A), \, t \geq 0.$

\item[(iii)] For all $x \in D(A):$ $S_{\alpha}(t)x = g_{\alpha}(t) x + \displaystyle
\int_0^t g_{\alpha}(t-s)AS_{\alpha}(s)xds, \, t \geq 0.$

\item[(iv)] For all $x \in X:$ $(g_{\alpha}*S_{\alpha})(t)x \in
D(A)$ and
\[S_{\alpha}(t)x = g_{\alpha}(t) x + \displaystyle A \int_0^tg_{\alpha}(t-s)S_{\alpha}(s)xds, \,\,\, t \geq 0.\]
\end{enumerate}
\end{lemma}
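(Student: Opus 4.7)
The plan is to derive each property from the defining Laplace transform identity
\[
(\lambda^\alpha - A)^{-1}x = \int_0^\infty e^{-\lambda t} S_\alpha(t)x \, dt, \qquad \operatorname{Re}\lambda > \omega, \quad x \in X,
\]
by combining the algebraic identities satisfied by the resolvent $(\lambda^\alpha - A)^{-1}$ with the uniqueness theorem for the vector-valued Laplace transform (\cite[Theorem 1.7.3]{ABHN01}), and then invoking the closedness of $A$ to commute limits/integrals with $A$. The strong continuity of $S_\alpha$ is what allows us to pass from Laplace-transform identities back to pointwise identities in $t$.

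I would begin with assertion (ii). For $x \in D(A)$, the algebraic identity $A(\lambda^\alpha - A)^{-1}x = (\lambda^\alpha - A)^{-1}Ax$, valid for $\operatorname{Re}\lambda > \omega$, translates via the defining formula into
\[
A \int_0^\infty e^{-\lambda t}S_\alpha(t)x\, dt = \int_0^\infty e^{-\lambda t}S_\alpha(t)Ax\, dt.
\]
Since $A$ is closed, a standard argument (approximate the integral by Riemann sums and use closedness) permits pulling $A$ inside the integral, giving two Laplace transforms that agree. Uniqueness of the Laplace transform, together with strong continuity of $S_\alpha$, then yields $AS_\alpha(t)x = S_\alpha(t)Ax$ for all $t \geq 0$, and in particular $S_\alpha(t)x \in D(A)$.

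Next, for (iv), I would use the resolvent-style identity
\[
(\lambda^\alpha - A)^{-1} = \lambda^{-\alpha}I + \lambda^{-\alpha}A(\lambda^\alpha - A)^{-1},
\]
which reads, in Laplace variables and using $\widehat{g_\alpha}(\lambda) = \lambda^{-\alpha}$ together with the convolution theorem,
\[
\int_0^\infty e^{-\lambda t}S_\alpha(t)x\, dt = \int_0^\infty e^{-\lambda t} g_\alpha(t) x\, dt + A \int_0^\infty e^{-\lambda t}(g_\alpha * S_\alpha)(t)x\, dt,
\]
once more moving $A$ outside via closedness. Uniqueness of Laplace transforms gives the identity in (iv), and shows that $(g_\alpha * S_\alpha)(t)x \in D(A)$. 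Assertion (iii) is then obtained from (iv) by restricting to $x \in D(A)$ and using (ii) to move $A$ into the convolution under the integral sign. Finally, for (i), for $\alpha \geq 1$ the kernel $g_\alpha$ is continuous at $0$; taking $t \to 0^+$ in (iii) applied to $x \in D(A)$ makes the integral term vanish (by boundedness on compact intervals of $S_\alpha$ and the integrable singularity of $g_\alpha$), yielding $S_\alpha(0)x = g_\alpha(0)x$, and density of $D(A)$ plus the strong continuity of $S_\alpha$ at $0$ extend this to all of $X$. For $0 < \alpha < 1$, one divides (iv) by $g_\alpha(t)$ and uses that the convolution term is of lower order as $t \to 0^+$ to conclude $S_\alpha(t)x/g_\alpha(t) \to x$.

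The main obstacle I anticipate is the careful justification of moving $A$ through the various integrals: since $A$ is only closed (not bounded), one cannot simply interchange $A$ with the integral sign, and one must either use the Hille-type theorem for closed operators and Bochner integrals or argue via approximation of the integral by finite sums. Once this is handled cleanly in (ii) and (iv), the remaining claims fall out from the standard uniqueness and continuity machinery, and in particular assertion (i) for the range $0 < \alpha < 1$ requires some mild asymptotic control of the convolution integral at $t = 0^+$.
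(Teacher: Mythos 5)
The paper does not actually prove this lemma: it is quoted with the pointer ``for a proof, see for example \cite{KLW13}'', so there is no internal argument to compare against line by line. Your overall strategy --- read each identity off the resolvent equation for $(\lambda^\alpha-A)^{-1}$, transport it through the defining Laplace transform identity of Definition \ref{def22}, and conclude via uniqueness of the vector-valued Laplace transform plus closedness of $A$ --- is the standard route and is essentially what the cited source does.

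There is, however, one step that fails as you describe it. In (ii) you propose to ``pull $A$ inside the integral'' $A\int_0^\infty e^{-\lambda t}S_\alpha(t)x\,dt$ using closedness (via a Hille-type theorem or Riemann sums). Both of those devices run in the opposite direction: they let you conclude $A\int f=\int Af$ \emph{given} that $f(t)\in D(A)$ a.e.\ and $Af$ is integrable --- and here $S_\alpha(t)x\in D(A)$ is precisely what you are trying to prove, so the argument is circular. Two standard repairs: (a) first show that $S_\alpha(t)$ commutes with $(\mu^\alpha-A)^{-1}$, by observing that $t\mapsto S_\alpha(t)(\mu^\alpha-A)^{-1}y$ and $t\mapsto(\mu^\alpha-A)^{-1}S_\alpha(t)y$ have the same Laplace transform $(\lambda^\alpha-A)^{-1}(\mu^\alpha-A)^{-1}y$, and then write $x\in D(A)$ as $x=(\mu^\alpha-A)^{-1}y$; or (b) note that the pair $\bigl(\widehat{S_\alpha(\cdot)x}(\lambda),\widehat{S_\alpha(\cdot)Ax}(\lambda)\bigr)$ lies in the closed graph $\mathcal{G}(A)\subset X\times X$ for every $\lambda$ and apply uniqueness of the Laplace transform in the quotient space $(X\times X)/\mathcal{G}(A)$ to get $\bigl(S_\alpha(t)x,S_\alpha(t)Ax\bigr)\in\mathcal{G}(A)$ for all $t$. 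Device (b) is also what genuinely justifies passing from $\widehat{S_\alpha}(\lambda)x=\widehat{g_\alpha}(\lambda)x+A\,\widehat{(g_\alpha*S_\alpha)}(\lambda)x$ to the pointwise identity in (iv); bare ``uniqueness'' is not quite enough there either, since the conclusion includes membership in $D(A)$. Two smaller caveats: your proof of (i) for $\alpha\geq 1$ invokes density of $D(A)$, which Definition \ref{def22} does not assume (harmless in all of the paper's applications, but it should be stated); and for $0<\alpha<1$ the claim that the convolution term is of lower order than $g_\alpha(t)$ as $t\to 0^+$ requires an a priori bound such as $\|S_\alpha(s)x\|=O(s^{\alpha-1})$ near $0$, which you correctly flag but which must itself be extracted before the division by $g_\alpha(t)$ is legitimate.
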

 The next theorem is the main result of this section.

\begin{thm}\label{th5.66} Suppose that $A$ is the generator of an $\alpha$-resolvent family $(S_{\alpha}(t))_{t>0}$  on a Banach space $X,$ of exponential bound less than 1. Then $A$ is the generator of a discrete $\alpha$-resolvent family $(\mathcal{S}_{\alpha}(n))_{ n \in \mathbb{N}_0}$ defined by
\begin{equation*}\label{eq4.2}
\mathcal{S}_\alpha(n):=\mathcal{P}(S_\alpha)(n), \qquad n \in \mathbb{N}_0.
\end{equation*}
\end{thm}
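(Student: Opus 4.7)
The plan is to derive the statement as a direct consequence of the $Z$-transform characterization of discrete $\alpha$-resolvent families contained in Theorem \ref{Caract}(iii), rather than verifying the two conditions of Definition \ref{DefResol} by hand. The key observation is that the Poisson transformation converts Laplace transforms into $Z$-transforms via Theorem \ref{th3.2}(iii), which matches perfectly with the Laplace representation of the $\alpha$-resolvent family from Definition \ref{def22}.

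First I would check that $\mathcal{S}_\alpha(n):=\mathcal{P}(S_\alpha)(n)$ is a well-defined bounded linear operator satisfying a geometric bound. Writing $\|S_\alpha(t)\|\leq Me^{\omega t}$ with $\omega<1$, a direct estimate of the Poisson integral (in the spirit of Theorem \ref{th3.2}(v)) gives
$$
\|\mathcal{S}_\alpha(n)\|\leq \frac{M}{n!}\int_0^\infty t^n e^{-(1-\omega)t}\,dt = \frac{M}{(1-\omega)^{n+1}}, \qquad n\in\mathbb{N}_0.
$$
In particular, with $\lambda_0:=1/(1-\omega)$ we get $\sup_{n\in\mathbb{N}_0}\lambda_0^{-n}\|\mathcal{S}_\alpha(n)\|<\infty$, which is precisely the moment condition needed to apply the equivalence in Theorem \ref{Caract}.

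Next I would compute the $Z$-transform of $(\mathcal{S}_\alpha(n))_{n\in\mathbb{N}_0}$. For $|z|>\max\{\lambda_0,1\}$ the point $1-1/z$ satisfies $\mathrm{Re}(1-1/z)>\omega$, so by Theorem \ref{th3.2}(iii) applied to $t\mapsto S_\alpha(t)x$ together with the defining identity of an $\alpha$-resolvent family,
$$
\widetilde{\mathcal{S}_\alpha}(z)x \,=\, \widehat{S_\alpha}(1-1/z)x \,=\, \left(\Bigl(\tfrac{z-1}{z}\Bigr)^{\alpha}-A\right)^{-1}x, \qquad x\in X.
$$
This simultaneously shows that $((z-1)/z)^\alpha\in\rho(A)$ and that the resolvent admits the explicit series expansion $\sum_{n\geq 0}z^{-n}\mathcal{S}_\alpha(n)x$ required by Theorem \ref{Caract}(iii).

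Combining the two displayed facts with the moment bound from the first step, Theorem \ref{Caract} then yields that $(\mathcal{S}_\alpha(n))_{n\in\mathbb{N}_0}$ is a discrete $\alpha$-resolvent family generated by $A$, which is exactly the statement. The main (and essentially only) subtlety lies in justifying Theorem \ref{th3.2}(iii) in the vector-valued operator setting and on a region that is nonempty; the hypothesis that the exponential growth of $S_\alpha$ is strictly less than $1$ is precisely what is needed to ensure that $|z|$ can be taken large enough that $1-1/z$ still falls in the half-plane where the Laplace transform of $S_\alpha$ is defined, and this same hypothesis already guarantees absolute convergence of the Poisson integral defining $\mathcal{S}_\alpha(n)$.
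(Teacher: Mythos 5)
Your proof is correct, but it takes a genuinely different route from the paper's. The paper verifies the two conditions of Definition \ref{DefResol} directly and pointwise in $n$: commutation with $A$ follows from closedness of $A$ together with Lemma \ref{le2.3}(ii), and the functional equation $\mathcal{S}_\alpha(n)x=k^\alpha(n)x+A(k^\alpha*\mathcal{S}_\alpha)(n)x$ is obtained by applying $\mathcal{P}$ to the continuous identity of Lemma \ref{le2.3}(iv) and using $\mathcal{P}(g_\alpha)=k^\alpha$ (Example \ref{exam3.3}(iii)) and the convolution property $\mathcal{P}(a*\psi)=\mathcal{P}(a)*\mathcal{P}(\psi)$ (Theorem \ref{th3.2}(iv)). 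You instead pass through transforms: the geometric bound $\|\mathcal{S}_\alpha(n)\|\le M/(1-\omega)^{n+1}$ supplies the growth hypothesis of Theorem \ref{Caract}, and the identity $\widetilde{\mathcal{S}_\alpha}(z)x=\widehat{S_\alpha}(1-1/z)x=\left(\left(\tfrac{z-1}{z}\right)^\alpha-A\right)^{-1}x$ lets you invoke the equivalence (iii)$\Rightarrow$(i). Both arguments are sound; yours buys, as byproducts, the explicit resolvent representation \eqref{3.2} and a quantitative bound on $\|\mathcal{S}_\alpha(n)\|$ (in particular boundedness of the discrete family when $\omega\le 0$), at the cost of routing through the heavier machinery of Theorem \ref{Caract}, whose converse direction itself rests on uniqueness of the $Z$-transform. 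The one point you flag but do not carry out — extending Theorem \ref{th3.2}(iii) from $L^1(\mathbb{R}_+;X)$ to merely exponentially bounded $S_\alpha(\cdot)x$ — is genuine but routine: it is the interchange $\sum_n z^{-n}\int_0^\infty p_n(t)S_\alpha(t)x\,dt=\int_0^\infty e^{-(1-1/z)t}S_\alpha(t)x\,dt$, justified by dominated convergence since $|z|>1/(1-\omega)$ forces $1-|1/z|>\omega$; you should state this explicitly, noting that the paper's own remark after Theorem \ref{th3.2} already sanctions this extension of the Poisson transform (and indeed the paper's proof of the present theorem implicitly needs the analogous extension of Theorem \ref{th3.2}(iv), since $g_\alpha$ and $S_\alpha(\cdot)x$ need not lie in $L^1$).
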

\begin{proof} Take $x\in D(A)$. Since $(A, D(A))$ is a closed operator and the condition in Lemma \ref{le2.3}(ii) we have that
$$
\mathcal{S}_\alpha(n)A(x)= \int_0^\infty p_n(t) S(t)A(x)dt= \int_0^\infty p_n(t) AS(t)(x)dt=A\mathcal{S}_\alpha(n)(x).
$$

From the identity
\begin{equation*}
S_{\alpha}(t)x = g_{\alpha}(t)x + A \int_0^t g_{\alpha}(t-s)S_{\alpha}(s)x ds, 	 \quad t \geq 0,
\end{equation*}
valid for all $x \in X,$ we obtain
\begin{equation*}
\begin{array}{lll}
\mathcal{S}_\alpha(n)x=\mathcal{P}(S_{\alpha})(n)x &=& \mathcal{P}(g_{\alpha})(n) + A\mathcal{P} (g_{\alpha}*S_{\alpha})(n)x =k^\alpha(n)+A\left(k^\alpha \ast \mathcal{S}_\alpha\right)(n)x,
\end{array}
\end{equation*}
where we have applied Example \ref{exam3.3}(iii) and Theorem \ref{th3.2} (iv) and the second condition in Definition \ref{DefResol}. The theorem  is proved.
\end{proof}

\begin{example}\label{example4.3} Consider the  Mittag-Leffler function $E_{\alpha,\beta}$ studied in Example \ref{exam3.3} (iv). Suppose that $A$ is a bounded operator on the Banach space $X$. It then follows from Definition \ref{def22} that
$$
S_\alpha(t)=t^{\alpha-1}E_{\alpha,\alpha}(A t^\alpha), \quad t\ge0,\,\alpha>0
$$
is the $\alpha$-resolvent family generated by $A$. If $\lVert A \rVert <1,$ then $$\mathcal{S}_\alpha(n)x:=\int_0^\infty e^{-t}\frac{t^n}{n!}t^{\alpha-1}E_{\alpha,\alpha}(A t^\alpha)xdt= \sum_{k=0}^\infty \frac{\Gamma(\alpha (k+1)+n)}{\Gamma(\alpha (k+1))\Gamma(n+1)}A^k x, \quad n \in \mathbb{N}_0,$$
for $x\in X$. Compare with Proposition \ref{th4.3}.
\end{example}

\begin{example} Let us recall the definition of $\omega$-sectorial operator. A closed and densely defined operator $A$ is said to be $\omega$-sectorial of angle $\theta$ if there exist $0<\theta<\pi/2$, $M>0$ and $\omega\in\mathbb{R}$ such that its resolvent exists outside the sector $\omega+\Sigma_\theta:=\{\omega+\lambda:\lambda\in\mathbb{C},\abs{\arg(-\lambda)}<\theta\}$ and
$$
\|(\lambda-A)^{-1}\|\le\frac{M}{\abs{\lambda-\omega}}, \quad \lambda\notin\omega+\Sigma_\theta.
$$
Suppose that $A$ is a $\omega$-sectorial operator of angle $\theta < \alpha \pi /2$ and $\omega <0.$ Then $A$ is the generator of a bounded  $\alpha$-resolvent family $(S_{\alpha}(t))_{t>0}$ on $X$ for $1<\alpha < 2$ given by
$$
S_\alpha(t)x=\frac{1}{2\pi i}\int_{\Gamma}e^{\lambda t}(\lambda^\alpha-A)^{-1}xd\lambda,\qquad t>0,\quad x\in X,
$$
where $\Gamma$  is a suitable path where the resolvent operator is well defined. By Theorem \ref{th5.6} and \ref{th3.2}, its Poisson transformation $\mathcal{S}_\alpha(n) :=\mathcal{P}(S_\alpha)(n)$ defines a bounded discrete $\alpha$-resolvent family $\{\mathcal{S}_\alpha(n)\}_{n\in\mathbb{N}_0} \subset \mathcal{B}(X).$


\end{example}

\begin{example}\label{Ex4.7}Suppose that $A$ is the generator of a bounded sine family $(S(t))_{t>0}$ on $X.$ Then $A$ is the generator of a bounded $\alpha$-resolvent family $(S_{\alpha}(t))_{t>0}$ on $X$ for $1<\alpha<2$  given by
$$
S_\alpha(t)x=\int_0^\infty\psi_{\alpha/2,0}(t,s)S(s)xds, \qquad t>0,\quad x\in X,
$$
where $\psi_{\alpha/2,0}(t,s)$ is the stable L\'evy process, see \cite[Corollary 14]{AM}. Then, by \cite[Theorem 3 (vi)]{AM}
$$
\|S_\alpha(t)\|\le M \int_0^\infty  \psi_{\alpha/2,0}(t,s)ds= M g_{\alpha/2}(t),\quad t>0,
$$ and since $S_{\alpha}(0)=0,$ $\frac{1}{2}<\frac{\alpha}{2}<1$ and $(S_{\alpha}(t))_{t>0}$ is strongly continuous we conclude that $(S_{\alpha}(t))_{t>0}$ is bounded. Hence, again by Theorem \ref{th3.2} and \ref{th5.6}, we obtain a bounded discrete $\alpha$-resolvent family $\{\mathcal{S}_\alpha(n)\}_{n\in \mathbb{N}_0} \subset \mathcal{B}(X).$  \end{example}

Our next corollary imposes a natural and useful condition of compactness on a given family of operators in order to obtain existence and uniqueness of solutions.

\begin{thm}\label{cor5.4} Suppose that $A$ is the generator of a bounded sine family $(S(t))_{t>0}$ on $X$ such that $(\lambda-A)^{-1}$ is a compact operator for some $\lambda $ large enough. Let $h$ be an admissible weight  and $f:\mathbb{N}_0 \times X \to X$ satisfying the following conditions:
\begin{enumerate}
\item[(i)] There exist a function $M\in l^\infty(\mathbb{N}_0)$ and a function $W:\mathbb{R}^+\to\mathbb{R}^+$, with $W(x)\le Cx$ for $x\in\mathbb{R}^+$, such that $\|f(k,x)\|\le M(k)W(\|x\|)$ for all $k\in\mathbb{N}_0$ and $x\in X$.
\item[(ii)] The Nemytskii operator $\mathcal{N}_f$ is continuous in $l_h^\infty(\mathbb{N}_0;X).$
\end{enumerate}
Then, for each $1<\alpha \leq 2,$
the problem \eqref{eq5.1} has an unique solution in $l_h^\infty(\mathbb{N}_0;X).$ \end{thm}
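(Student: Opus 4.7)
The plan is to deduce this result as a direct application of Theorem \ref{th5.3}. The conditions (i) and (ii) of Theorem \ref{th5.3} are assumed verbatim in the present statement, so all of the work goes into producing a bounded discrete $\alpha$-resolvent family with good compactness properties and verifying condition (iii) of Theorem \ref{th5.3}, namely that for each $a\in\mathbb{N}_0$ and $\sigma>0$ the set $\{\mathcal{S}_\alpha(n)f(k,x):0\le k\le a,\,\|x\|\le\sigma\}$ is relatively compact in $X$ for every $n\in\mathbb{N}_0$.

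First, I would invoke Example \ref{Ex4.7}: since $A$ generates a bounded sine family $(S(t))_{t>0}$ and $1<\alpha\le 2$, the L\'evy subordination produces a bounded $\alpha$-resolvent family $(S_\alpha(t))_{t>0}\subset\mathcal{B}(X)$ (the case $\alpha=2$ being trivial since $S_2=S$). Applying Theorem \ref{th5.66} to $(S_\alpha(t))_{t>0}$ then yields the bounded discrete $\alpha$-resolvent family $\mathcal{S}_\alpha(n):=\mathcal{P}(S_\alpha)(n)$, which is the sequence of operators that Theorem \ref{th5.3} requires.

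The core step is to show that every $\mathcal{S}_\alpha(n)$ is a compact operator on $X$. The compactness of $(\lambda-A)^{-1}$ for one (hence for every) $\lambda\in\rho(A)$ implies compactness of the sine family $S(t)$ for each $t>0$, via the standard argument combining compactness of the resolvent with the norm-continuity of $S(t)$ on $(0,\infty)$ and the resolvent representation. Then the subordination formula
\[
S_\alpha(t)x=\int_0^\infty\psi_{\alpha/2,0}(t,s)S(s)x\,ds,\qquad t>0,\ x\in X,
\]
expresses $S_\alpha(t)$ as a Bochner integral of compact operators with integrable scalar density, so $S_\alpha(t)$ is compact for each $t>0$; combined with the uniform boundedness from Example \ref{Ex4.7}, the family $\{S_\alpha(t)\}_{t\ge 0}$ satisfies the hypotheses of Theorem \ref{th3.2}(vii), and we conclude that $\mathcal{S}_\alpha(n)=\mathcal{P}(S_\alpha)(n)$ is compact for every $n\in\mathbb{N}_0$.

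With compactness of $\mathcal{S}_\alpha(n)$ in hand, hypothesis (i) ensures $\|f(k,x)\|\le \|M\|_\infty\,W(\|x\|)\le \|M\|_\infty C\sigma$ for $0\le k\le a$ and $\|x\|\le\sigma$, so the set $B=\{f(k,x):0\le k\le a,\,\|x\|\le\sigma\}$ is bounded in $X$. Since $\mathcal{S}_\alpha(n)$ is compact, $\mathcal{S}_\alpha(n)(B)$ is relatively compact, which is exactly condition (iii) of Theorem \ref{th5.3}. Theorem \ref{th5.3} then delivers the unique solution $u\in l_h^\infty(\mathbb{N}_0;X)$ of \eqref{eq5.1}. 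The principal technical obstacle in this scheme is the transfer of compactness from $(\lambda-A)^{-1}$ to the sine family $S(t)$ and then, via subordination, to $S_\alpha(t)$; once that is in place, Theorem \ref{th3.2}(vii) and Theorem \ref{th5.3} do the rest.
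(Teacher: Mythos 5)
Your proposal is correct and follows the same overall strategy as the paper: reduce everything to condition (iii) of Theorem \ref{th5.3}, obtain the bounded discrete family $\mathcal{S}_\alpha(n)=\mathcal{P}(S_\alpha)(n)$ from Example \ref{Ex4.7} and Theorem \ref{th5.66}, establish compactness of each $\mathcal{S}_\alpha(n)$ via Theorem \ref{th3.2}(vii), and then observe that a compact operator maps the bounded set $\{f(k,x):0\le k\le a,\ \|x\|\le\sigma\}$ (bounded by hypothesis (i)) onto a relatively compact set. The one place where you genuinely diverge is the compactness of $(S_\alpha(t))_{t>0}$: the paper disposes of this in a single stroke by citing \cite[Corollary 2.3]{We} applied directly to the $\alpha$-resolvent family, whereas you first transfer compactness from $(\lambda-A)^{-1}$ to the sine family $S(t)$ (the Travis--Webb-type characterization, which is legitimate here because $S(t)=\int_0^t C(s)\,ds$ is automatically norm-continuous) and then push it through the L\'evy subordination integral $S_\alpha(t)x=\int_0^\infty\psi_{\alpha/2,0}(t,s)S(s)x\,ds$, noting that the integral converges in operator norm (integrable scalar density times a bounded, norm-continuous compact-valued integrand), so $S_\alpha(t)$ is a norm limit of finite combinations of compact operators. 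Your route is slightly longer but more self-contained: it makes explicit exactly where the compactness enters and avoids leaning on the external perturbation result for the intermediate family; the paper's route is shorter but leaves the reader to check that the cited corollary applies to $\alpha$-resolvent families. Both arguments are sound, and both correctly handle $\alpha=2$ as the degenerate case $S_2=S$.
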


\begin{proof} To prove this result we only have to check that the assumption (iii) in Theorem \ref{th5.3} is satisfied. Indeed,
by hypothesis we have that $(\lambda^\alpha-A)^{-1}$ is compact for all $\lambda^{\alpha} \in \rho(A) $ and all $1<\alpha \leq 2.$ By Example \ref{Ex4.7} we obtain that $A$ is the generator of  a bounded $\alpha$-resolvent family  $(S_\alpha(t))_{t>0},$ which is moreover compact by  \cite[Corollary 2.3]{We}. From Theorem \ref{th3.2} (vii) it follows that $\{\mathcal{S}_\alpha(n)\}_{n\in \mathbb{N}_0}$ is compact. Also, for all $a\in\mathbb{N}_0$ and $\sigma>0$, the set $\{f(k,x):0\le k\le a,\|x\|\le\sigma\}$ is bounded because $\|f(k,x)\|\le M(k)C\|x\|\le MC\sigma$ for all $0\le k\le a$ and $\|x\|\le\sigma$. Consequently, the set $\{\mathcal{S}_\alpha(n)f(k,x):0\le k\le a,\|x\|\le\sigma\}$ is relatively compact in $X$ for all $n\in\mathbb{N}_0$.
\end{proof}

\section{Examples, applications and final comments}

In this section, we provide several concrete examples and applications of the abstract results developed in the previous sections. Finally we present some related problems with problem  \eqref{eq1} for $\alpha=2$. 

\begin{example} Let $m:[a,b] \to (0,1)$ be a continuous function. Let $A$ be the multiplication operator given by $Af(x)=m(x)f(x)$ defined on $L^2(a,b).$ We known that $A$ is a bounded operator \cite[Proposition 4.10, Chapter I]{En-Na00}. Since $(\lambda^2-A)^{-1} = \frac{1}{\lambda^2 -m(x)}$ for $\lambda $ sufficiently large, we have by Definition \ref{def22} that $A$ generates a sine family $(S(t))_{t>0}$ on $L^2(a,b)$, given by
$$ S(t)f(x)=\frac{1}{2\sqrt{m(x)}}\left(e^{\sqrt{m(x)}t}-e^{-\sqrt{m(x)}t}\right)f(x), \qquad t>0. $$
Since $0<m(x)<1$ we obtain by subordination
\begin{eqnarray*}
\mathcal{S}(n)f(x) &=& \int_0^\infty p_n(t)S(t)f(x)dt\\
&=& \int_0^\infty e^{-t}\frac{t^n}{n!}\frac{1}{2\sqrt{m(x)}}\left(e^{\sqrt{m(x)}t}-e^{-\sqrt{m(x)}t}\right)f(x)dt\\
&=& \frac{1}{ n! 2\sqrt{m(x)}}\left(\int_0^\infty t^ne^{-(1-\sqrt{m(x)})t}f(x)dt-\int_0^\infty t^ne^{-(1+\sqrt{m(x)})t}f(x)dt\right)\\
&=& \frac{1}{2\sqrt{m(x)}}\left(\frac{1}{(1-\sqrt{m(x)})^{n+1}}-\frac{1}{(1+\sqrt{m(x)})^{n+1}}\right)f(x),
\end{eqnarray*}
for $n\in {n\in \mathbb{N}_0}$. By Theorem \ref{th5.6} and Theorem \ref{th5.66}, we conclude that the fractional difference equation
$$ \Delta^{2} u(n) =  Au(n+2), \quad n \in \mathbb{N}_0, $$
with initial conditions $u(0)=u_0$ and $u(1)=u_1$, admits the explicit  solution
\begin{eqnarray*}
u(n) &=& (\mathcal{S}(n)(I-A)-2\mathcal{S}(n-1))u_0 + \mathcal{S}(n-1)(I-A)u_1\\
&=& \sqrt{A^{-1}} \left(\frac{I-A}{2}(1-\sqrt{A})^{-(n+1)}-\frac{I-A}{2}(1+\sqrt{A})^{-(n+1)}\right)u_0 \\ && - \sqrt{A^{-1}} \left((1-\sqrt{A})^{-n} -(1+\sqrt{A})^{-n}\right)u_0 \\
&&+ \frac{1}{2}\sqrt{A^{-1}} \left((1-\sqrt{A})^{-n}- (1+\sqrt{A})^{-n}\right)(I-A)u_1 ,\quad n\in\mathbb{N}_0.
\end{eqnarray*}
\end{example}

\begin{example} We study the existence of solutions for the problem
\begin{equation}\label{eq5.3}
\left\{%
\begin{array}{rll}
    &\Delta^{\alpha} u(n,x) =  u_{xx}(n+2,x) + \displaystyle\frac{\sin{n}}{1+n^3}\frac{u(n,x)}{1+ \Big(\displaystyle \int_0^{\pi}|u(n,s)|^2ds \Big)^{1/2}},  \quad n \in \mathbb{N}_0, \quad 0<x<\pi;  \\
    &u(0,x) =0 ;\quad u(1,x)  =0;\\ &u(n,0) =0 ;\quad u(n,\pi)  =0;
\end{array}%
\right.
\end{equation}
for $1<\alpha<2$. We will use Corollary \ref{cor5.4}.\\

Let $X=L^2[0,\pi]$ and let us define the operator $A=\displaystyle\frac{\partial^2}{\partial x^2}$,  on the domain
$$D(A)=\{v\in L^2[0,\pi] / v,v' \textrm{ absolutely continuous, } v''\in L^2[0,\pi], v(0)=v(\pi)=0\}.$$
Observe that the operator $A$ can be written as
$$ Av=-\sum_{n=1}^\infty n^2(v,z_n)z_n, \quad v\in D(A), $$
where $z_n(s)=\sqrt{2/\pi}\sin{ns}$, $n=1,2,\dots$, is an orthonormal set of eigenvectors of $A$.\\
Note that $A$ is the infinitesimal generator of a sine family $S(t)$, $t\in\mathbb{R}$, in $L^2[0,\pi]$, given by
$$ S(t)v=\sum_{n=1}^\infty \frac{\sin{nt}}{n}(v,z_n)z_n, \quad v\in L^2[0,\pi]. $$
The resolvent of $A$ is given by
$$ R(\lambda; A)v=\sum_{n=1}^\infty \frac{1}{\lambda+n^2}(v,z_n)z_n, \quad v\in L^2[0,\pi], -\lambda\neq k^2, k\in\mathbb{N}. $$
The compactness of $R(\lambda; A)$ follows from the fact that eigenvalues of $R(\lambda; A)$ are $\lambda_n=\frac{1}{\lambda+n^2}$, $n=1,2,\dots$, and thus $\displaystyle\lim_{n\to\infty}\lambda_n=0$, see for example \cite{TW}.\\

Let us consider the weighted space
$$ l_h^\infty(\mathbb{N}_0; L^2[0,\pi])=\left\{\xi:\mathbb{N}_0\to L^2[0,\pi]/\sup_{n\in\mathbb{N}}\frac{\|\xi(n)\|_{L^2}}{nn!}<\infty \right\}, $$
where the function $h(n)=nn!$ is an admissible weight function (see Example \ref{example5.3}).

For the function $f:\mathbb{N}_0\times L^2[0,\pi]\to L^2[0,\pi]$, defined by $f(n,v):=\displaystyle\frac{\sin{n}}{1+n^3}\frac{v}{1+\|v\|}$, we consider the Nemystkii operator $\mathcal{N}_f$ associated to $f.$ That is, $N_f(u):\mathbb{N}_0\to L^2[0,\pi]$ is such that $N_f(u)(n):=f(n,u(n))$ for $u:\mathbb{N}_0\to L^2[0,\pi].$ Then:
\begin{enumerate}
\item[(i)] There exists $M(n)=\frac{1}{1+n^3}$ in $l^{\infty}(\mathbb{N}_0)$ and $W(t):=  \frac{t}{1+t}$ such that $\|f(n,v)\|\le M(n) W(\|v\|) $, for all $n\in\mathbb{N}_0$ and $v\in L^2[0,\pi]$.\\
\item[(ii)] For $u_1,u_2\in l_h^\infty(\mathbb{N}_0; L^2[0,\pi])$ and each $n\in\mathbb{N}_0$, we have
\begin{eqnarray*}
\|N_f(u_1)(n)-N_f(u_2)(n)\| &\le& \|\frac{u_1(n)}{1+\|u_1(n)\|}-\frac{u_2(n)}{1+\|u_2(n)\|}\|\\
&=& \|\frac{(u_1(n)-u_2(n))(1+\|u_2(n)\|)+u_2(n)(\|u_2(n)\|-\|u_1(n)\|)\|}{(1+\|u_1(n)\|)(1+\|u_2(n)\|)}\|\\
&\le& \frac{\|u_1(n)-u_2(n)\|}{1+\|u_1(n)\|}+\frac{\|u_2(n)\|\|u_1(n)-u_2(n)\|}{(1+\|u_1(n)\|)(1+\|u_2(n)\|)}\\
&\le& \frac{2}{1+\|u_1(n)\|}\|u_1(n)-u_2(n)\| \le 2\|u_1(n)-u_2(n)\|.
\end{eqnarray*}
\end{enumerate}
Consequently, by Corollary \ref{cor5.4}, we conclude that the problem \eqref{eq5.3} has an unique solution $u\in l_h^\infty(\mathbb{N}_0)$, that is, $u$ satisfies
$$\sup_{n\in\mathbb{N}_0} \frac{\|u(n)\|_{L^2}}{nn!}=\sup_{n\in\mathbb{N}_0} \frac{1}{nn!}\left(\int_0^{\pi}|u(n,x)|^2dx\right)^{1/2}<\infty,$$
Therefore, there exist a constant $K>0$ such that
$$\int_0^{\pi}|u(n,x)|^2dx<K(nn!)^2, \quad n\in\mathbb{N}.$$
\end{example}

\subsection*{Final comments}

In some circumstances, the equation \eqref{eq1} for $\alpha=2$ may have  a different format on the right hand side. For instance, the problem
\begin{equation}\label{eq6.2}
\left\{%
\begin{array}{rll}
    \Delta^{2} u(n) &=  Bu(n+1)+g(n,u(n)),  \quad n \in \mathbb{N}_0;  \\
    u(0) &=u_0 ;\\ u(1) & =u_1.
\end{array}%
\right.
\end{equation}
where $B$ is a linear operator defined on a Banach space $X.$ In such cases, and under mild conditions, we can still handle this problem with our theory. That is the content of the following two results.

\begin{prop}\label{propbat} Let $B$ be a linear operator defined on a Banach space $X$, such that $-2\in\rho(B)$. Then, \eqref{eq6.2} is equivalent to the problem
\begin{equation}\label{eq6.3}
\left\{%
\begin{array}{rll}
    \Delta^{2} u(n) &=  Tu(n+2)+Tu(n)+(I-T)g(n,u(n)),  \quad n \in \mathbb{N}_0;  \\
    u(0) &=u_0 ;\\ u(1) & =u_1.
\end{array}%
\right.
\end{equation}
where $T=I-2(2+B)^{-1}.$
\end{prop}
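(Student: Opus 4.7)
The plan is a direct algebraic manipulation in two steps, using the fact that $-2\in\rho(B)$ guarantees that $(2+B)^{-1}$ is an everywhere-defined bounded operator on $X$, so the operator $T=I-2(2+B)^{-1}$ belongs to $\mathcal{B}(X)$ and satisfies the key identity $I-T=2(2+B)^{-1}$. Since the initial conditions $u(0)=u_0,\,u(1)=u_1$ are the same in both problems, it suffices to establish the equivalence of the two recurrence relations pointwise in $n\in\mathbb{N}_0$.

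First I would expand $\Delta^{2}u(n)=u(n+2)-2u(n+1)+u(n)$ and rewrite \eqref{eq6.2} in the equivalent form
$$u(n+2)+u(n)-g(n,u(n)) \;=\; (2+B)\,u(n+1),$$
which uses only that $u(n+1)\in D(B)$ (a consequence of $u$ being a solution). Applying the bounded operator $(2+B)^{-1}$ on the left and using $(2+B)^{-1}=\tfrac12(I-T)$, this is equivalent to
$$\tfrac12(I-T)\bigl[u(n+2)+u(n)-g(n,u(n))\bigr] \;=\; u(n+1),$$
and expanding $I-T$ and rearranging yields
$$u(n+2)-2u(n+1)+u(n) \;=\; Tu(n+2)+Tu(n)+(I-T)g(n,u(n)),$$
which is precisely \eqref{eq6.3}.

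For the converse direction, I would start from \eqref{eq6.3} in the form
$$(I-T)\bigl[u(n+2)+u(n)-g(n,u(n))\bigr]\;=\;2u(n+1).$$
The right-hand side lies automatically in the range of $I-T=2(2+B)^{-1}$, which equals $D(B)$; in particular $u(n+1)\in D(B)$, so the expression $(2+B)u(n+1)$ makes sense. Applying $(2+B)=2(I-T)^{-1}$ (well-defined on this range) and rearranging recovers \eqref{eq6.2}.

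The argument is essentially bookkeeping; the only genuinely delicate point is to verify that the domain condition $u(n+1)\in D(B)$, which is part of the notion of solution for \eqref{eq6.2}, is automatically satisfied for any solution of \eqref{eq6.3}. This falls out immediately from the identity $I-T=2(2+B)^{-1}$, since the range of $I-T$ coincides with $D(B)$.
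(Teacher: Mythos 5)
Your proposal is correct and follows essentially the same route as the paper: both isolate $u(n+1)$, apply $(2+B)^{-1}=\tfrac12(I-T)$, and rearrange. You are somewhat more careful than the paper about the converse direction and the domain point that $u(n+1)\in D(B)$ follows from $\mathrm{Ran}(I-T)=\mathrm{Ran}\bigl(2(2+B)^{-1}\bigr)=D(B)$, which is a welcome (if minor) addition rather than a different argument.
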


\begin{proof}
From the definition
\begin{eqnarray*}
    \Delta^2u(n)=u(n+2)-2u(n+1)+u(n),
\end{eqnarray*}
we obtain
\begin{equation*}
    u(n+1)=\frac{1}{2}(u(n+2)-\Delta^2u(n)+u(n)).
\end{equation*}
On the other hand, by \eqref{eq6.2} we have
\begin{equation*}
    u(n+2)-2u(n+1)+u(n)=Bu(n+1)+g(n,u(n))
\end{equation*}
that is, for $-2\in\rho(B)$ we have
\begin{equation*}
    u(n+1)=(2+B)^{-1}u(n+2)+(2+B)^{-1}u(n)-(2+B)^{-1}g(n,u(n)).
\end{equation*}
By identifying both expressions for $u(n+1)$, we achieve
\begin{equation*}
    (2+B)^{-1}u(n+2)+(2+B)^{-1}u(n)-(2+B)^{-1}g(n,u(n))=\frac{1}{2}(u(n+2)-\Delta^2u(n)+u(n)),
\end{equation*}
and therefore
\begin{equation*}
    \Delta^2u(n)=(I-2(2+B)^{-1})u(n+2)+(I-2(2+B)^{-1})u(n)+2(2+B)^{-1}g(n,u(n)).
\end{equation*}
So, assuming $-2\in\rho(B)$, the original problem \eqref{eq6.2} is equivalent to the problem \eqref{eq6.3}, with $T=I-2(2+B)^{-1}.$ \end{proof}

Observe that the operator $T$ in the above proposition is bounded whenever $B$ is a closed linear operator and $-2\in \rho(B).$ A second case of interest is the following.

\begin{prop}\label{propn} Let $B$ be a linear operator defined on a Banach space $X$, such that $1\in\rho(B)$. Then, the problem
\begin{equation}\label{eqn}
\left\{%
\begin{array}{rll}
    \Delta^{2} u(n) &=  Bu(n)+g(n+1,u(n+1)),  \quad n \in \mathbb{N}_0;  \\
    u(0) &=u_0 ;\\ u(1) & =u_1.
\end{array}%
\right.
\end{equation}
is equivalent to the problem
\begin{equation}\label{eq6.3bis}
\left\{%
\begin{array}{rll}
    \Delta^{2} u(n) &=  Tu(n+2)-2Tu(n+1)+(I-T)g(n+1,u(n+1)),  \quad n \in \mathbb{N}_0;  \\
    u(0) &=u_0 ;\\ u(1) & =u_1.
\end{array}%
\right.
\end{equation}
where $T=I-(I-B)^{-1}.$
\end{prop}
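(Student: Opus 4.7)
My plan is to mirror the strategy of Proposition~\ref{propbat}: manipulate the identity $\Delta^{2}u(n)=u(n+2)-2u(n+1)+u(n)$ together with the original equation to express $u(n)$ via the shifted terms, and then substitute back into $\Delta^{2}u(n)$. The key algebraic fact to keep at hand is that $T=I-(I-B)^{-1}$ is equivalent to $I-T=(I-B)^{-1}$, which is well defined precisely because $1\in\rho(B)$.

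Concretely, first I would rewrite \eqref{eqn} as
\begin{equation*}
(I-B)u(n) = 2u(n+1)-u(n+2)+g(n+1,u(n+1)),\qquad n\in\mathbb{N}_0,
\end{equation*}
which uses only $\Delta^{2}u(n)=u(n+2)-2u(n+1)+u(n)$ and a rearrangement of \eqref{eqn}. Since $1\in\rho(B)$, applying $(I-B)^{-1}=I-T$ to both sides yields the explicit formula
\begin{equation*}
u(n) = 2(I-T)u(n+1)-(I-T)u(n+2)+(I-T)g(n+1,u(n+1)).
\end{equation*}
Substituting this expression for $u(n)$ back into $\Delta^{2}u(n)=u(n+2)-2u(n+1)+u(n)$ and collecting coefficients of $u(n+2)$ and $u(n+1)$ gives exactly
\begin{equation*}
\Delta^{2}u(n)=Tu(n+2)-2Tu(n+1)+(I-T)g(n+1,u(n+1)),
\end{equation*}
which is \eqref{eq6.3bis}. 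The initial conditions are untouched by this manipulation.

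For the converse direction, starting from \eqref{eq6.3bis} I would rewrite it as
\begin{equation*}
(I-T)u(n+2)-2(I-T)u(n+1)+u(n)=(I-T)g(n+1,u(n+1)),
\end{equation*}
replace $I-T$ by $(I-B)^{-1}$, and then apply the bounded operator $(I-B)$ to both sides (this is legal since every term lies in the range of $(I-B)^{-1}$, hence in $D(B)$ if $B$ is unbounded). After distributing, the left-hand side becomes $u(n+2)-2u(n+1)+(I-B)u(n)=\Delta^{2}u(n)-Bu(n)$, so \eqref{eqn} is recovered.

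The computation is purely algebraic and the only subtlety is ensuring the operator $(I-B)^{-1}$ can be applied and then inverted; this is exactly what the hypothesis $1\in\rho(B)$ guarantees, so no real obstacle is expected. If $B$ is unbounded, I would briefly justify that the values $u(n)$ obtained via $(I-B)^{-1}$ lie in $D(B)$, which is automatic from $u(n)\in\mathrm{Ran}(I-B)^{-1}=D(B)$.
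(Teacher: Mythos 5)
Your proof is correct and follows essentially the same route as the paper: both rearrange \eqref{eqn} to $(I-B)u(n)=2u(n+1)-u(n+2)+g(n+1,u(n+1))$, apply $(I-B)^{-1}=I-T$ to isolate $u(n)$, and substitute back into $\Delta^{2}u(n)=u(n+2)-2u(n+1)+u(n)$; your explicit converse direction and the observation that $u(n)\in D(B)$ merely spell out what the paper leaves implicit in its chain of equivalences. The only nit is your phrase ``the bounded operator $(I-B)$'' --- when $B$ is unbounded it is $(I-B)^{-1}$ that is bounded, not $(I-B)$ --- but this does not affect the argument.
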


\begin{proof}
From the definition
\begin{eqnarray*}
    \Delta^2u(n)=u(n+2)-2u(n+1)+u(n),
\end{eqnarray*}
we obtain
\begin{equation*}
    u(n)=\Delta^2u(n)-u(n+2)+2u(n+1).
\end{equation*}
On the other hand, by \eqref{eqn} we have
\begin{equation*}
    u(n+2)-2u(n+1)+u(n)=Bu(n)+g(n+1,u(n+1))
\end{equation*}
that is, for $1\in\rho(B)$ we have
\begin{equation*}
    u(n)=-(I-B)^{-1}u(n+2)+2(I-B)^{-1}u(n+1)+(I-B)^{-1}g(n+1,u(n+1)).
\end{equation*}
By identifying both expressions for $u(n)$, we achieve
\begin{equation*}
    (I-B)^{-1}\left(-u(n+2)+2u(n+1)+g(n+1,u(n+1))\right)=\Delta^2u(n)-u(n+2)+2u(n+1),
\end{equation*}
and therefore
\begin{equation*}
    \Delta^2u(n)=(I-(I-B)^{-1})u(n+2)-2(I-(I-B)^{-1})u(n+1)+(I-B)^{-1}g(n+1,u(n+1)).
\end{equation*}
So, assuming $1\in\rho(B)$, the problem \eqref{eqn} is equivalent to the problem \eqref{eq6.3bis}, with $T=I-(I-B)^{-1}.$
\end{proof}

For instance, let $B$ be a linear operator defined on a Banach space $X$, and $\gamma$ a positive constant. We study the existence of solutions of the problem
\begin{equation}\label{bat}
\left\{%
\begin{array}{rll}
    &&\Delta^{2} u(n,x) =  (B+2\gamma)u(n+1,x),  \quad n \in \mathbb{N}_0, x\in [a,b];  \\
    &&u(0,x) =0 ;\quad u(1,x) =0,\\
    &&u(n,a) =0 ;\quad u(n,b) =0.
\end{array}%
\right.
\end{equation}
By Proposition \ref{propbat} the solution of \eqref{bat} coincides with the solution of the problem \begin{equation}\label{batb}
\left\{%
\begin{array}{rll}
    &&\Delta^{2} u(n,x) =  Tu(n+2,x)+Tu(n,x),  \quad n \in \mathbb{N}_0, x\in [a,b];  \\
    &&u(0,x) =0 ;\quad u(1,x) =0,\\
    &&u(n,a) =0 ;\quad u(n,b) =0.
\end{array}%
\right.
\end{equation}
 where $T=I-2(2(1+\gamma)+B)^{-1},$ provided that $2+2\gamma \in\rho(-B).$

 As an example of application to Theorem \ref{thlip} with $\alpha=2$, let us consider $X=L^2(\pi,2\pi)$ and define  $$
 Bf(x)= 2\Big( \frac{1}{1+x}- (1+\gamma) \Big)f(x), \quad x \in [\pi, 2\pi].
 $$
 Note that $B$ is bounded. A computation shows that $Tf(x)= -xf(x)$ and therefore generates the sine family
 $$
 S(t)f(x)= \frac{\sin(\sqrt{x}t)}{\sqrt{x}}f(x), \quad x \in [\pi, 2\pi]
 $$
It follows that $ \| T \| \leq 2\pi$ and $
\|S\|_\infty \leq \sqrt{\pi}. $

Let $h$ the admissible weight function defined by $h(n)=nn!$, for which we have $$\sup_{n \in \mathbb{N}_0} \frac{1}{h(n)} \sum_{k=0}^{n-2}h(k)=\frac{1}{18}$$ since $\displaystyle\frac{1}{h(n)}\sum_{k=0}^{n-2}h(k)$ is a decreasing sequence for $n\ge3.$ Let us consider the function $f:\mathbb{N}_0\times X\to X$ defined by $f(n,\xi)=T\xi$. Then $f(n,0)=0$ for all $n\in\mathbb{N}_0$ and the function $f$ verifies:
\begin{enumerate}
\item[(L)] There exists $L:= \| T \|$ such that
$$\|f(n,x)-f(n,y)\|\le \|T\|\|x-y\|,$$
\end{enumerate}
for all $x,y \in X.$ Moreover,
$$
\displaystyle\|T\|\|S\|_\infty\frac{1}{18} < \frac{2 \pi \sqrt{\pi}}{18} <1.
$$
Therefore,  by Theorem \ref{thlip} we conclude  that the problem \eqref{batb} has an unique solution  or, equivalently, the problem \eqref{bat} has an unique solution $u\in l_h^\infty(\mathbb{N}_0;X)$.


\begin{thebibliography}{99}

\bibitem{AL} L. Abadias and C. Lizama. {\it Almost automorphic mild solutions to fractional partial difference-differential equations.} Applicable Analysis, to appear.

\bibitem{AM} L. Abadias and P. J. Miana. {\it A subordination principle on Wright functions and regularized resolvent families.} J. Funct. Spaces, Art. ID 158145, (2015), 9 pp.




\bibitem{ACL14} R.P. Agarwal, C. Cuevas and C. Lizama, \emph{Regularity of Difference Equations on Banach Spaces}, Springer-Verlag, 2014.




\bibitem{Ab-At12} T. Abdeljawad and  F.M. Atici. \emph{ On the
definitions of nabla fractional operators.} Abstract and Applied
Analysis, volume 2012, (2012), 1-13.
doi:10.1155/2012/406757.





\bibitem{ABHN01} {W. Arendt, C. Batty, M. Hieber \and F.
Neubrander.} Vector-valued Laplace Transforms and Cauchy Problems.
Monographs in Mathematics.  vol. 96. Birkh\"auser, Basel, 2001.



\bibitem{AE2007} F. M. Atici and P. W. Eloe. \emph{A transform method in discrete fractional calculus.} Int. J.
Difference Equ. 2 (2) (2007),  165--176.



\bibitem{AE2009b} F. M. Atici and P. W. Eloe. \emph{Discrete fractional calculus with the nabla
operator.} Electronic Journal of Qualitative Theory of
Differential Equations, 3 (2009), 1-12.

\bibitem{AS2010} F.M. Atici and S. Seng\"ul. \emph{Modeling with fractional difference equations.} J. Math. Anal.  Appl., 369 (2010), 1-9.

\bibitem{AE2011} F.M.  Atici and P.W.  Eloe. {\it  Linear systems of fractional nabla difference equations.} Rocky Mountain J. Math. 41 (2) (2011), 353--370.



\bibitem{AtEl11} F. M.  Atici and P. Eloe. {\it Two-point boundary value problems for finite fractional difference equations.} J. Differ. Equ. Appl. 17 (2011), 445-456.

\bibitem{Ba01}  E. Bazhlekova.  {\it Fractional Evolution Equations in
Banach Spaces.} Ph.D. Thesis, Eindhoven University of Technology,
2001.

\bibitem{Ba-Di-Sc12} D. Baleanu, K. Diethelm, E. Scalas and J.J. Trujillo, {\it Fractional Calculus: Models and Numerical Methods.} World Scientific, 2012.



\bibitem{Ba43} H. Bateman, {\it  Some simple differential difference equations and the related functions. } Bull. Amer. Math. Soc. 49 (1943), 494--512.










\bibitem{CeKiNe12} J. Cerm\'ak, T. Kisela, and L. Nechv\'atal.\emph{ Stability and asymptotic properties of a linear fractional difference equation.} Adv. Difference Equ. 122 (2012), 1-14.

\bibitem{CeKiNe13} J. Cerm\'ak, T. Kisela, and L. Nechv\'atal. \emph{Stability regions for linear fractional differential systems and their discretizations.} Appl. Math. Comput. 219 (12) (2013), 7012--7022.



\bibitem{Ch-He-Mo} R.E. Chandler, R. Herman, E.W. Montroll, {\it Traffic dynamics: studies in car following.} Operations Research (1958), 165--184.

\bibitem{Cu-Lu-Pa06} E. Cuesta, C.  Lubich and C. Palencia. {\it Convolution quadrature time discretization of fractional diffusion-wave equations.} Math. Comp. 75 (254) (2006), 673--696.


\bibitem{Cu07} E.  Cuesta. {\it Asymptotic behaviour of the solutions of fractional integro-differential equations and some time discretizations.} Discrete Contin. Dyn. Syst. 2007, Dynamical Systems and Differential Equations. Proceedings of the 6th AIMS International Conference, suppl., 277--285.

\bibitem{Da-Ba-Ka14} I.K. Dassios, D.I. Baleanu and G.I. Kalogeropoulos. {\it On non-homogeneous singular systems of fractional nabla difference equations.} Appl. Math. Comput. 227 (2014), 112--131.



\bibitem{En-Na00} K.J. Engel and R. Nagel. \emph{ One-parameter semigroups for linear evolution equations.}  Graduate Texts in Mathematics, 194. Springer-Verlag, New York, 2000.















\bibitem{KLW13} V. Keyantuo, C. Lizama  and M. Warma. \emph{Spectral criteria for solvability of boundary value problems and positivity of solutions of time-fractional differential equations.} Abstract and Applied Analysis, vol. 2013,  (2013) 1-11. doi:10.1155/2013/614328.


\bibitem{Li14} C. Lizama. {\it The Poisson distribution, abstract fractional difference equations, and stability.} Submitted.

\bibitem{Li15} C. Lizama. {\it $l_p$-maximal regularity for fractional difference equations on UMD spaces.} Math. Nach., to appear.


\bibitem{Lu-Sl-Th96} Ch. Lubich, I. H. Sloan, and V. Thom\'ee. {\it  Nonsmooth data error estimates for approximations
of an evolution equation with a positive type memory term.} Math. Comp., 65 (1996), 1--17.



\bibitem{MR1989} K. S. Miller and B. Ross. \emph{Fractional difference calculus.} In: Univalent functions, fractional
calculus, and their applications (K\"oriyama,
1988), Horwood, Chichester,  (1989), 139--152.

\bibitem{Mi93}  M. Misawa.\emph{A Harnack inequality for solutions of difference differential equations of elliptic-parabolic type.} Math. Z. 213 (3) (1993),  393--423.


\bibitem{pbm} A.P. Prudnikov, Yu.A. Brychkov and O.I. Marichev. \emph{Integrals and series. Volume 1.} Gordon and Breach, 1986.

\bibitem{Sanz88} J. M. Sanz-Serna, {\it A numerical method for a partial integro-differential equation.} SIAM J.
Numer. Anal., 25 (1988),  319--327.






\bibitem{Sinclair} A. M. Sinclair: { \it Continuous semigroups in Banach algebras,} London Mathematical Society Lecture Note Series. {\bf 63,} Cambridge University Press (1982).

\bibitem{TW} C.C. Travis and G.F. Webb. \emph{Compactness, regularity, and uniform continuity properties of strongly continuous cosine families.} Houston Journal of Mathematics 3 (4) (1977) 555-567.


\bibitem{We} L.W. Weis. \emph{A generalization of the Vidav-Jorgens perturbation theorem for semigroups and its application to Transport Theory.} J.  Math. Anal. Appl. 129 (1988), 6-23.





\bibitem{Zygmund} A. Zygmund. {\it Trigonometric Series.} 2nd ed. Vols. I, II, Cambridge University
Press, New York, 1959.

\end{thebibliography}
\end{document}